\DeclareFontFamily{OT1}{pzc}{}
\DeclareFontShape{OT1}{pzc}{m}{it}%
              {<-> s * pzcmi8t}{}
\DeclareMathAlphabet{\mathpzc}{OT1}{pzc}%
                                {m}{it}
\newtheorem{Thm}{Theorem}[section]
\newtheorem{Lem}[Thm]{Lemma}
\newtheorem{Cor}[Thm]{Corollary}
\newtheorem{Prop}[Thm]{Proposition}
\newtheorem{Rmk}[Thm]{Remark}
\newcommand{\R}{\mathbb{R}}
\newcommand{\eps}{\varepsilon}
\newcommand{\weak}{\rightharpoonup}
\newcommand{\h}{{\cal H}}
\newcommand{\C}{{\cal C}}
\newcommand{\D}{{\cal D}}
\newcommand{\rr}{{\cal R}}
\newcommand{\twzy}{(tw_*+y)}
\newcommand{\twyz}{(t_*w_*+y_*)}
\newcommand{\y}{y}
\newcommand{\cc}{c}
\newcommand{\w}{w}
\newcommand{\tG}{\tilde{G}}
\newcommand{\tz}{\hat{t}}
\newcommand{\sss}{s}
\newcommand{\cp}{c_{*,\lambda_1}^+}
\newcommand{\cm}{c_{*,\lambda_1}^-}
\newcommand{\csp}{c_*^+}
\newcommand{\csm}{c_*^-}
\newcommand{\usp}{u_*^+}
\newcommand{\usm}{u_*^-}
\newcommand{\dlmu}{(\Delta+\lambda_1)^{-1}}
\newcommand{\clp}{c_{\lambda_1}^+}
\newcommand{\clm}{c_{\lambda_1}^-}
\newcommand{\tsp}{t_*^+}
\newcommand{\tsm}{t_*^-}
\begin{document}
\begin{center}
{\Large\bf
Bifurcation curves of a diffusive logistic equation
with harvesting orthogonal to the first eigenfunction\footnote{2010
Mathematics Subject Classification:
35B32,  
35J66,  
37B30,  
92D25. 
\\
\indent Keywords: Bifurcation theory, Morse indices, logistic equation, degenerate solutions.
}}\\
\ \\
Pedro Martins Girão\footnote{Email: pgirao@math.ist.utl.pt.
Partially supported by the Fundação para a Ciência e a Tecnologia (Portugal) and by project
UTAustin/MAT/0035/2008.} and
Mayte Pérez-Llanos\footnote{Email: mayte@math.ist.utl.pt.
Partially supported by the Fundação para a Ciência e a Tecnologia (Portugal) and by project
MTM2008-06326-C02-02.}
\\

\vspace{2.2mm}

{\small Center for Mathematical Analysis, Geometry and Dynamical Systems,\\ Mathematics Department,\\ Instituto Superior Técnico,\\ 1049-001 Lisbon, Portugal}

\end{center}

\begin{center}
{\bf Abstract}
\end{center}
\noindent
We study the global bifurcation curves of a diffusive logistic equation, when harvesting is orthogonal to the first eigenfunction
of the Laplacian, for values of the linear growth up to $\lambda_2+\delta$,
examining in detail their behavior as the linear growth rate crosses the first two eigenvalues.
We observe some new behavior with regard to earlier works concerning this equation.
Namely, the bifurcation curves suffer a transformation at $\lambda_1$, they are
compact above $\lambda_1$, there are precisely two families of degenerate solutions with Morse index equal to zero,
and the whole set of solutions below $\lambda_2$ is not a two dimensional manifold.

\section{Introduction}

This paper concerns the study of logistic equations of the form
\begin{equation}\label{a}
-\Delta u=au-f(u)-ch,
\end{equation}
in a smooth bounded domain $\Omega\in\R^N$, with $N\geq 1$. We are interested in weak solutions belonging to the space
$$
\h=\{u\in W^{2,p}(\Omega): u=0 \; \rm{on}\; \partial\Omega\},
$$
for some fixed $p>N$.
Let $\lambda_1$ and $\lambda_2$ be the first and second eigenvalues of the Dirichlet Laplacian
on $\Omega$, respectively.
We denote by $\phi$ the first eigenfunction
satisfying $\max_\Omega\phi=1$.
We assume that $\lambda_2$ is simple, with eigenspace spanned by $\psi$, and
we also normalize the second eigenfunction
so $\max_\Omega\psi=1$.

The competition term $f$ is assumed to satisfy the following hypotheses:
\begin{enumerate}[{\bf (a)}]
 \item[{\bf (i)}] $f\in C^2(\R)$.
 \item[{\bf (ii)}] $f(u)=0$ for $u\leq M$, and $f(u)>0$ for $u>M$; throughout $M\geq 0$ is fixed.
 \item[{\bf (iii)}] $f''(u)\geq 0$.
 \item[{\bf (iv)}] $\lim_{u\to+\infty} \frac{f(u)}{u}=+\infty$.
\end{enumerate}

In \cite{OSS1}, the authors obtained global bifurcation curves, of positive solutions to (\ref{a}), for values of the parameter $a$ in a right neighborhood
of $\lambda_1$, when $f(u)=u^2$ and $h$ is a positive function.

In \cite{PG}, the first author generalized the results of \cite{OSS1} to competition terms satisfying {\bf (i)}-{\bf (iv)},
and studied the bifurcation curves, of sign changing solutions, for $a$ up to $\lambda_2+\delta$, for some $\delta>0$. This was also done under the assumption
that $h$ was positive a.e.\ in $\Omega$, a hypothesis which was used in the proof, although, as noted in \cite{PG},
in a right neighborhood of $\lambda_1$, one may relax the requirement on $h$ to
$\int_\Omega h\phi\,dx\neq0$.

In this paper, we analyze the situation when the harvesting function $h$, which in our case might be more appropriately
called harvesting and plantation function,
is orthogonal to the first eigenfunction of the Laplacian.
The biological interpretation gives our context but should be taken with care, as it breaks down in several circumstances,
for instance, if the solutions become negative.
Our motivation is mathematical, we are forced to provide new arguments, and we suspected the geometry of
the problem would be different from the one in \cite{PG}. Indeed, it turns out that the bifurcation curves suffer a complete transformation when
the parameter $a$ crosses the first eigenvalue. We examine in detail the way in which this change occurs.
When seen in the $(a,u,c)$ space, the set of solutions of (\ref{a}) between $\lambda_1$ and $\lambda_2$ has the shape of a piece of a paraboloid,
with a flat bottom at $a=\lambda_1$. A 2-dimensional space of solutions is attached to this bottom at $a=\lambda_1$, along a segment,
and lies in the region $a\leq\lambda_1$. The whole set of solutions below $\lambda_2$ is not a two-dimensional manifold. Therefore we find a
richer behavior regarding this equation than in the earlier works.
Also, in contrast to the bifurcation curves obtained in the previous papers, our
curves turn out to be compact above $\lambda_1$, and, instead of one,
we get two families of degenerate solutions with Morse index equal to zero above $\lambda_1$.

Specifically, we assume:
\begin{enumerate}[{\bf (a)}]
\item[{\bf (a)}] $h\in L^\infty(\Omega)$.
\item[{\bf (b)}] $\int_\Omega h\phi\,dx=0$.
\item[{\bf (c)}] $\int_\Omega h\psi\,dx\neq 0$.
\end{enumerate}
Hypothesis {\bf (c)} also appears in \cite{PG}. Our main results are Theorems~\ref{conv}, \ref{stab},
\ref{convl1}, \ref{convl1b}, \ref{thmatl2} and \ref{thm}. The proofs involve bifurcation methods (\cite{CR,CR2}),
a blow up argument, the Morse indices,
and a careful choice of coordinates at each step.
In particular, around $\lambda_1$ we decompose the space $\h$ as in \cite{CP}.
In the end, we obtain a complete picture of the set of solutions for the parameter $a$ up to $\lambda_2+\delta$.

For other works related to logistic equations with harvesting we refer the reader to \cite{CDT,Eu,OSS2}.

This paper is organized as follows: We treat successively the cases where the linear growth parameter $a$ is equal to $\lambda_1$ (Section~2),
below $\lambda_1$ (Section~3), between $\lambda_1$ and $\lambda_2$ (Section~4), and greater than or equal to $\lambda_2$ (Section~5).

\vspace{\baselineskip}

\noindent {\bf Acknowledgments.} This research was conducted during a stay of the second author at IST.
She is grateful for the pleasant atmosphere.

\section{Linear growth $a$ equal to $\lambda_1$}

Assume $(\lambda_1,u,c)\in\R\times{\cal H}\times\R$ is a solution of (\ref{a}) with $a=\lambda_1$.
Multiplying both sides of (\ref{a}) by $\phi$ and integrating, taking into account
{\bf (b)} and $-\Delta\phi=\lambda_1\phi$, we deduce that $\int f(u)\phi\,dx=0$.
When the region of integration is omitted it is understood to be $\Omega$.
Because $\phi$ is positive in $\Omega$
and $f(u)$ is continuous in $\Omega$, we get that $f(u)\equiv 0$. This means $u\leq M$ by {\bf (ii)}.
Therefore,
for $a=\lambda_1$ the solutions of (\ref{a}) are those of the linear problem
$$
 -\Delta u=\lambda_1 u-ch,
$$
i.e.\ are of the form $(\lambda_1,u,c)$, where
$
u=t\phi+c\dlmu h,
$
with
\begin{equation}\label{Lambda}
(t,c)\in\Lambda:=\{(t,c)\in\R^2:t\phi+c\dlmu h\leq M\}.
\end{equation}
Thus, there is a bijection between the set of solutions of (\ref{a}) for $a=\lambda_1$ and $\Lambda$,
given by $(\lambda_1,t\phi+c(\Delta+\lambda_1)^{-1}h,c)\leftrightarrow(t,c)$.
The set $\Lambda$ is closed and convex.

Let
\begin{equation}\label{T}
T:=\sup\{t:{\rm there\ exists}\ c\ {\rm such\ that}\ (t,c)\in\Lambda\}.
\end{equation}
Taking $c=0$ and using the normalization $\max_\Omega\phi=1$, $(M,0)\in\Lambda$ and so $T\geq M$. The value of $T$ is finite.
Indeed, $h$ and $(\Delta+\lambda_1)^{-1}h$ are orthogonal to $\phi$ and so $(\Delta+\lambda_1)^{-1}h$ changes sign.
Let $\Omega_+$ be the set where $(\Delta+\lambda_1)^{-1}h$ is positive and
let $\Omega_-$ be the set where $(\Delta+\lambda_1)^{-1}h$ is negative. Suppose that $c\geq 0$; then
$M-c(\Delta+\lambda_1)^{-1}h\leq M$ on $\Omega_+$, and so if $(t,c)\in\Lambda$, then $t\phi\leq M$ on $\Omega_+$. Suppose $c<0$; then
$M-c(\Delta+\lambda_1)^{-1}h\leq M$ on $\Omega_-$, and so if $(t,c)\in\Lambda$, then $t\phi\leq M$ on $\Omega_-$.
In any case, $c\geq 0$ or $c<0$, $t\phi\leq M$ either on $\Omega_+$ or on $\Omega_-$.
We conclude that $T<+\infty$ as asserted.
The value $T$ is a maximum.

To characterize parts of the boundary of $\Lambda$,
we define two functions, $\clm$ and $\clp$, in the interval $]-\infty,T]$, by
\begin{equation}\label{clambda1}
\clm(t):=\min_{(t,c)\in\Lambda}c\qquad{\rm and}\qquad\clp(t):=\max_{(t,c)\in\Lambda}c.
\end{equation}
Clearly, $c_{\lambda_1}^-(T)\leq c_{\lambda_1}^+(T)$.
Notice that
\begin{equation}\label{infinito}
 \lim_{t\to-\infty}\clm(t)=-\infty\qquad {\rm and}\qquad\lim_{t\to-\infty}\clp(t)=+\infty,
\end{equation}
since, when $t\to-\infty$, denoting by $\nu$ the unit outward normal to $\Omega$, using Hopf's Lemma, we have
$\frac{\partial}{\partial\nu}(M-t\phi)=-t\frac{\partial\phi}{\partial\nu}\geq
-t\max_{\partial\Omega}\frac{\partial\phi}{\partial\nu}\to-\infty$, and
$M-t\phi\to+\infty$ uniformly in each compact subset of $\Omega$.
Therefore, because $(\Delta+\lambda_1)^{-1}h$ belongs to $C^1(\overline\Omega)$, as $t$ goes to $-\infty$, it is possible to guarantee that
$c(\Delta+\lambda_1)^{-1}h\leq M-t\phi$ for larger and larger values of $|c|$. This establishes (\ref{infinito}).
From (\ref{infinito}) and the fact that $\Lambda$ is convex,
it follows that $\clm$ is convex, continuous and strictly increasing.
Similarly, $\clp$ is concave, continuous and strictly decreasing.

If $M=0$, then $T=0$ and $\clm(0)=\clp(0)=0$, because the function $\dlmu h$ changes sign.
One can check with specific examples, that it might happen that $T=M$.
In such a situation $\clm(0)\leq 0\leq\clp(0)$ because $(M,0)\in\Lambda$ and so
$0\geq c_{\lambda_1}^-(M)\geq c_{\lambda_1}^-(0)$,
$0\leq c_{\lambda_1}^+(M)\leq c_{\lambda_1}^+(0)$ (see Figure~\ref{fig10}).
On the other hand, if $T>M$, then either $\clm(M)=0$ or $\clp(M)=0$ (see Figure~\ref{fig11}).
Indeed, take $M<t<T$ and $(t,c)\in\Lambda$. We have that
$c\dlmu h\leq M-t\phi$. The function $M-t\phi$ is negative
in an open subset of $\Omega$. Therefore,
$\{c:(t,c)\in\Lambda\}\subset\R^-$ or $\{c:(t,c)\in\Lambda\}\subset\R^+$, as the function
$c\dlmu h$ cannot vanish at any point of that open subset of $\Omega$.
This shows that $\clm(t)>0$ or $\clp(t)<0$, for $M<t<T$.
Passing to the limit as $t\searrow M$, $\clm(M)\geq 0$ or $\clp(M)\leq 0$. On the other hand,
since $(M,0)\in\Lambda$, it holds that $\clm(M)\leq 0\leq \clp(M)$. Therefore,
$\clm(M)=0$ or $\clp(M)=0$, as claimed.
In any of the three possible cases, $M=T=0$, $0<M=T$ and $0<M<T$, we have that
\begin{equation}\label{cc}
\cm:=\clm(0)\leq 0\qquad{\rm and}\qquad
\cp:=\clp(0)\geq 0.
\end{equation}

\begin{figure}
\centering
\begin{psfrags}
\psfrag{c}{{\tiny $c$}}
\psfrag{t}{{\tiny $t$}}
\psfrag{T}{{\tiny $T$}}
\psfrag{P}{{\tiny $M=0$}}
\psfrag{Q}{{\tiny $T=M$}}
\includegraphics[scale=.5]{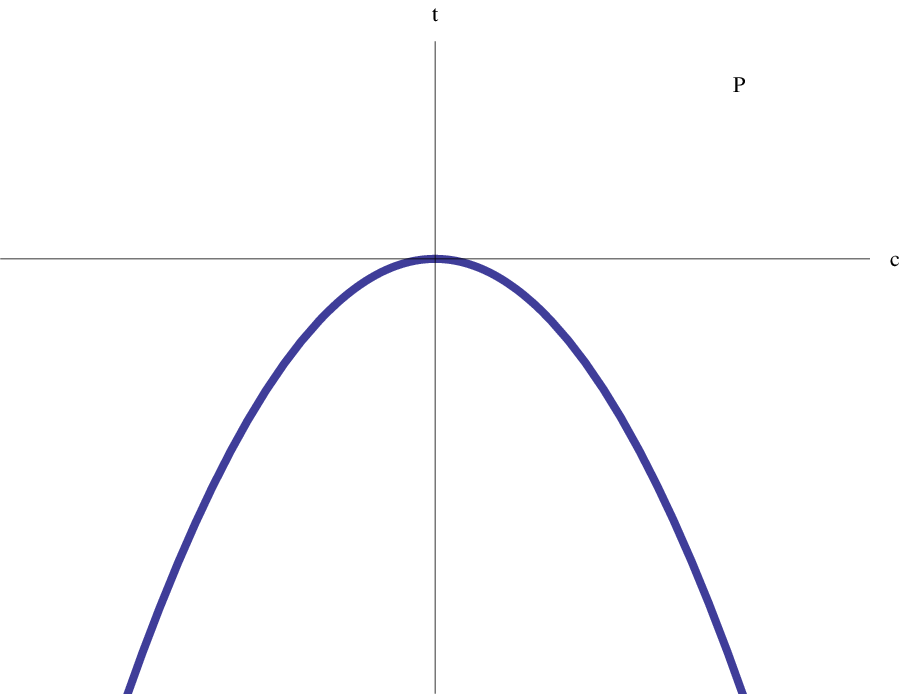}\ \ \ \ \ \ \ \ \ \ \ \ \ \ 
\includegraphics[scale=.5]{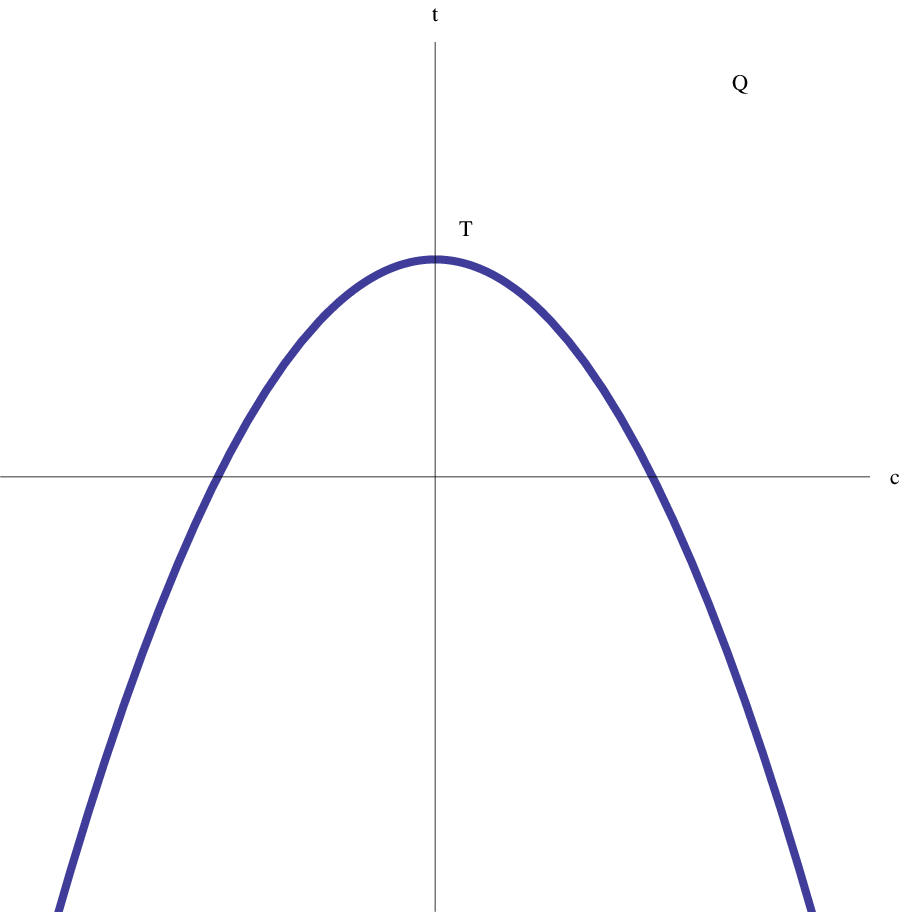}
\end{psfrags}
\caption{The boundary of the set $\Lambda$ when $M=0$ and when $T=M$.}\label{fig10}
\end{figure}

\begin{figure}
\centering
\begin{psfrags}
\psfrag{c}{{\tiny $c$}}
\psfrag{t}{{\tiny $t$}}
\psfrag{T}{{\tiny $\!\!\!T$}}
\psfrag{M}{{\tiny $\!\!\!M$}}
\psfrag{N}{{\tiny $\!M$}}
\psfrag{L}{{\tiny $c_{\lambda_1}^-(M)=0$}}
\psfrag{O}{{\tiny $c_{\lambda_1}^+(M)=0$}}
\includegraphics[scale=.5]{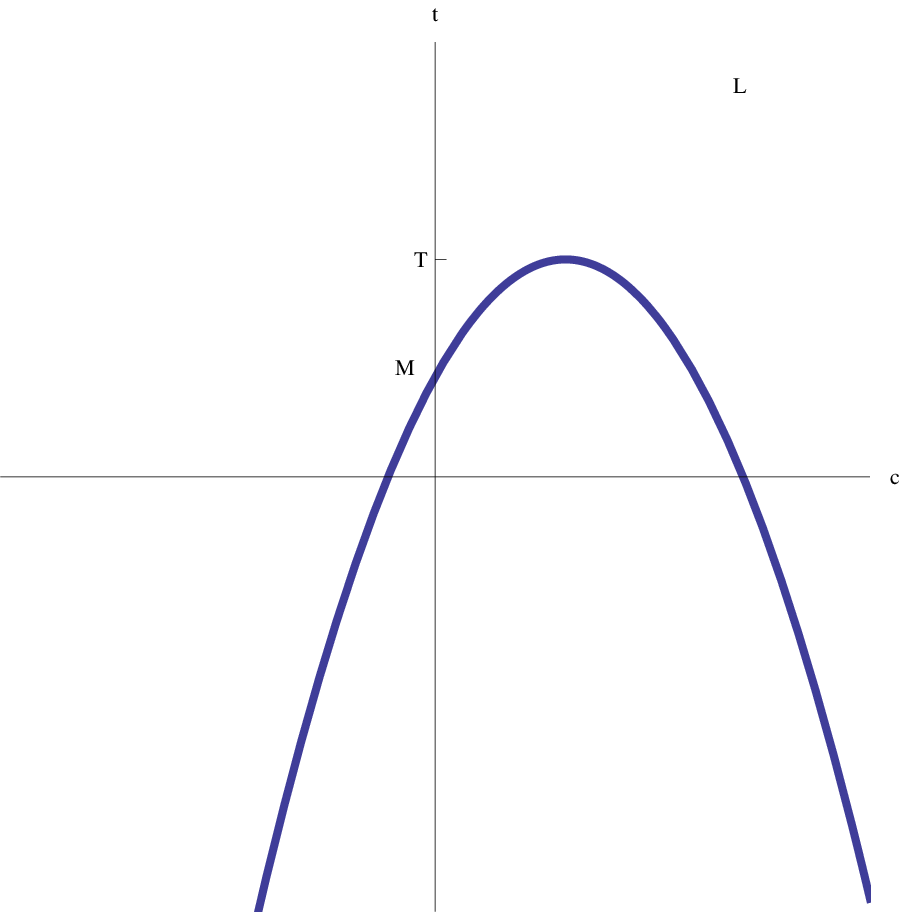}\ \ \ \ \ \ \ \ \ \ \ \ \ \ 
\includegraphics[scale=.5]{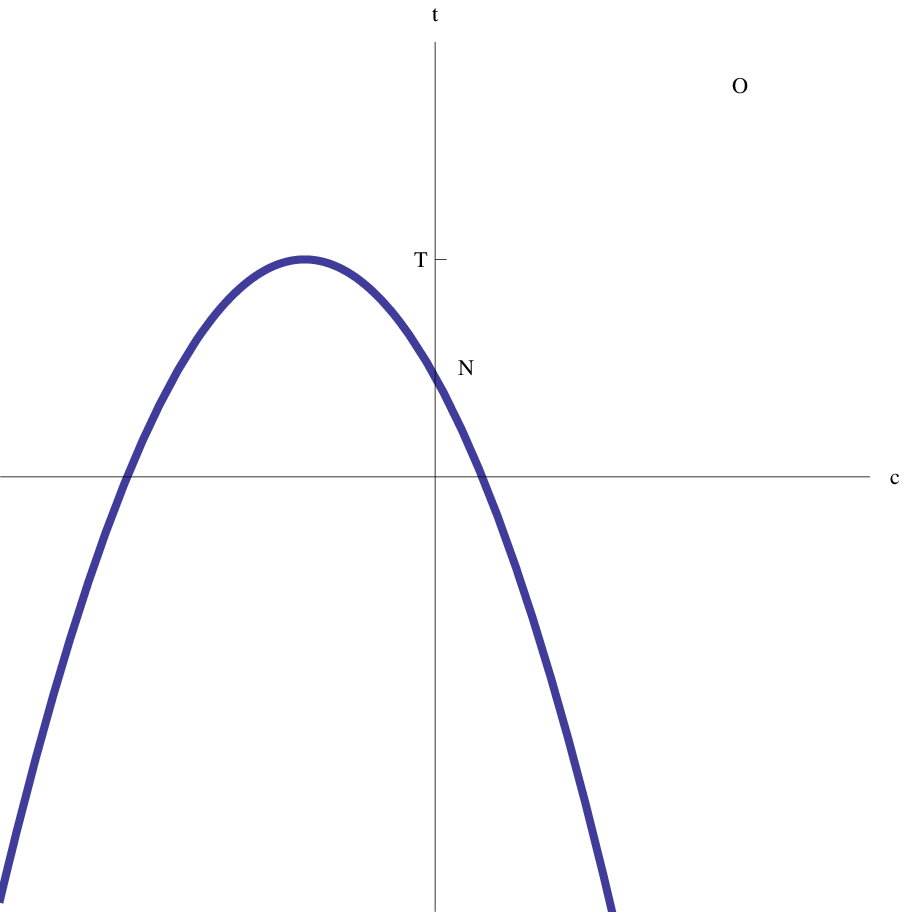}
\end{psfrags}
\caption{The boundary of the set $\Lambda$ when $T>M$.}\label{fig11}
\end{figure}

We can
describe the solutions $(a,u,c)$ of (\ref{a}) in a neighborhood of $(\lambda_1,t_0\phi+c_0\dlmu h,c_0)$ where, of course, $(t_0,c_0)\in\Lambda$.
We define
$$
\textstyle\rr:=\bigl\{y\in\h:\int y\phi\,dx=0\bigr\}.
$$
\begin{Lem}\label{TFI}
 Let $(t_0,c_0)\in\Lambda$ with $t_0\neq 0$. There exists a neighborhood $U\subset\R\times\h\times\R$
of $(\lambda_1,t_0\phi+c_0\dlmu h,c_0)$ such that the solutions of\/~{\rm (\ref{a})} in $U$ are a $C^1$ manifold
which can be parametrized, in a neighborhood $V$ of $(t_0,c_0)$, by
$(t,c)\mapsto(a(t,c),t\phi+y(t,c),c)$.
Here $y\in\rr$.
For $t>0$ we have that $a(t,c)\geq\lambda_1$, whereas
for $t<0$ it holds that $a(t,c)\leq\lambda_1$.
\end{Lem}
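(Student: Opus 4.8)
The plan is to realize the solutions of (\ref{a}) near $(\lambda_1,t_0\phi+c_0\dlmu h,c_0)$ as the zero set of a $C^1$ map and apply the implicit function theorem, solving for the pair $(a,y)$ as a function of $(t,c)$. Write each $u\in\h$ uniquely as $u=t\phi+y$ with $y\in\rr$ and $t=\int u\phi\,dx\big/\int\phi^2\,dx$, set $Z:=\{g\in L^p(\Omega):\int g\phi\,dx=0\}$, and let $Q:L^p(\Omega)\to Z$ be the projection $Qg:=g-\bigl(\int g\phi\,dx\big/\int\phi^2\,dx\bigr)\phi$, so that $L^p(\Omega)=\R\phi\oplus Z$. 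With the shorthand
$$
\Psi(a,y,t,c):=-\Delta(t\phi+y)-a(t\phi+y)+f(t\phi+y)+ch\ \in\ L^p(\Omega),
$$
define $G:\R\times\rr\times\R\times\R\to\R\times Z$ by $G(a,y,t,c):=\bigl(\int\Psi(a,y,t,c)\,\phi\,dx,\ Q\Psi(a,y,t,c)\bigr)$. Then $G(a,y,t,c)=0$ if and only if $(a,t\phi+y,c)$ solves (\ref{a}), and $G$ is $C^1$ (indeed $C^2$) because $f\in C^2$ and $\h\hookrightarrow C(\overline\Omega)$, as $p>N$. Since $(t_0,c_0)\in\Lambda$ forces $t_0\phi+c_0\dlmu h\le M$, whence $f\equiv 0$ there, and since $(\lambda_1,t_0\phi+c_0\dlmu h,c_0)$ solves (\ref{a}) (shown at the start of this section), the point $(\lambda_1,c_0\dlmu h,t_0,c_0)$ is a zero of $G$; this is the base point.

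The main step is to verify that $D_{(a,y)}G$ at the base point is an isomorphism of $\R\times\rr$ onto $\R\times Z$. Differentiating $\Psi$, and using $f'\equiv0$ on $]-\infty,M]$ (so $f'(t_0\phi+c_0\dlmu h)\equiv0$) together with one integration by parts against $\phi$, one obtains
$$
D_aG=\bigl(-t_0\int\phi^2\,dx,\ -c_0\dlmu h\bigr),\qquad D_yG[\eta]=\bigl(0,\ -(\Delta+\lambda_1)\eta\bigr),
$$
the latter using that $-(\Delta+\lambda_1)$ maps $\rr$ isomorphically onto $Z$ (a standard consequence of the Fredholm alternative, $\lambda_1$ being simple). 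Hence $D_{(a,y)}G(s,\eta)=\bigl(-s\,t_0\int\phi^2\,dx,\ -s\,c_0\dlmu h-(\Delta+\lambda_1)\eta\bigr)$, and for any $(\alpha,g)\in\R\times Z$ the first equation determines $s$ uniquely \emph{because $t_0\neq0$}, after which $\eta$ is determined, uniquely and continuously, by $(\Delta+\lambda_1)\eta=-s\,c_0\dlmu h-g\in Z$. The implicit function theorem now yields a neighborhood $V$ of $(t_0,c_0)$ and $C^1$ maps $(t,c)\mapsto a(t,c)\in\R$, $(t,c)\mapsto y(t,c)\in\rr$ with $a(t_0,c_0)=\lambda_1$, $y(t_0,c_0)=c_0\dlmu h$, such that in a suitable neighborhood $U$ of the base point the solutions of (\ref{a}) are exactly $\{(a(t,c),t\phi+y(t,c),c):(t,c)\in V\}$ (here one passes between the coordinates $(a,y,t,c)$ and $(a,t\phi+y,c)$ via the evident linear homeomorphism). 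Being the graph of a $C^1$ map over an open subset of $\R^2$ --- and the parametrization being injective with $C^1$ inverse, since $c$ and $t=\int u\phi\,dx\big/\int\phi^2\,dx$ are recovered from $(u,c)=(t\phi+y,c)$ --- this solution set is a $C^1$ two-dimensional manifold.

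It remains to prove the sign assertion. Multiplying (\ref{a}) by $\phi$, integrating, and using $-\Delta\phi=\lambda_1\phi$ and {\bf (b)} gives $(a-\lambda_1)\int u\phi\,dx=\int f(u)\phi\,dx$; with $u=t\phi+y$, $y\in\rr$ (so $\int y\phi\,dx=0$), this becomes $(a-\lambda_1)\,t\int\phi^2\,dx=\int f(u)\phi\,dx\ge0$, the inequality holding because $f\ge0$ and $\phi>0$ in $\Omega$. Therefore $(a(t,c)-\lambda_1)\,t\ge0$ for all $(t,c)\in V$, which gives $a(t,c)\ge\lambda_1$ when $t>0$ and $a(t,c)\le\lambda_1$ when $t<0$.

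I expect the main obstacle to be precisely the isomorphism check for $D_{(a,y)}G$: one must carefully handle the scalar/function splitting $\R\times Z$ of the target, invoke the bijectivity of $-(\Delta+\lambda_1):\rr\to Z$, and observe that surjectivity of the linearization genuinely fails for $t_0=0$ --- which is why that case is excluded from this lemma and treated separately using the decomposition of \cite{CP}. The remaining points are routine: the $C^1$ dependence of $G$ is the usual Nemytskii-operator estimate (using $p>N$), and the sign identity is the Fredholm relation already recorded at the beginning of Section~2.
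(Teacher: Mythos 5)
Your proposal is correct and follows essentially the same route as the paper: apply the Implicit Function Theorem to the equation written in the coordinates $u=t\phi+y$, solve for $(a,y)$ as a function of $(t,c)$, use that $f'\equiv 0$ at the base point (since $(t_0,c_0)\in\Lambda$) so the linearization reduces to $(\alpha,z)\mapsto \alpha(t_0\phi+c_0\dlmu h)+(\Delta+\lambda_1)z$, with invertibility hinging on $t_0\neq 0$, and obtain the sign of $a-\lambda_1$ from the identity $(a-\lambda_1)t\int\phi^2\,dx=\int f(u)\phi\,dx$. The only difference is cosmetic: you split the target $L^p(\Omega)=\R\phi\oplus Z$ and verify bijectivity componentwise, which merely makes explicit the surjectivity the paper leaves as an easy check.
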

\begin{proof}
This lemma is a direct consequence of the Implicit Function Theorem applied to the function $g:\R\times\R\times\rr\times\R\to L^p(\Omega)$,
defined by
$g(a,t,y,c)=\Delta(t\phi+y)+a(t\phi+y)-f(t\phi+y)-ch$, at the point
$(\lambda_1,t_0,c_0(\Delta+\lambda_1)^{-1}h,c_0)\leftrightarrow(t_0,c_0)$ with $t_0\neq 0$. Let $(\alpha,z)\in \R\times{\cal R}$.
The derivative of $g$ with respect to $(a,y)$ in the direction of $(\alpha,z)$, $Dg_{(a,y)}(\alpha,z)$, computed at the point
$(\lambda_1,t_0,c_0(\Delta+\lambda_1)^{-1}h,c_0)$, is
\begin{eqnarray*}
g_a\alpha+g_yz&=&\alpha(t\phi+y)+\Delta z+az-f'(t\phi+y)z\\
&=&\alpha[t_0\phi+c_0(\Delta+\lambda_1)^{-1}h]+\Delta z+\lambda_1 z.
\end{eqnarray*}
We used the fact that $f'$ vanishes below $M$.
We check that this derivative is injective. Setting the previous derivative equal to zero, and looking at the $\phi$ component of the right hand side,
we first get that $\alpha=0$, because $(\Delta+\lambda_1)^{-1}h$ has no $\phi$ component and $t_0\neq 0$.
Thus $\Delta z+\lambda_1 z=0$. Since $z\in{\cal R}$, it follows that $z=0$.
This proves injectivity. It is easy to check that the derivative is also surjective.
So the derivative, computed at the point
$(\lambda_1,t_0,c_0(\Delta+\lambda_1)^{-1}h,c_0)$ with $t_0\neq 0$, is a homeomorphism from $\R\times{\cal R}$ to $L^p(\Omega)$.
The statement about the sign of $a$ follows from the equality
\begin{equation}\label{sinalt}
(a-\lambda_1)t\int\phi^2\,dx=\int f(u)\phi\,dx,
\end{equation}
where $u=t\phi+y$.
\end{proof}

For use in the next sections, where we consider values of $a$ different from $\lambda_1$, we make
the following
\begin{Rmk}\label{rmk}
 Let $(a_n,u_n,c_n)$ be a sequence of solutions of\/ {\rm (\ref{a})} with $(a_n)$ and $(c_n)$
bounded. Then $(u_n)$ is uniformly bounded above.
The same conclusion follows if, instead of assuming $(c_n)$ bounded,
we suppose that\/ $\bigl(\frac{c_n}{\max u_n}\bigr)$ is bounded.
\end{Rmk}
Indeed, denoting by $x_n$ a point of maximum for $u_n$, clearly
\begin{equation}\label{paracor}
a_nu_n(x_n)-f(u_n(x_n))-c_nh(x_n)\geq 0.
\end{equation}
Admitting that $u_n(x_n)\to +\infty$,
from
$$
\frac{f(u_n(x_n))}{u_n(x_n)}\leq a_n-\frac{c_n}{u_n(x_n)}h(x_n),
$$
whose left hand side is bounded, we contradict hypothesis {\bf (iv)}.

\section{Linear growth $a$ below $\lambda_1$}

In this section we are going to analyze the case $a<\lambda_1$. We observe that, for $c$ fixed, there exists a unique solution of (\ref{a}).
Indeed, to find the solution one just has to minimize the coercive (since $a<\lambda_1$ and $F(u):=\int_0^u f(s)\,ds$ is positive) functional
$$
\int\left[{\textstyle\frac{1}{2}}(|\nabla u|^2-au^2)+F(u)+chu\right]\,dx
$$
on $H^1_0(\Omega)$. By Remark~\ref{rmk}, if $(a,u,c)$ is a solution, ${\rm ess\,sup}\, u$ is finite. As $f(u)$ only depends on the positive part of $u$, $f(u)$ belongs to $L^\infty(\Omega)$.
By elliptic regularity theory (see \cite{GT}), $u$ belongs to ${\cal H}$.
The solution is nondegenerate. In fact, suppose that $v\in{\cal H}$ is such that
$$
-\Delta v-av+f'(u)v=0.
$$
Multiplying both sides of this equation by $v$ and integrating over $\Omega$, we get that
\begin{eqnarray*}
0&=&\int\left[|\nabla v|^2-av^2+f'(u)v^2\right]\,dx\\
&\geq&\int\left[|\nabla v|^2-av^2\right]\,dx\\
&\geq&(\lambda_1-a)\int v^2\,dx
\end{eqnarray*}
Thus $v=0$ and the solution is nondegenerate.
Thus, for a fixed $a$ the set of solutions of (\ref{a}) is a one dimensional $C^1$ manifold in $\{a\}\times\h\times\R$, that can be parametrized
by $c\mapsto (a,u_a(c),c)$. This follows from the Implicit Function Theorem applied to the function
$G:\R\times {\cal H}\times\R\to L^p(\Omega)$, defined by
$$
G(a,u,c)=\Delta u+au-f(u)-ch.
$$
Observe that, for $v\in{\cal H}$, the derivative $DG_{(u)}(v)$ of $G$ with respect to $u$ in the direction of $v$,
computed at a solution $(a,u,c)$, is $DG_{(u)}(v)=\Delta v+av-f'(u)v$.

For $a<\lambda_1$, the component in $\phi$ of a solution $(a,u,c)$,
$
\frac{\int u\phi\,dx}{\int\phi^2\,dx}=:t,
$
is nonpositive due to (\ref{sinalt}).
 We wish to examine the behavior of the solutions as $a$ increases to $\lambda_1$.

\begin{Lem}\label{proj}
Let $c^-$, $c^+$ be some constants such that
$c^-<\cm$ and $c^+>\cp$, with $\cm$, $\cp$ given in \eqref{cc}, and let $\tz <0$. There exists $\delta>0$ such that
for all $\lambda_1-\delta<a<\lambda_1$ and $(a,u,c)$ solution of\/ {\rm (\ref{a})}, with
$c^-\leq c\leq c^+$, we have that $t>\tau_{\tz}(c)$ with
\begin{equation}\label{tau}
 \tau_{\tz}(c):=\left\{\begin{array}{cl}
                  \min\bigl\{(\clm)^{-1}(c),\tz\bigr\}&{\rm if}\ c^-\leq c\leq \cm,\\
		  \tz&{\rm if}\ \cm\leq c\leq \cp,\\
		  \min\bigl\{\tz,(\clp)^{-1}(c)\bigr\}&{\rm if}\ \cp\leq c\leq c^+
                 \end{array}
	  \right.
\end{equation}
(see {\rm Figure~\ref{fig12}}).
\end{Lem}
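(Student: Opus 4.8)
The plan is to argue by contradiction via a blow-up/compactness argument. Suppose the statement fails. Then there is a sequence $a_n\nearrow\lambda_1$ and solutions $(a_n,u_n,c_n)$ of (\ref{a}) with $c^-\le c_n\le c^+$ but $t_n\le\tau_{\tz}(c_n)$, where $t_n$ is the $\phi$-component of $u_n$. Since $(a_n)$ and $(c_n)$ are bounded, Remark~\ref{rmk} gives a uniform upper bound on $u_n$; combined with elliptic regularity (as in Section~3) we may pass to a subsequence so that $c_n\to c_*\in[c^-,c^+]$ and $u_n\to u_*$ in $\h$ (at least in a weaker topology, but strongly enough in $C(\overline\Omega)$ to pass to the limit in the equation). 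The limit $(\lambda_1,u_*,c_*)$ is then a solution of (\ref{a}) with $a=\lambda_1$, so by the analysis at the start of Section~2 it has the form $u_*=t_*\phi+c_*\dlmu h$ with $(t_*,c_*)\in\Lambda$, and $t_n\to t_*$.

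**Key steps.** First I would pin down the one-sided bound $t_n\le 0$ for $a_n<\lambda_1$, which follows directly from (\ref{sinalt}) since $f(u_n)\ge 0$, hence also $t_*\le 0$. The heart of the argument is then to show $t_*\ge\tau_{\tz}(c_*)$ with \emph{strict} inequality wherever $\tau_{\tz}$ is not already forced by convexity, contradicting $t_n\le\tau_{\tz}(c_n)\to\tau_{\tz}(c_*)$ (using continuity of $\tau_{\tz}$, which follows from continuity and monotonicity of $\clm,\clp$ established in Section~2). To get $t_*\ge\tau_{\tz}(c_*)$: the definitions (\ref{clambda1}) say that $(t_*,c_*)\in\Lambda$ forces, in the range $c^-\le c_*\le\cm$, that $t_*\ge(\clm)^{-1}(c_*)$ (since $\clm$ is strictly increasing and $c_*\ge\clm(t_*)$ for a point of $\Lambda$), and symmetrically $t_*\ge(\clp)^{-1}(c_*)$ in the range $\cp\le c_*\le c^+$; in the middle range $\cm\le c_*\le\cp$ one only knows $(t_*,c_*)\in\Lambda$, which by (\ref{cc}) and convexity of $\Lambda$ at least gives $t_*\ge$ some fixed negative threshold — but here one needs the genuinely extra input that $\tz<0$ was chosen \emph{below} that threshold, i.e. $\tau_{\tz}(c_*)=\tz$ is already $\le t_*$. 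So in every case $t_*\ge\tau_{\tz}(c_*)$, and then I must upgrade $\ge$ to $>$, or rather rule out the boundary case $t_*=\tau_{\tz}(c_*)$.

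**The main obstacle.** The delicate point is exactly the equality case. When $t_*=(\clm)^{-1}(c_*)$, say, the limit $(t_*,c_*)$ sits on the boundary arc of $\Lambda$, so $u_*=t_*\phi+c_*\dlmu h$ touches the level $M$ somewhere in $\Omega$. I expect one must show this cannot arise as a limit of solutions with $a_n<\lambda_1$ and $t_n\le t_*$: either a Hopf-lemma / strong-maximum-principle argument at the touching point, or a quantitative version of (\ref{sinalt}) showing that if $u_n$ gets that close to the boundary of $\Lambda$ then $\int f(u_n)\phi\,dx$ is too small relative to $(\lambda_1-a_n)|t_n|$, forcing $t_n$ strictly above $\tau_{\tz}(c_n)$ for $n$ large. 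Concretely, I would argue: if not, after passing to a further subsequence $t_n=\tau_{\tz}(c_n)$-ish forces $u_n$ to converge to a function at height $M$ on an open set or at an interior maximum point, and then using {\bf (ii)}--{\bf (iii)} and (\ref{paracor})-type estimates one derives a contradiction with $a_n<\lambda_1$. The $\tz$ being strictly negative (rather than $0$) is what gives the breathing room in the middle interval; handling the two "corner" matchings at $c_*=\cm$ and $c_*=\cp$, where the definition of $\tau_{\tz}$ switches branches, requires checking that the relevant one-sided limits of $(\clm)^{-1}$ and $(\clp)^{-1}$ at $\cm$, $\cp$ land at $0$ (by (\ref{cc}) and the discussion of the cases $M=T=0$, $0<M=T$, $0<M<T$), so the function $\tau_{\tz}$ is genuinely continuous and the contradiction closes uniformly in $c$.
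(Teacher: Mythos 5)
There is a genuine gap, and it sits exactly at what you call the heart of the argument. Your plan is to deduce from $(t_*,c_*)\in\Lambda$ that $t_*\ge\tau_{\tz}(c_*)$, but the inequality you extract from membership in $\Lambda$ goes the other way. For $c^-\le c_*\le\cm$, $(t_*,c_*)\in\Lambda$ means $\clm(t_*)\le c_*$, and since $\clm$ is strictly increasing this gives $t_*\le(\clm)^{-1}(c_*)$, not $t_*\ge(\clm)^{-1}(c_*)$; similarly for the branch governed by $\clp$. In the middle range $\cm\le c_*\le\cp$ there is no ``fixed negative threshold'' at all: if $(t,c)\in\Lambda$ then $(t',c)\in\Lambda$ for every $t'<t$, so each vertical slice of $\Lambda$ is unbounded below and convexity yields no lower bound on $t_*$. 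Consequently a limit point with $t_*\le\tau_{\tz}(c_*)$ is a perfectly legitimate element of $\Lambda$, the limit $(\lambda_1,t_*\phi+c_*\dlmu h,c_*)$ is a genuine solution, and no contradiction can be reached from the limit alone; the equality case you propose to exclude by a Hopf-lemma or quantitative \eqref{sinalt} argument is not actually contradictory, so that sketch cannot be completed.

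What is missing is the mechanism the paper uses to turn the limit into a contradiction. First, for each fixed $a<\lambda_1$ the solution is unique and depends continuously on $c$, with $u_a(0)=0$, so $t(0)=0>\tau_{\tz}(0)$; by the Intermediate Value Theorem one may replace $c_n$ so that $t_n=\tau_{\tz}(c_n)$ exactly. After the compactness step (which you do have, essentially as in the paper), the limit $(t_0,c_0)$ satisfies $t_0=\tau_{\tz}(c_0)\le\tz<0$, so Lemma~\ref{TFI} applies and gives a local parametrization $(t,c)\mapsto(a(t,c),t\phi+y(t,c),c)$ of \emph{all} solutions near the limit, in particular local uniqueness of the solution with prescribed $(t,c)$. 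On one hand the solutions along your sequence have coordinates $(\tau_{\tz}(c_n),c_n)$ and value $a_n<\lambda_1$; on the other hand $(\tau_{\tz}(c),c)\in\Lambda$ for every $c\in[c^-,c^+]$ (it lies on the graph of $\clm$, of $\clp$, or at $t=\tz$ with $\clm(\tz)\le c\le\clp(\tz)$), so $(\lambda_1,\tau_{\tz}(c)\phi+c\dlmu h,c)$ is a solution with the same $(t,c)$ coordinates, forcing $a(\tau_{\tz}(c),c)=\lambda_1$. These two statements are incompatible, and that is the contradiction. Without the IVT normalization and the local uniqueness from Lemma~\ref{TFI}, your argument does not close.
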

\begin{figure}
\centering
\begin{psfrags}
\psfrag{c}{{\tiny $c$}}
\psfrag{t}{{\tiny $t$}}
\psfrag{a}{{\tiny $\!\!\!c^-$}}
\psfrag{b}{{\tiny $c^+$}}
\psfrag{s}{{\tiny $\tz$}}
\psfrag{u}{{\tiny $\tau_{\tz}(c)$}}
\includegraphics[scale=.6]{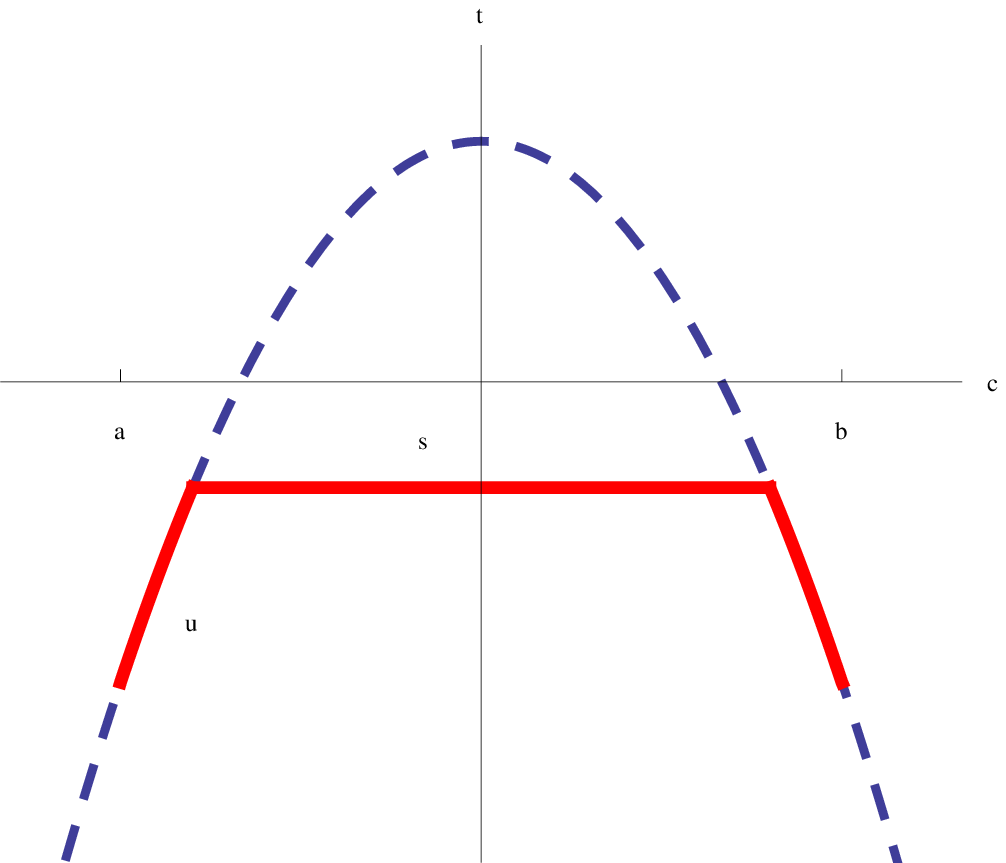}
\end{psfrags}
\caption{The graph of $\tau_{\tz}$.}\label{fig12}
\end{figure}
\begin{proof}
 We argue by contradiction. Let $a_n\nearrow\lambda_1$ be such that
$(a_n,u_n,c_n)$ is a solution of (\ref{a}), with $c^-\leq c_n\leq c^+$ and
$
t_n:=\frac{\int u_n\phi\,dx}{\int\phi^2\,dx}\leq\tau_{\tz}(c_n).
$
As remarked, for each fixed $a<\lambda_1$, the map $c\mapsto u_a(c)$ is, in particular, continuous, and $u_a(0)=0$. 
Note that $t$ is a continuous function of $u$, which itself is a continuous function of $c$.
Thus, taking into account $t(c_n)=t_n\leq\tau_{\tz}(c_n)<t(0)=0$ and using the Intermediate Value Theorem for each fixed $a$,
without loss of generality, by changing $c_n$ (and thus $u_n$), we may assume that
\begin{equation}\label{tneq}
t_n=\tau_{\tz}(c_n).
\end{equation}
 In addition, we may suppose that $c_n\to c_0$. This implies that $t_n\to t_0$.

Let $y_n=u_n-t_n\phi$.
Multiplying (\ref{a}) by $y_n$ and integrating over $\Omega$,
$$
 \int|\nabla y_n|^2\,dx=a_n\int y_n^2\,dx-\int f(t_n\phi+y_n)y_n\,dx-c_n\int hy_n\,dx.
$$
We observe that the second term in the right hand side is nonpositive since
\begin{equation}\label{padepois}
-\int f(t_n\phi+y_n)y_n\,dx=-\int f(t_n\phi+y_n)(t_n\phi+y_n)\,dx+t_n\int f(t_n\phi+y_n)\phi\,dx.
\end{equation}
Thus, we have that
$$
 \int|\nabla y_n|^2\,dx\leq a_n\int y_n^2\,dx-c_n\int hy_n\,dx,
$$
which, together with
$$
 \int|\nabla y_n|^2\,dx\geq \lambda_2\int y_n^2\,dx,
$$
implies first that $(y_n)$ is bounded in $L^2(\Omega)$ and then that $(y_n)$ is bounded in $H^1_0(\Omega)$.
We may assume that $y_n\to y_0$ in $L^2(\Omega)$ and a.e.\ in $\Omega$.
By Remark~\ref{rmk}, $({\rm ess\,sup}\,u_n)$ is uniformly bounded.
Letting $u_0=t_0\phi+y_0$, from the Dominated Convergence Theorem it follows that $f(u_n)\to f(u_0)$ in $L^p(\Omega)$.
Using equation (\ref{a}) and elliptic regularity theory (see \cite{GT}), $y_n\to y_0$ in $\h$.

From the previous paragraph,
the limit $(\lambda_1,u_0,c_0)$ satisfies equation (\ref{a}) and, from (\ref{tneq}), $t_0=\tau_{\tz}(c_0)$.
We use Lemma~\ref{TFI} at the point $(t_0,c_0)$. The solutions of (\ref{a}) in a neighborhood of the image of $(t_0,c_0)$
can be parametrized by
$(t,c)\mapsto(a(t,c),t\phi+y(t,c),c)$. From the choice of $(a_n,u_n,c_n)$,
we have that $a(\tau_{\tz}(c_n),c_n)=a(t_n,c_n)=a_n<\lambda_1$.

To obtain the desired contradiction, we show next that $a(\tau_{\tz}(c),c)=\lambda_1$ for any $c\in[c^-,c^+]$.
There are three possible cases: (i) $c=\clm(t)$ for some $t<\tz$,
(ii) $c=\clp(t)$ for some $t<\tz$ or (iii) $\clm(\tz)\leq c\leq \clp(\tz)$.
In case~(i) $\tau_{\tz}(c)=t$. So the solution
$(\lambda_1,t\phi+\clm(t)\dlmu h,\clm(t))$ can be written as $(\lambda_1,\tau_{\tz}(c)\phi+c\dlmu h,c)$, which means that $a(\tau_{\tz}(c),c)=\lambda_1$.
Similarly in case~(ii).
In case~(iii) $\tau_{\tz}(c)=\tz$. Since $\clm(\hat{t})\leq c\leq\clp(\hat{t})$,
$(\lambda_1,\hat{t}\phi+c\dlmu h,c)$ is a solution of (\ref{a}). Therefore $\lambda_1=a(\tz,c)=a(\tau_{\tz}(c),c)$.

In conclusion, on the one hand $a(\tau_{\tz}(c_n),c_n)<\lambda_1$ and
on the other hand $a(\tau_{\tz}(c),c)=\lambda_1$ for any $c\in[c^-,c^+]$.
We reached a contradiction. The lemma is proved.
\end{proof}

We extend the definition of $\tau$ in (\ref{tau}) to zero,
$$ 
 \tau_{0}(c):=\left\{\begin{array}{cl}
                  (\clm)^{-1}(c)&{\rm if}\ c^-\leq c\leq \cm,\\
		  0&{\rm if}\ \cm\leq c\leq \cp,\\
		  (\clp)^{-1}(c)&{\rm if}\ \cp\leq c\leq c^+.
                 \end{array}
	  \right.
$$ 
In the next theorem, we prove that, as $a\nearrow\lambda_1$, the solutions $(a,u_a(c),c)$ converge to
\begin{equation}\label{bo}
 (\lambda_1,u_{\lambda_1}(c),c):=(\lambda_1,\tau_0(c)\phi+c\dlmu h,c).
\end{equation}
\begin{Thm}\label{conv}
Let $c^-$, $c^+$ be some constants such that $-\infty<c^-<\cm\leq\cp<c^+<\infty$.
The solutions $(a,u_a(c),c)$ are such that the functions $c\mapsto u_a(c)$, for $c\in[c^-,c^+]$,
converge uniformly in $\h$ to $u_{\lambda_1}(c)$ as $a\nearrow\lambda_1$.
\end{Thm}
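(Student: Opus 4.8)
The plan is to argue by contradiction, using the one–sided a priori bound of Lemma~\ref{proj} together with the compactness estimates already developed in its proof. Suppose the convergence were not uniform. Then there would exist $\eps>0$, a sequence $a_n\nearrow\lambda_1$ and points $c_n\in[c^-,c^+]$ with $\|u_{a_n}(c_n)-u_{\lambda_1}(c_n)\|_\h\ge\eps$ for all $n$; passing to a subsequence, $c_n\to c_0\in[c^-,c^+]$. Writing $u_n=u_{a_n}(c_n)$, $t_n=\int u_n\phi\,dx/\int\phi^2\,dx$ and $y_n=u_n-t_n\phi\in\rr$, it then suffices to prove $u_n\to u_{\lambda_1}(c_0)$ in $\h$: since $c\mapsto u_{\lambda_1}(c)=\tau_0(c)\phi+c\dlmu h$ is continuous on $[c^-,c^+]$ (the three branches of $\tau_0$ match at $\cm$ and at $\cp$, where all equal $0$, and $\clm,\clp$ are continuous), this would give $u_{\lambda_1}(c_n)\to u_{\lambda_1}(c_0)$ and hence a contradiction. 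Working with an arbitrary violating sequence in this way handles the uniformity automatically.

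For the compactness step I would first bound $(t_n)$: by \eqref{sinalt} and $a_n<\lambda_1$ one has $t_n\le0$, while fixing any $\tz<0$ and applying Lemma~\ref{proj} gives $t_n>\tau_{\tz}(c_n)\ge\min_{c\in[c^-,c^+]}\tau_{\tz}(c)$ for $n$ large, so $(t_n)$ is bounded. The energy estimate from the proof of Lemma~\ref{proj} then carries over essentially verbatim: multiplying \eqref{a} by $y_n$, dropping the nonpositive term $-\int f(u_n)y_n\,dx$ (by \eqref{padepois}, using $f(u_n)u_n\ge0$ and $t_n\int f(u_n)\phi\,dx\le0$, the latter from \eqref{sinalt} together with $t_n\le0$, $a_n<\lambda_1$) and combining with $\int|\nabla y_n|^2\,dx\ge\lambda_2\int y_n^2\,dx$, one finds $(y_n)$ bounded in $H^1_0(\Omega)$. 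Along a further subsequence $t_n\to t_0$ and $y_n\to y_0$ in $L^2(\Omega)$ and a.e.; by Remark~\ref{rmk} the $u_n$ are uniformly bounded above, so $(f(u_n))$ is bounded in $L^\infty(\Omega)$, and a standard elliptic bootstrap makes $(u_n)$ bounded in $\h$, while the Dominated Convergence Theorem gives $f(u_n)\to f(u_0)$ in $L^p(\Omega)$ with $u_0=t_0\phi+y_0$. Passing to the limit in \eqref{a} and invoking elliptic regularity once more yields $u_n\to u_0$ in $\h$, where $(\lambda_1,u_0,c_0)$ solves \eqref{a}.

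It remains to identify $u_0$ with $u_{\lambda_1}(c_0)$, i.e.\ to show $t_0=\tau_0(c_0)$; this is the step I expect to be the main obstacle, because Lemma~\ref{proj} supplies only a lower bound and the matching upper bound has to be extracted from the geometry of $\Lambda$. By the discussion at the start of Section~2, a solution at $a=\lambda_1$ is of the form $u_0=t_0\phi+c_0\dlmu h$ with $(t_0,c_0)\in\Lambda$, equivalently $\clm(t_0)\le c_0\le\clp(t_0)$. Letting $n\to\infty$ in $t_n>\tau_{\tz}(c_n)$ (continuity of $\tau_{\tz}$) gives $t_0\ge\tau_{\tz}(c_0)$ for every $\tz<0$, and then $\tz\nearrow0$, with $\tau_{\tz}(c_0)\to\tau_0(c_0)$, gives $t_0\ge\tau_0(c_0)$. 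For the reverse inequality I split into cases: if $\cm\le c_0\le\cp$ then $\tau_0(c_0)=0\ge t_0$, so $t_0=0$; if $c^-\le c_0<\cm$ then $\clm(t_0)\le c_0<\clm(0)$, whence, $\clm$ being continuous and strictly increasing, $t_0\le(\clm)^{-1}(c_0)=\tau_0(c_0)$; the case $\cp<c_0\le c^+$ is symmetric with $\clp$ in place of $\clm$. Hence $t_0=\tau_0(c_0)$, so $u_0=u_{\lambda_1}(c_0)$. Recalling $u_n\to u_0$ and $u_{\lambda_1}(c_n)\to u_{\lambda_1}(c_0)=u_0$ in $\h$, this contradicts $\|u_n-u_{\lambda_1}(c_n)\|_\h\ge\eps$, and the theorem follows.
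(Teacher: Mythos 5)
Your proof is correct and follows essentially the same route as the paper's: contradiction via a violating sequence, the lower bound $t_n>\tau_{\tz}(c_n)$ from Lemma~\ref{proj} together with $t_n\le 0$ from \eqref{sinalt}, the compactness argument recycled from the proof of Lemma~\ref{proj}, and identification of the limit with $u_{\lambda_1}(c_0)$ plus continuity of $c\mapsto u_{\lambda_1}(c)$. Your case analysis on $\Lambda$ for the reverse inequality $t_0\le\tau_0(c_0)$ just spells out what the paper compresses into the remark that no solution at $a=\lambda_1$ has nonpositive $\phi$-component exceeding $\tau_0(c_0)$.
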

\begin{proof}
 Again the proof is by contradiction. We admit that there exists $\delta>0$, $a_n\nearrow\lambda_1$ and $c_n\in[c^-,c^+]$
such that the corresponding solutions $(a_n,u_{a_n}(c_n),c_n)$ satisfy
\begin{equation}\label{uniforme}
 \|u_{a_n}(c_n)-u_{\lambda_1}(c_n)\|_{\h}\geq\delta.
\end{equation}
We may assume that $c_n\to c_0$. Let $\tz<0$. By Lemma~\ref{proj}, for large $n$, $\tau_{\tz}(c_n)\leq t_n\leq 0$.
Modulo a subsequence, $t_n\to t_0$, where $\tau_{\tz}(c_0)\leq t_0$. Since $\tz$ is arbitrary,
$\tau_0(c_0)\leq t_0$. Again arguing as in the proof of Lemma~\ref{proj},
$y_n=u_{a_n}(c_n)-t_n\phi\to y_0$ in $\h$ and $(a_n,u_{a_n}(c_n),c_n)$ converge to
$(\lambda_1,u_0,c_0):=(\lambda_1,t_0\phi+y_0,c_0)$ which must be a solution of (\ref{a}). Using that $\tau_{0}(c_0)\leq t_0$ and $t_0\leq 0$,
we conclude $t_0=\tau_{0}(c_0)$, because there are no solutions of (\ref{a}) corresponding to a nonengative
$t_0$  satisfying $\tau_{0}(c_0)<t_0$. Therefore $(\lambda_1,u_0,c_0)=(\lambda_1,\tau_{0}(c_0)\phi+y_0,c_0)=
(\lambda_1,u_{\lambda_1}(c_0),c_0)$, according to (\ref{bo}) since necessarily $y_0=c_0(\Delta+\lambda_1)^{-1}h$.
This shows that $u_{a_n}(c_n)$ converges to $u_{\lambda_1}(c_0)$ in ${\cal H}$. Examining (\ref{bo}),
$c\mapsto u_{\lambda_1}(c)$ is continuous in ${\cal H}$.
Passing to the limit in both sides of (\ref{uniforme}), we get that
$$
0=\|u_{\lambda_1}(c_0)-u_{\lambda_1}(c_0)\|_{\h}\geq\delta.
$$
This contradiction proves the theorem.
\end{proof}
In Figure~\ref{fig15}, we plot the curves $c\mapsto(a,t(c),c)$ for $a$ in an interval $]\lambda_1-\delta,\lambda_1[$, for some
small $\delta>0$. The value of $a$ increases in the vertical direction.
\begin{figure}
\centering
\begin{psfrags}
\psfrag{c}{{\tiny $c$}}
\psfrag{a}{{\tiny $a$}}
\psfrag{t}{{\tiny $t$}}
\includegraphics[scale=.6]{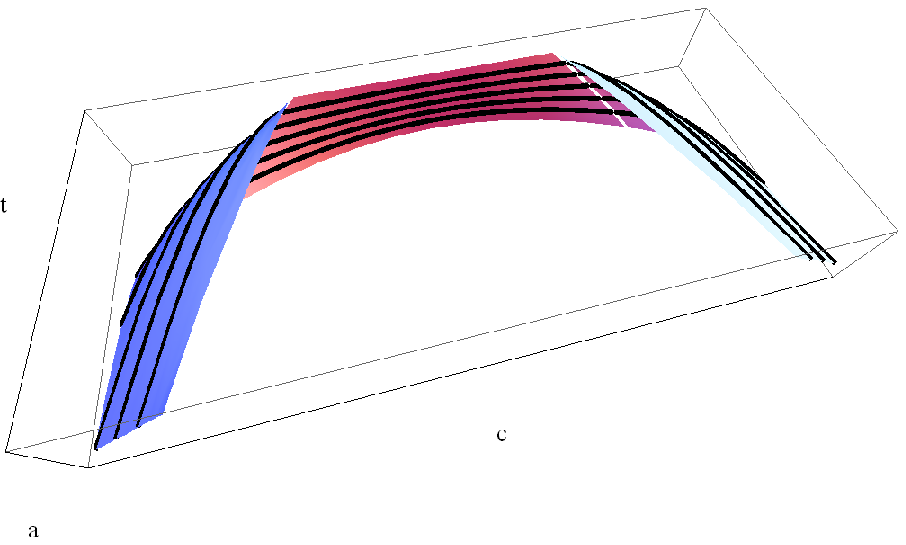}\ \ \ \ \ \ \ \
\includegraphics[scale=.6]{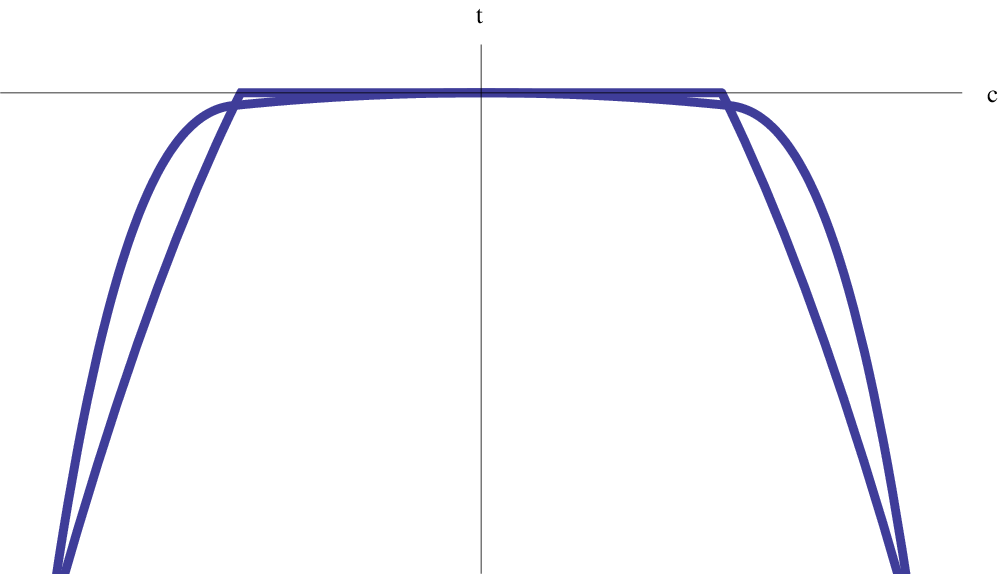}
\end{psfrags}
\caption{In the left image one can see the surface formed by the curves $c\mapsto(a,t(c),c)$ for $a$ in an interval $]\lambda_1-\delta,\lambda_1[$.
In the right image one can see a curve $c\mapsto(a,t(c),c)$ for an $a$ in $]\lambda_1-\delta,\lambda_1[$ and one can see
the boundary of the set $\Lambda\cap\{(t,c):t\leq 0\}$. This figure corresponds to a case where $M>0$.}\label{fig15}
\end{figure}

\section{Linear growth $a$ between $\lambda_1$ and $\lambda_2$}

In the case $c=0$, equation (\ref{a}) reduces to
\begin{equation}\label{b}
-\Delta u=au-f(u).
\end{equation}
From \cite[Theorem~2.1]{PG}, we know
\begin{Lem}\label{thmcz}
Suppose that $f$ satisfies {\rm\bf (i)}-{\rm\bf (iv)}.
The set of positive solutions $(a,u)$ of {\rm (\ref{b})} is a connected one dimensional manifold
$\C_\dagger$ of class $C^1$ in $\R\times\h$.
The manifold is the union of the
segment $\{(\lambda_1,t\phi):t\in\,]0,M]\}$ with a graph $\{(a,u_\dagger(a)):a\in\,]\lambda_1,+\infty[\}$.
The solutions are strictly increasing along $\C_\dagger$.
For $a>\lambda_1$ every positive solution is stable and, at each $a$, equation {\rm (\ref{b})} has no other stable solution besides $u_\dagger(a)$.
\end{Lem}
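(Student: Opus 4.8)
The statement to prove is Lemma~\ref{thmcz}, which is quoted directly from \cite[Theorem~2.1]{PG}. Since it is cited as a known result, my plan is to sketch how one would reconstruct its proof rather than rely on the citation.

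\medskip

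\textbf{Overall strategy.} The plan is to combine a local bifurcation analysis at $(\lambda_1,0)$ with global continuation and a priori bounds, then use stability/nondegeneracy to show the global branch is a graph over $a$. First I would note that $u\equiv 0$ is a trivial solution for every $a$, and that $a=\lambda_1$ is the only bifurcation point from the trivial branch into positive solutions: linearizing \eqref{b} at $u=0$ gives $-\Delta v = av$, so bifurcation of positive solutions can only occur at $a=\lambda_1$, with the first eigenfunction $\phi>0$ as the bifurcation direction. By the Crandall--Rabinowitz theorem (\cite{CR,CR2}), because $\lambda_1$ is simple, there is a $C^1$ curve of nontrivial solutions through $(\lambda_1,0)$. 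Using hypothesis {\bf (ii)} ($f\equiv 0$ on $(-\infty,M]$), this curve coincides near $(\lambda_1,0)$ with the explicit vertical segment $\{(\lambda_1,t\phi):0<t\le M\}$, since $t\phi$ solves $-\Delta(t\phi)=\lambda_1(t\phi)$ and $f(t\phi)\equiv 0$ exactly when $t\le M$ (using $\max_\Omega\phi=1$). So the ``bottom'' segment of $\C_\dagger$ is pinned down directly.

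\medskip

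\textbf{The branch for $a>\lambda_1$.} For $a>\lambda_1$ I would argue as follows. At the endpoint $(\lambda_1,M\phi)$, the solution has $u=M\phi$, with $f(u)\equiv 0$ still but $f'(u)$ not identically zero only where $u=M$, i.e.\ on a null set; in fact the linearization $-\Delta v - \lambda_1 v + f'(M\phi)v=0$ has only $v=0$ (the $f'$ term is supported on $\{\phi=1\}$, a single interior point, so it vanishes in $L^p$), hence $\lambda_1$ is not a turning point in the classical sense and one continues past it. The Implicit Function Theorem, exactly as in Lemma~\ref{TFI} and in Section~3 with $c=0$, gives that as soon as $u$ is a positive solution with $a\ge\lambda_1$ and $u>M$ somewhere, the linearized operator $-\Delta - a + f'(u)$ is invertible: multiply $-\Delta v - av + f'(u)v=0$ by $v$, use $f'(u)\ge 0$ (from {\bf (iii)}, since $f'(u)=\int_M^u f''\ge 0$) and the positivity of $v$ cannot help here, so instead I would use that the principal eigenvalue of $-\Delta + f'(u)$ exceeds $a$ for a stable solution — this is where stability enters. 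Concretely, I would show every positive solution for $a>\lambda_1$ is stable by a sweeping/sub-supersolution or by the variational characterization, and stability plus {\bf (iii)} forces nondegeneracy, so the solution set near any such point is a $C^1$ graph $a\mapsto u_\dagger(a)$. Monotonicity (solutions strictly increasing along $\C_\dagger$) follows by differentiating \eqref{b} in $a$: $\dot u$ solves $(-\Delta - a + f'(u))\dot u = u > 0$, and invertibility plus the maximum principle (the operator has positive principal eigenvalue by stability) give $\dot u > 0$.

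\medskip

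\textbf{Global extent, connectedness, and uniqueness of stable solutions.} To see the graph extends to all of $]\lambda_1,+\infty[$ I would invoke Rabinowitz global bifurcation: the connected component $\C_\dagger$ containing $(\lambda_1,0)$ is unbounded or returns to the trivial branch; since there is no other bifurcation point on the trivial branch and Remark~\ref{rmk} (with $c=0$) together with elliptic regularity give a priori $\h$-bounds on positive solutions for $a$ in bounded intervals (so no blow-up at finite $a$, and $u$ cannot escape to the trivial solution because positive solutions are bounded away from $0$ in the relevant range — here one uses that $au-f(u)$ with $a>\lambda_1$ forces a positive lower bound via the first eigenfunction test), the component must project onto all of $]\lambda_1,\infty[$. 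Connectedness and the $C^1$ manifold structure then follow from local nondegeneracy everywhere along $\C_\dagger$. Finally, for the uniqueness clause — at each $a>\lambda_1$, $u_\dagger(a)$ is the only stable solution — I would argue by a sub/supersolution sweeping argument: any stable positive solution is linearly stable, hence isolated and ``ordered,'' and a standard monotone iteration between two distinct stable solutions would produce an unstable one in between, contradicting that all positive solutions are stable. The main obstacle, I expect, is precisely establishing global stability of \emph{all} positive solutions for $a>\lambda_1$ and the accompanying a priori lower bound that prevents the branch from collapsing back to $u=0$; once those are in hand, the manifold structure, monotonicity, and uniqueness are routine consequences of the Implicit Function Theorem and the maximum principle.
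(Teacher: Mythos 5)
The paper does not prove this lemma at all: it is quoted verbatim from \cite[Theorem~2.1]{PG}, so your reconstruction has to stand on its own, and as written it contains a genuine error precisely where the two pieces of $\C_\dagger$ are glued. You claim that at $(\lambda_1,M\phi)$ the linearization $-\Delta v-\lambda_1 v+f'(M\phi)v=0$ ``has only $v=0$'' because the $f'$ term vanishes; but if that term vanishes the linearized operator is exactly $-\Delta-\lambda_1$, whose kernel is spanned by $\phi$. In fact every point of the segment $\{(\lambda_1,t\phi):0<t\le M\}$ is a degenerate solution, so the Implicit Function Theorem in $u$ at fixed $a$ (and continuation in $a$ ``past the corner'') fails exactly there. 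The device that works — used in Lemma~\ref{TFI} of this paper and in \cite{PG} — is to write $u=t\phi+y$ with $y\in\rr$ and solve for $(a,y)$ as a $C^1$ function of the parameter $t$: the derivative $(\alpha,z)\mapsto\alpha t\phi+\Delta z+\lambda_1 z$ is invertible for $t\neq 0$, and this single chart yields both the vertical segment ($a(t)=\lambda_1$, $y(t)=0$ for $t\le M$) and the turning of the branch into $a>\lambda_1$ for $t>M$. Without some such $t$-parametrization your argument cannot produce the claimed $C^1$ manifold structure at the junction.

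The analytic core of the lemma is also left essentially unproved and is partly circular: ``the principal eigenvalue of $-\Delta+f'(u)$ exceeds $a$ for a stable solution --- this is where stability enters'' assumes what is to be shown. The standard computation, in the spirit of the convexity argument the paper does carry out around (\ref{vanish}), is: let $\mu_1$ be the principal eigenvalue of $-\Delta-a+f'(u)$ with positive eigenfunction $\varphi$; testing \eqref{b} against $\varphi$ and the eigenvalue equation against $u$ gives $\mu_1\int u\varphi\,dx=\int[f'(u)u-f(u)]\varphi\,dx\ge 0$ by {\bf (ii)}--{\bf (iii)}, with strict inequality because a positive solution with $a>\lambda_1$ must exceed $M$ on a set of positive measure (otherwise $f(u)\equiv0$ and $u$ would be a positive eigenfunction for $a\neq\lambda_1$); this gives stability and nondegeneracy in one stroke, after which your monotonicity and local-graph arguments are fine. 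Two further points are not covered: uniqueness of the \emph{positive} solution at each $a$ (needed for the set of positive solutions to \emph{equal} the segment-plus-graph; the Brezis--Oswald-type argument using that $u\mapsto f(u)/u$ is nondecreasing supplies it), and the final clause, which asserts that $u_\dagger(a)$ is the only stable solution among \emph{all} solutions of \eqref{b} — this is exactly how it is used in Proposition~\ref{thmd} — whereas your sweeping argument only compares positive stable solutions and does not yield a contradiction for a sign-changing stable competitor. As it stands the proposal would not reconstruct \cite[Theorem~2.1]{PG}.
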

We turn to the case $c\neq 0$. Recall that by Remark~\ref{rmk}, for $a$ and $c$ bounded, solutions
$(a,u,c)$ are such that $u$ is bounded above.
On the other hand, for $c$ unbounded, we have
\begin{Lem}\label{ces}
 Let $I$ be any compact interval in $\R$. There exists $K>0$ such that, for all $(a,u,c)$ solution of\/ {\rm (\ref{a})}
with $a\in I$ and $|c|>K$, it holds $u\leq|c|$.
\end{Lem}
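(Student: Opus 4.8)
The plan is to argue by contradiction, using a blow-up (rescaling) argument of the kind that hypothesis {\bf (iv)} is designed for. Suppose the statement fails: then for every $K>0$ there is a solution $(a,u,c)$ with $a\in I$, $|c|>K$ and $\max_\Omega u>|c|$. Taking $K=n$ we obtain a sequence $(a_n,u_n,c_n)$ of solutions with $a_n\in I$, $|c_n|\to\infty$ (note $|c_n|>n$), and $\max_\Omega u_n>|c_n|$; in particular $\max_\Omega u_n\to+\infty$ and, crucially, $\frac{c_n}{\max u_n}$ is bounded (in absolute value by $1$). This last fact puts us exactly in the second scenario of Remark~\ref{rmk}, so $(u_n)$ is uniformly bounded above --- which already contradicts $\max_\Omega u_n\to+\infty$.

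So in fact the lemma should follow rather directly from Remark~\ref{rmk}: the whole point of stating that second alternative in the Remark was to cover precisely this situation. Let me lay the steps out cleanly. First I would fix the compact interval $I$ and assume, for contradiction, that no such $K$ works; extract the sequence $(a_n,u_n,c_n)$ as above with $a_n\in I$, $|c_n|\to\infty$, $\max_\Omega u_n>|c_n|$. Second, I would record the two consequences: $\max_\Omega u_n\to+\infty$, and $\left|\frac{c_n}{\max_\Omega u_n}\right|<1$ for all $n$, hence bounded. Third, I would invoke Remark~\ref{rmk} (second statement) with this sequence: since $(a_n)$ is bounded and $\bigl(\frac{c_n}{\max u_n}\bigr)$ is bounded, $(u_n)$ is uniformly bounded above. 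This contradicts $\max_\Omega u_n\to+\infty$, completing the proof.

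I do not expect any genuine obstacle here; the lemma is essentially a restatement of the Remark tailored to a compact range of $a$. The only point requiring a word of care is making sure the inequality in the Remark's hypothesis --- boundedness of $\frac{c_n}{\max u_n}$ --- really is implied by $\max u_n>|c_n|$, but that is immediate since then $\bigl|\frac{c_n}{\max u_n}\bigr|<1$. (One should also note the $\max$ is attained, as $u_n\in\h\hookrightarrow C(\overline\Omega)$ by $p>N$, so ``$\max$'' and ``$\mathrm{ess\,sup}$'' coincide and there is no measurability subtlety.) If for some reason one wanted a self-contained argument not citing the Remark, one would instead reproduce its proof: evaluate \eqref{a} at a maximum point $x_n$ of $u_n$ to get $a_nu_n(x_n)-f(u_n(x_n))-c_nh(x_n)\ge 0$, divide by $u_n(x_n)$, and note that $\frac{f(u_n(x_n))}{u_n(x_n)}\le a_n-\frac{c_n}{u_n(x_n)}h(x_n)$ has bounded right-hand side (using $a_n\in I$, $\|h\|_{L^\infty}<\infty$ from {\bf (a)}, and $\bigl|\frac{c_n}{u_n(x_n)}\bigr|<1$), contradicting {\bf (iv)} since $u_n(x_n)\to+\infty$. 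Either route gives the result in a couple of lines.
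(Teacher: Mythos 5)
Your proposal is correct and follows essentially the same route as the paper: argue by contradiction, note that $\max u_n>|c_n|$ makes $\frac{c_n}{\max u_n}$ bounded, and invoke the second alternative of Remark~\ref{rmk} to get a uniform upper bound on $(u_n)$, contradicting $|c_n|\to\infty$. The extra care about the maximum being attained and the optional self-contained rerun of the Remark's argument are fine but not needed beyond what the paper does.
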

\begin{proof}
 By contradiction, suppose that $(a_n,u_n,c_n)$, with $a_n\in I$ and $|c_n|\to\infty$, is a sequence of solutions of (\ref{a}),
verifying  that$\max u_n>|c_n|$. From Remark~\ref{rmk},
 $(u_n)$ is uniformly bounded above, which is a contradiction.
\end{proof}
Taking into account inequality (\ref{paracor}), we immediately obtain
\begin{Cor}\label{fc}
 Under the conditions of the previous lemma, there exists a constant $C$ such that $f(u)\leq C|c|$.
\end{Cor}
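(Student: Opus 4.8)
The plan is to read the bound off directly from the inequality that underlies (\ref{paracor}), using the a priori estimate $u\leq|c|$ of Lemma~\ref{ces} and the monotonicity of $f$.

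First I would record that $f$ is nondecreasing on all of $\R$: by {\bf (i)} and {\bf (iii)} the derivative $f'$ is continuous and nondecreasing, and by {\bf (ii)} we have $f\equiv0$ on $]-\infty,M]$, so $f'(u)=0$ for $u<M$, hence $f'(M)=0$ by continuity, hence $f'\geq0$ everywhere. In particular, for any solution $(a,u,c)$ of (\ref{a}), one has $f(u(x))\leq f\bigl(\max_{\overline\Omega}u\bigr)$ for every $x\in\overline\Omega$ (note $u\in\h$ is continuous up to the boundary).

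Next, let $x_0\in\overline\Omega$ be a point where $u$ attains its maximum. If $\max_{\overline\Omega}u\leq0$, then, since $M\geq0$, we have $u\leq M$ and $f(u)\equiv0$, so there is nothing to prove. Otherwise $x_0$ is an interior maximum, so $-\Delta u(x_0)\geq0$ (the same observation used to obtain (\ref{paracor})), and therefore
$$
f\bigl(\max u\bigr)=f(u(x_0))\leq a\,u(x_0)-c\,h(x_0)\leq\Bigl(\max_{s\in I}|s|\Bigr)\max u+\|h\|_{L^\infty(\Omega)}\,|c|.
$$
Finally, I would invoke Lemma~\ref{ces}: for $a\in I$ and $|c|>K$ one has $\max u\leq|c|$, so the right-hand side is at most $\bigl(\max_{s\in I}|s|+\|h\|_{L^\infty(\Omega)}\bigr)|c|$. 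Combined with $f(u)\leq f(\max u)$, this gives the conclusion with $C:=\max_{s\in I}|s|+\|h\|_{L^\infty(\Omega)}$.

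I do not expect any obstacle: the corollary is, as its wording indicates, an immediate consequence of (\ref{paracor}) and Lemma~\ref{ces}. The only point worth a remark is the pointwise use of the differential inequality at the maximum for $W^{2,p}$ solutions, which is already implicitly invoked in Remark~\ref{rmk} and is justified by the strong maximum principle for strong solutions when $p>N$.
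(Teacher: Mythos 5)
Your argument is correct and is essentially the paper's own: the paper deduces the corollary directly from inequality (\ref{paracor}) at a point of maximum, combined with the bound $u\leq|c|$ from Lemma~\ref{ces}, exactly as you do (your explicit remarks on the monotonicity of $f$ and the trivial case $\max u\leq 0$ just spell out what the paper leaves implicit). No gaps to report.
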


In the following proposition we show that for large values of $|c|$ equation (\ref{a}) has no solutions.
\begin{Prop}\label{ccota}
 Let $\lambda_2<a_2<\lambda_3$ and $J=]\lambda_1,a_2]$. There exists $\overline c >0$ such that for all $a\in J$
and $(a,u,c)$ solution of\/ {\rm (\ref{a})}, we have that $|c|\leq \overline c$.
\end{Prop}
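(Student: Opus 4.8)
The plan is to argue by contradiction. Assume there is a sequence $(a_n,u_n,c_n)$ of solutions of~(\ref{a}) with $a_n\in J$ and $|c_n|\to\infty$; passing to a subsequence, $a_n\to a_\infty\in[\lambda_1,a_2]$. Write $t_n:=\frac{\int u_n\phi\,dx}{\int\phi^2\,dx}$ and $s_n:=\frac{\int u_n\psi\,dx}{\int\psi^2\,dx}$. By Lemma~\ref{ces} and Corollary~\ref{fc}, for $n$ large we have $u_n\le|c_n|$ and $0\le f(u_n)\le C|c_n|$ a.e.; combined with~(\ref{sinalt}) and $a_n>\lambda_1$ this gives $0\le t_n\le C|c_n|$. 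The idea is to blow up with the factor $\rho_n:=|c_n|+|s_n|\ (\to\infty)$; including $|s_n|$ is precisely what will let $a_n$ approach, or cross, $\lambda_2$ without harm. Set $v_n:=u_n/\rho_n$, $\sigma_n:=c_n/\rho_n$ and $g_n:=f(u_n)/\rho_n$, so that $-\Delta v_n=a_nv_n-g_n-\sigma_nh$, with $0\le g_n\le C$ in $L^\infty$ and $|\sigma_n|\le1$. Decompose $v_n=\bar t_n\phi+\bar s_n\psi+\bar z_n$ orthogonally in $L^2$; then $\bar t_n=t_n/\rho_n\in[0,C]$, while $|\bar s_n|=|s_n|/\rho_n$ and $|\sigma_n|+|\bar s_n|=1$, so in particular $|\bar s_n|\le1$.

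The first real step is to bound $(v_n)$ in $H^1_0(\Omega)$. Multiplying~(\ref{a}) by $u_n$, integrating, and dividing by $\rho_n^2$, and using that $\int f(u_n)u_n\,dx\ge0$ (as $f\ge0$ and $f(u)>0$ only where $u>M\ge0$), we get $\int|\nabla v_n|^2\,dx\le a_n\int v_n^2\,dx+\|h\|_{L^2}\|v_n\|_{L^2}$. Expanding the left side in the eigendecomposition and using $\int|\nabla\bar z_n|^2\,dx\ge\lambda_3\int\bar z_n^2\,dx$ yields
$$(\lambda_3-a_n)\!\int\bar z_n^2\,dx\le\|h\|_{L^2}\|v_n\|_{L^2}+(a_n-\lambda_1)\bar t_n^2\!\int\phi^2\,dx+(a_n-\lambda_2)\bar s_n^2\!\int\psi^2\,dx.$$
On the right, the last two terms are bounded because $\bar t_n\le C$, $|\bar s_n|\le1$ and $a_n\le a_2$, and $\|v_n\|_{L^2}\le C+\|\bar z_n\|_{L^2}$; since $\lambda_3-a_n\ge\lambda_3-a_2>0$, this forces $\|\bar z_n\|_{L^2}$, hence $\|v_n\|_{L^2}$ and then $\|v_n\|_{H^1_0}$, to be bounded uniformly in $n$.

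Now pass to a further subsequence: $v_n\to v$ in $L^2(\Omega)$ and a.e., $v_n\weak v$ in $H^1_0(\Omega)$, $g_n\weak g\ge0$ weak-$*$ in $L^\infty(\Omega)$, $\sigma_n\to\gamma$ with $|\gamma|\le1$, so that $-\Delta v=a_\infty v-g-\gamma h$ in $\D'(\Omega)$. On $\{v>0\}$ we have $u_n=\rho_nv_n\to+\infty$ a.e., hence $g_n=\frac{f(u_n)}{u_n}v_n\to+\infty$ a.e. there by~\textbf{(iv)}; since $(g_n)$ is bounded in $L^\infty$, this forces $|\{v>0\}|=0$, i.e.\ $v\le0$ a.e. On the other hand $\int v\phi\,dx=\lim\int v_n\phi\,dx=\bigl(\int\phi^2\,dx\bigr)\lim\bar t_n\ge0$, and $\phi>0$, so $v\equiv0$. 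Testing the equation for $v_n$ against $\phi$ and using~\textbf{(b)} gives $\int g_n\phi\,dx=(a_n-\lambda_1)\int v_n\phi\,dx\to0$, whence $g\equiv0$ because $g\ge0$ and $\phi>0$. The limit equation now reads $\gamma h=0$ a.e. If $\gamma\ne0$, then $h\equiv0$, contradicting~\textbf{(c)}. If $\gamma=0$, then $|\sigma_n|\to0$, hence $|\bar s_n|\to1$, so $\bigl|\int v_n\psi\,dx\bigr|=|\bar s_n|\int\psi^2\,dx\to\int\psi^2\,dx>0$, contradicting $\int v_n\psi\,dx\to\int v\psi\,dx=0$. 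Either way we reach a contradiction, and the proposition follows.

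The main obstacle is the $H^1_0$-bound on the blow-up sequence: since $a$ ranges up to $a_2>\lambda_2$, the form $\int|\nabla v_n|^2-a_n\int v_n^2$ does not by itself control $\int v_n^2$ in the $\psi$-direction. Normalizing by $\rho_n=|c_n|+|s_n|$ rather than by $|c_n|$ keeps $\bar s_n$ bounded a priori, and the spectral gap $a_2<\lambda_3$ then controls the remaining directions. A secondary subtlety is that the $u_n$ need not be bounded below, so all pointwise control is on $u_n^+$ only (through $f\ge0$, Lemma~\ref{ces} and Corollary~\ref{fc}), with everything else done in the energy norm.
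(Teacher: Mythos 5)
Your proof is correct, and although its skeleton (contradiction, blow-up, decomposition along $\phi$, $\psi$ and the rest, the spectral gap below $\lambda_3$, the limit equation tested against $\phi$) matches the paper's, you resolve the delicate $\psi$-direction by a genuinely different mechanism. The paper normalizes by $s c_n$ with $s=\pm1$, so it must prove a priori that the $\psi$-component $\eta_n$ of $v_n=u_n/(s c_n)$ is bounded; it does this with the pointwise bound $v_n\le1$ from Lemma~\ref{ces} combined with sign information ($t_n\phi\ge0$, the orthogonal part bounded in $L^2$ via the resolvent $(\Delta+a_n)^{-1}$ acting away from $\lambda_3$, and $\psi$ changing sign), and its final contradiction is immediate because the coefficient of $h$ in the limit equation is $\pm1$ by construction. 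You instead absorb the $\psi$-component into the normalization $\rho_n=|c_n|+|s_n|$, so that $|\sigma_n|+|\bar s_n|=1$ automatically, control the remaining directions by an energy estimate (multiplying by $u_n$, discarding $\int f(u_n)u_n\,dx\ge0$) together with $\lambda_3-a_2>0$, and postpone the question of which term dominates to the endgame dichotomy: $\gamma\ne0$ forces $h\equiv0$, contradicting \textbf{(c)}, while $\gamma=0$ forces $|\bar s_n|\to1$, contradicting $v\equiv0$; this replaces the paper's pointwise/sign argument by a purely variational one, at the cost of the extra case at the end. Both arguments use the simplicity of $\lambda_2$ at the same spot (for you, the inequality $\int|\nabla\bar z_n|^2\,dx\ge\lambda_3\int\bar z_n^2\,dx$; for the paper, the componentwise resolvent bound). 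One small caution: the upper bound $t_n\le C|c_n|$ must be obtained by integrating the pointwise bound $u_n\le|c_n|$ of Lemma~\ref{ces} against $\phi$ (as the paper does), not by dividing the identity (\ref{sinalt}) by $a_n-\lambda_1$, since $a_n$ may approach $\lambda_1$ along the sequence; your citation of Lemma~\ref{ces} makes the correct derivation available, but the phrasing should make this explicit.
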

\begin{proof}
 We argue by contradiction. Suppose that $(a_n,u_n,c_n)$ is a solution to~(\ref{a}) with $a_n\in J$, and $c_n\to+\infty$
or $c_n\to -\infty$.
We define $\sss=+1$ in the first case and $\sss=-1$ in the second case.
 Without loss of generality, we assume that $a_n\to a$. Define $v_n=\frac{u_n}{\sss c_n}$. The function $v_n$ satisfies
\begin{equation}\label{eq_vn}
\Delta v_n+a_n v_n-\frac{f(u_n)}{\sss c_n}-\sss h=0.
\end{equation}
From Corollary~\ref{fc}, we know that
\begin{equation}\label{eq_fc}
\frac{f(u_n)}{\sss c_n}\leq C
\end{equation}
for sufficiently large $n$. Since $f$ is nonnegative, we also have $\frac{f(u_n)}{\sss c_n}\geq 0$.
Recall that we assumed that $\lambda_2$ is simple
and decompose
$$
v_n=t_n\phi+\eta_n\psi+w_n,
$$
where $w_n$ denotes the component of $v_n$ orthogonal to both $\phi$ and $\psi$.
We prove successively that $w_n$ is uniformly bounded in $L^2(\Omega)$ and that $t_n$, $\eta_n$ are bounded.
By Corollary~\ref{fc}, the function $\frac{f(u_n)}{\sss c_n}+\sss h$ is bounded in $L^\infty(\Omega)$. Thus,
its component $z_n$ orthogonal to the first two eigenfunctions is also bounded in $L^\infty(\Omega)$. Since $a_n$ is bounded away from $\lambda_3$,
$(w_n)$ is uniformly bounded in $L^2(\Omega)$.
This is because
$$
\Delta w_n+a_nw_n=z_n,\quad{\rm or}\quad w_n=(\Delta+a_n)^{-1}z_n.
$$
So, for example, the component of $w_n$ on a third eigenfunction is the component of $z_n$ on that third eigenfunction divided by $(a_n-\lambda_3)$.
Similarly to (\ref{sinalt}), the values $t_n$ are given by
$$
(a_n-\lambda_1)t_n\int \phi^2\,dx=\int{\textstyle \frac{f(u_n)}{\sss c_n}}\phi\,dx
$$
and hence are nonnegative, $t_n\geq 0$. An upper bound for $t_n$ follows from $v_n\leq 1$,
due to Lemma~\ref{ces},
which gives
$t_n\leq\int\phi\,dx\left/\int\phi^2\,dx\right.$. Suppose that $|\eta_n|\to\infty$. The sequence $\eta_n\psi$ is not bounded above or
below in $L^\infty(\Omega)$ because $\psi$ changes sign. This contradicts $v_n\leq 1$
(because $v_n^+$ and $w_n$ are bounded in $L^2(\Omega)$, $(\eta_n\psi)^+$ is unbounded in $L^2(\Omega)$ and $t_n\phi\geq 0$). We conclude $(\eta_n)$ is bounded and
$(v_n)$ is uniformly bounded in $L^2(\Omega)$.

From (\ref{eq_vn}), $(v_n)$ is uniformly bounded in $H^1_0(\Omega)$. We may assume that
$v_n\weak v$ in $H^1_0(\Omega)$, $v_n\to v$ in $L^2(\Omega)$ and $v_n\to v$ a.e.\ in $\Omega$.
We claim that $v\leq 0$ a.e.\ in $\Omega$. Suppose that there exists a point $x\in\Omega$,
in the set where $v_n\to v$, such that $v(x)>0$.
Then $u_n(x)\to +\infty$. So,
$$
\frac{f(u_n(x))}{\sss c_n}=\frac{f(u_n(x))}{u_n(x)}\frac{u_n(x)}{\sss c_n}\to+\infty\times v(x)=+\infty.
$$
This contradicts (\ref{eq_fc}) a.e.\ and shows that $v\leq 0$ a.e.\ in $\Omega$.
In order to pass to the limit in (\ref{eq_vn}), we observe that
$\frac{f(u_n)}{\sss c_n}\weak f_\infty$ in $L^2(\Omega)$,
where $f_\infty\geq 0$. The limit equation is
$$
\Delta v+av-f_\infty-\sss h=0.
$$
Multiplying both sides by $\phi$ and integrating over $\Omega$, we arrive to
$$
(a-\lambda_1)\int v\phi\,dx=\int f_\infty\phi\,dx.
$$
The left hand side is nonpositive and the right hand side is nonnegative.
This implies that $v$ and $f_\infty$ are both identically equal to zero.
We have reached a contradiction because $h$ is nontrivial.
\end{proof}
The conclusion of the previous proposition also holds for the case where $\lambda_2$ has
multiplicity greater than one, since if a linear combination of second
eigenfunctions is bounded, then each of the coefficients of that linear combination
is bounded.

In the next lemma, we give a condition which ensures that a
sequence of solutions of (\ref{a}) converges, modulo a subsequence.
\begin{Lem}\label{continua}
Let $(a_n,u_n,c_n)$ be a sequence of solutions of\/ {\rm (\ref{a})} with $a_n\in J$, $a_n\to a$ and $c_n\to c$.
Then, modulo a subsequence, $(u_n)$ converges in $\h$.
\end{Lem}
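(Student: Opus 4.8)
The plan is to run the same compactness argument already used twice (in Lemma~\ref{proj} and Theorem~\ref{conv}), but now we cannot split off the $\phi$-component alone, because $a$ may lie strictly between $\lambda_1$ and $\lambda_2$, so $\Delta+a$ is not invertible only on the orthogonal complement of $\phi$; still, for $a\in J=\,]\lambda_1,a_2]$ with $a_2<\lambda_3$ the operator $\Delta+a$ is a Fredholm operator whose kernel is contained in $\mathrm{span}\{\phi,\psi\}$, and this is the structure we exploit. First I would decompose $u_n=t_n\phi+\eta_n\psi+w_n$ with $w_n$ orthogonal to $\phi$ and $\psi$. By Corollary~\ref{fc} (using that $a_n,c_n$ are bounded) and Remark~\ref{rmk}, $u_n$ is bounded above and $f(u_n)$ is bounded in $L^\infty(\Omega)$; hence the component $z_n$ of $f(u_n)+c_nh$ orthogonal to $\phi$ and $\psi$ is bounded in $L^\infty(\Omega)$, and since $a_n$ stays away from $\lambda_3$, the relation $w_n=(\Delta+a_n)^{-1}z_n$ shows $(w_n)$ is bounded in $\h$, exactly as in the proof of Proposition~\ref{ccota}.

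The remaining task is to bound $t_n$ and $\eta_n$. For $t_n$ I would use the analogue of (\ref{sinalt}),
\begin{equation}\label{continua-t}
(a_n-\lambda_1)t_n\int\phi^2\,dx=\int f(u_n)\phi\,dx\geq 0,
\end{equation}
which gives $t_n\geq 0$; an upper bound comes from the fact that $u_n$ is bounded above in $L^\infty(\Omega)$ together with $\int u_n\phi\,dx=t_n\int\phi^2\,dx$ and the boundedness of $w_n,\eta_n\psi$ in $L^2$ — more precisely, once $\eta_n$ is controlled $t_n$ is too, so the crux is $\eta_n$. Multiplying (\ref{a}) by $\psi$ gives
\begin{equation}\label{continua-eta}
(a_n-\lambda_2)\eta_n\int\psi^2\,dx=\int f(u_n)\psi\,dx-c_n\int h\psi\,dx.
\end{equation}
If $a_n\to a\neq\lambda_2$, the left side is $\asymp\eta_n$ while the right side is bounded (by boundedness of $f(u_n)$ and $c_n$), so $\eta_n$ is bounded. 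If $a_n\to\lambda_2$, this identity no longer controls $\eta_n$; instead I would argue by contradiction assuming $|\eta_n|\to\infty$, rescale by $\eta_n$, and observe that $v_n:=u_n/\eta_n\to\pm\psi$ in $L^2$ after dividing the PDE by $\eta_n$ and passing to the limit (the $f(u_n)/\eta_n$ and $c_nh/\eta_n$ terms vanishing in $L^2$), so $v_n\to\pm\psi$ pointwise a.e.; since $\psi$ changes sign, on the set where $v_n\to-\psi$ has a definite sign we would get $u_n\to-\infty$ on a set of positive measure, but then on the complementary set $u_n\to+\infty$, forcing $f(u_n)/\eta_n\to+\infty$ there, which contradicts $f(u_n)$ being bounded. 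Hence $(\eta_n)$ is bounded, and then so is $(t_n)$.

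With $t_n,\eta_n,w_n$ all bounded, $(u_n)$ is bounded in $H^1_0(\Omega)$; passing to a subsequence, $u_n\to u_0$ in $L^2$ and a.e., and since $u_n$ is bounded above in $L^\infty$ and $f$ is continuous, dominated convergence gives $f(u_n)\to f(u_0)$ in $L^p(\Omega)$. Feeding this into (\ref{a}) and using elliptic regularity (\cite{GT}), $u_n\to u_0$ in $\h$, which is the assertion. I expect the main obstacle to be the borderline case $a_n\to\lambda_2$: there the second eigenfunction direction is in the kernel of the limiting linear operator, so (\ref{continua-eta}) degenerates, and one genuinely needs the sign-changing nature of $\psi$ together with hypothesis {\bf (iv)} (through the superlinearity of $f$) to rule out $|\eta_n|\to\infty$ — this is the same mechanism, applied to $\psi$ rather than to $v$, that was used at the end of Proposition~\ref{ccota}.
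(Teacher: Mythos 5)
Your proposal is correct and follows essentially the same route as the paper, whose proof simply invokes ``an argument similar to, but simpler than'' that of Proposition~\ref{ccota} (the decomposition $u_n=t_n\phi+\eta_n\psi+w_n$, the uniform upper bound on $u_n$ from Remark~\ref{rmk}, invertibility of $\Delta+a_n$ on the orthogonal complement since $a_n\leq a_2<\lambda_3$, the sign identity for $t_n$, and the sign change of $\psi$ to control $\eta_n$), followed by elliptic regularity and subtraction of the equations for $u_m$ and $u_n$ to get convergence in $\h$. Two minor streamlinings: the sign-change mechanism for $\eta_n$ works uniformly, so your case split at $a=\lambda_2$ is unnecessary, and $t_n$ is bounded directly by $0\leq t_n\int\phi^2\,dx=\int u_n\phi\,dx\leq C\int\phi\,dx$ independently of $\eta_n$ --- which is the form of the bound you actually need inside the contradiction argument where $|\eta_n|\to\infty$ is assumed.
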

\begin{proof}
 Using an argument similar to, but simpler than, the one in the proof of the previous proposition, one can show  that $(u_n)$
is uniformly bounded in $H_0^1(\Omega)$. By elliptic regularity theory, $(u_n)$ is uniformly bounded in $\h$.
Modulo a subsequence, $u_n\weak u$ in $H^1_0(\Omega)$, $u_n\to u$ in $L^2(\Omega)$ and $u_n\to u$ a.e.\ in $\Omega$.
Working with that subsequence and
subtracting the equations for $u_m$ and $u_n$, one can prove that $(u_n)$ converges in $\h$.
\end{proof}
Let $a>\lambda_1$. Starting at the stable solution $(a,u_\dagger(a),0)$,
and keeping $a$ fixed, we can use the Implicit Function Theorem to follow
a branch of solutions, taking $c$ as parameter. Lemma~\ref{continua} guarantees that the branch will not go to infinity.
Since, from Proposition~\ref{ccota}, solutions do not exist for large $|c|$, there must exist at least two degenerate solutions
with Morse index equal to zero,
$(a,\usm(a),\csm(a))$ and $(a,\usp(a),\csp(a))$, the first corresponding to a negative value of $c$ and the
second corresponding to a positive value of $c$.
 We recall that the Morse index of a solution is the number of negative eigenvalues
of the linearized problem at the solution, and we recall that the solution is said to be degenerate if one of the eigenvalues of
the linearized problem is equal to zero. In the next lemma we examine the behavior of the
branch of solutions around a degenerate solution with
Morse index equal to zero.
\begin{Lem}[{\cite[Theorem~3.2]{CR2}}, {\cite[p.~3613]{OSS1}}]\label{turn} Let $a>\lambda_1$ be fixed and
${\bm p}_*=(u_*,c_*)$ be a degenerate solution with Morse index equal to zero, with $c_*>0$ (respectively $c_*<0$).
There exists a neighborhood of ${\bm p}_*$ in $\h\times\R$ such that the
set of solutions of {\rm (\ref{a})} in the neighborhood is a $C^1$ manifold.
This manifold is ${\bm m}^\sharp\cup\{{\bm p}_*\}\cup{\bm m}^*$. Here
\begin{itemize}
\item
${\bm m}^\sharp$ is a manifold of nondegenerate solutions with Morse index equal to one, which
is a graph $\{(u^\sharp(c),c):c\in\,]c_*-\eps_*,c_*[\}$ ($\{(u^\sharp(c),c):c\in\,]c_*,c_*+\eps_*[\}$).
\item ${\bm m}^*$ is a manifold of stable solutions, which
is a graph $\{(u^*(c),c):c\in\,]c_*-\eps_*,c_*[\}$ ($\{(u^*(c),c):c\in\,]c_*,c_*+\eps_*[\}$).
\end{itemize}
The value $\eps_*$ is positive. The manifolds ${\bm m}^\sharp$ and ${\bm m}^*$ are connected by $\{{\bm p}_*\}$.
\end{Lem}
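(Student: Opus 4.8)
The plan is to derive the statement from the saddle--node (turning point) theorem \cite[Theorem~3.2]{CR2}, following the scheme of \cite[p.~3613]{OSS1}, by checking its hypotheses for the $C^2$ map $\Phi\colon\h\times\R\to L^p(\Omega)$ given by $\Phi(u,c)=\Delta u+au-f(u)-ch$ (it is $C^2$ because $f\in C^2(\R)$), at the point ${\bm p}_*=(u_*,c_*)$, for the fixed value $a>\lambda_1$. First I would analyse the linearization $L_0:=\Phi_u(u_*,c_*)$, $L_0v=\Delta v+av-f'(u_*)v$. Since $\h\subset C^1(\overline\Omega)$, the potential $f'(u_*)$ is bounded and continuous, so $-L_0=-\Delta-a+f'(u_*)$ is a self-adjoint operator on $L^2(\Omega)$ with Dirichlet boundary conditions and discrete spectrum. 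Because ${\bm p}_*$ is degenerate and has Morse index zero, its principal (smallest) eigenvalue is $0$; this eigenvalue is simple, with an eigenfunction $\varphi_*$ which we take positive in $\Omega$. Hence $\ker L_0=\mathrm{span}\{\varphi_*\}$ is one dimensional and, by self-adjointness and elliptic regularity, $\mathrm{Range}\,L_0=\{g\in L^p(\Omega):\int g\varphi_*\,dx=0\}$ is closed of codimension one; thus $L_0$ is Fredholm of index zero.

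The crucial step, which is genuinely new with respect to \cite{OSS1} (where $h>0$ makes it trivial), is the transversality condition $\Phi_c(u_*,c_*)=-h\notin\mathrm{Range}\,L_0$, that is, $\int h\varphi_*\,dx\neq 0$. I would establish it, and simultaneously determine the sign of $\int h\varphi_*\,dx$, by testing (\ref{a}) at $(a,u_*,c_*)$ against $\varphi_*$ and using $-\Delta\varphi_*=a\varphi_*-f'(u_*)\varphi_*$; integrating by parts gives
$$
\int\bigl[f(u_*)-u_*f'(u_*)\bigr]\varphi_*\,dx=-c_*\int h\varphi_*\,dx.
$$
By {\bf (ii)}, {\bf (iii)} and $f(0)=0$, one has $f(\sigma)-\sigma f'(\sigma)\leq 0$ for every $\sigma\in\R$, with equality precisely when $\sigma\leq M$. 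So the left-hand side above is $\leq 0$, and its vanishing would force $u_*\leq M$ throughout $\Omega$ (since $\varphi_*>0$), whence $f(u_*)\equiv 0$ and $L_0\varphi_*=\Delta\varphi_*+a\varphi_*=0$ with $\varphi_*>0$ in $\Omega$, forcing $a=\lambda_1$ --- a contradiction. Hence the left-hand side is strictly negative, so $c_*\int h\varphi_*\,dx>0$; in particular $\int h\varphi_*\,dx\neq 0$, and it has the sign of $c_*$.

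Next I would check the second-order nondegeneracy that makes ${\bm p}_*$ a genuine fold: $\langle\varphi_*,\Phi_{uu}(u_*,c_*)[\varphi_*,\varphi_*]\rangle=-\int f''(u_*)\varphi_*^3\,dx\neq 0$. If this integral vanished, then (its integrand being $\geq 0$) $f''$ would vanish on the range of $u_*$, an interval containing $0$ since $u_*=0$ on $\partial\Omega$; as $f'(0)=0$ this would give $f'(u_*)\equiv 0$ and again $L_0\varphi_*=\Delta\varphi_*+a\varphi_*=0$, $\varphi_*>0$, forcing $a=\lambda_1$, a contradiction. Then \cite[Theorem~3.2]{CR2} applies: in a small neighbourhood $U$ of ${\bm p}_*$ in $\h\times\R$ the solution set of (\ref{a}) is the range of a $C^1$ curve $s\mapsto\bigl(u_*+s\varphi_*+o(s),\,c_*+\tfrac12 c''(0)s^2+o(s^2)\bigr)$, $s$ in a small interval about $0$; its velocity has $\h$-component close to $\varphi_*\neq 0$, so the range is a $C^1$ one-dimensional manifold. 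The Lyapunov--Schmidt bifurcation function gives $c''(0)=-\bigl(\int f''(u_*)\varphi_*^3\,dx\bigr)\big/\bigl(\int h\varphi_*\,dx\bigr)$, which by the previous paragraph is $<0$ if $c_*>0$ and $>0$ if $c_*<0$. Consequently, for each $c$ in a one-sided interval $\,]c_*-\eps_*,c_*[\,$ (respectively $\,]c_*,c_*+\eps_*[\,$), with $\eps_*>0$, there are exactly two solutions of (\ref{a}) in $U$, corresponding to $s>0$ and $s<0$; each of the two sub-branches is a graph over that interval, and they meet only at ${\bm p}_*$.

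Finally, the Morse index assignment follows from the linearized stability (exchange of stability) part of \cite[Theorem~3.2]{CR2}: the principal eigenvalue $\mu(s)$ of $-\Phi_u$ along the curve satisfies $\mu(0)=0$ and, since $\mu'(0)$ is controlled by the same nonzero quantities, changes sign transversally at $s=0$; as only the principal eigenvalue is affected, on one sub-branch all eigenvalues are positive --- nondegenerate and stable, Morse index $0$: this is ${\bm m}^*$ --- and on the other exactly one eigenvalue is negative and the rest positive --- nondegenerate, Morse index $1$: this is ${\bm m}^\sharp$. This yields the decomposition ${\bm m}^\sharp\cup\{{\bm p}_*\}\cup{\bm m}^*$ in the statement, the two sub-branches being connected by $\{{\bm p}_*\}$. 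I expect the transversality step, together with pinning down the sign of $\int h\varphi_*\,dx$, to be the main obstacle: since $h$ changes sign one cannot argue as in \cite{OSS1}, and must instead exploit the convexity of $f$ and the strict inequality $a>\lambda_1$; the rest is careful bookkeeping with standard bifurcation theory.
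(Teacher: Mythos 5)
Your proposal is correct and follows essentially the same route as the paper: the key new verifications (positivity of the principal eigenfunction $w_*$, the transversality $\int hw_*\,dx\neq 0$ with sign determined by $c_*$ via the identity $\int(f'(u_*)u_*-f(u_*))w_*\,dx=c_*\int hw_*\,dx$ and the convexity/ODE argument forcing $a=\lambda_1$ otherwise, and the nonvanishing of $\int f''(u_*)w_*^3\,dx$) are exactly those in the paper, leading to the same formula $c''=-\int f''(u_*)w_*^3\,dx\,/\int hw_*\,dx$ and the same sign and stability conclusions. The only cosmetic difference is that you quote \cite[Theorem~3.2]{CR2} for the fold and the exchange of stability, whereas the paper reproduces that computation directly via an augmented Implicit Function Theorem (including the eigenvalue parameter $\mu$) to get $\mu'(t_*)>0$ and $c''(t_*)$ explicitly.
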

\begin{proof} 
Let $(u_*,c_*)$ be a degenerate solution with Morse index equal to zero.
Let
$t_*$ and $y_*$ be such that
$u_*=t_*w_*+y_*$, with $$w_*\in S:=\{w\in\h:\textstyle\int w^2\,dx=\int\phi^2\,dx\}$$ satisfying
\begin{equation}\label{linearized}
\Delta w_*+aw_*-f'(u_*)w_*=0.
\end{equation}
$w_*>0$, and $$y_*\in\rr_{w_*}=\textstyle\{\omega\in\h:\int\omega w_*\,dx=0\}.$$

We can assume that 
the first eigenfunction $w_*$ is non negative because it minimizes the Rayleigh quotient.
By Remark~2.2 and elliptic regularity theory, $w_*$ is $C^2(\Omega)$.
Suppose that $w_*$ is zero at a point in the interior of $\Omega$.
As $w_*$ has to be positive somewhere, centering a ball at a point where $w_*$ is positive and enlarging it,
one can pick a ball such that $w_*$ is positive in the interior of the ball and zero somewhere on
the boundary of the ball, say $x_0$, with $x_0$ in the interior of $\Omega$.
Applying Lemma 3.4 of \cite{GT}
(note the last assertion, if $w_*(x_0)=0$ the same conclusion holds irrespective of the
sign of the function $c$ in \cite{GT}), we obtain that $\frac{\partial w_*}{\partial \nu}(x_0)<0$ and this leads to $w_*$ negative along the normal direction
to the boundary of the ball, in the exterior of the ball. This contradicts that $w_*$ is nonengative. Hence $w_*$ is positive in $\Omega$.

Combining (\ref{a}) with (\ref{linearized}), we obtain that
$$
\int(f'(u_*)u_*-f(u_*))w_*\,dx=c_*\int hw_*\,dx.
$$
Observe that
\begin{equation}\label{vanish}
 \int h w_*\,dx\neq 0.
\end{equation}
Indeed,
otherwise $f'(u_*)u_*-f(u_*)=0$. This implies that $u_*\leq M$, because of hypotheses {\bf (i)}$-${\bf (iii)} on the function $f$.
In fact, by {\bf (ii)} and {\bf (iii)}, $u\mapsto f'(u)u-f(u)$ is increasing for $u\geq M$. Since the solution of the ordinary
differential equation $f'(u)u-f(u)=0$ is $f(u)=cu$ and $f$ is continuous, $f'(u)u-f(u)$ cannot be zero for $u>M$ unless $M=0$.
But then $f$ would not be differentiable at zero, contradicting {\bf (i)}. So $f'(u_*)u_*-f(u_*)=0$ implies that $u_*\leq M$, as stated above. In this situation
the term $f'(u_*)w_*$ in equation (\ref{linearized}) would vanish and so $a=\lambda_1$,
contrary to our assumption. Therefore, if $c_*>0$, then $\int hw_*\,dx>0$, and if
$c_*<0$, then $\int hw_*\,dx<0$.

We let $\tG:\R\times\rr_{w_*}\times\R\times S\times\R\to L^p(\Omega)\times L^p(\Omega)$ be defined by
\begin{eqnarray*}
\tG(t,y,c,w,\mu)&=&(\Delta\twzy+a\twzy-f\twzy-ch,\\ &&\ \Delta w+aw-f'\twzy w+\mu w).
\end{eqnarray*}
We have that $\tG(t_*,y_*,c_*,w_*,0)=0$. We may use the Implicit Function Theorem to describe the solutions of $\tG=0$
in a neighborhood of $(t_*,y_*,c_*,w_*,0)$. Indeed, at this point,
\begin{eqnarray*}
\tG_yz+\tG_c\gamma+\tG_w\omega+\tG_\mu\nu 
&=&(\Delta z+az-f'\twyz z-\gamma h,\\
&&\ \Delta\omega+a\omega-f'\twyz\omega\\ &&\ \ -f''\twyz zw_*+\nu w_*).
\end{eqnarray*}
If this derivative vanishes, then we get that $\gamma=0$ (multiply by $w_*$ and integrate) and then $z=0$ (because $z\in\rr_{w_*}$ and the first eigenvalue is simple).
Since $z=0$, this implies (using the same argument) that $\nu=0$ and then $\omega=0$ (because $\omega\in\rr_{w_*}$).
The derivative is a homeomorphism from $\rr_{w_*}\times\R\times\rr_{w_*}\times\R$ to $L^p(\Omega)\times L^p(\Omega)$.
So the solutions of $\tG=0$ in a neighborhood of $(t_*,y_*,c_*,w_*,0)$ lie
on a curve $t\mapsto(t,\y(t),\cc(t),\w(t),\mu(t))$. Differentiating $\tG(t,\y(t),\cc(t),\w(t),\mu(t))=(0,0)$
once with respect to $t$,
$$
\Delta z+az-f'\twyz z-\gamma h=-\left(\Delta w_*+aw_*-f'\twyz w_*\right)=0,
$$
$$
\Delta\omega+a\omega-f'\twyz\omega-f''\twyz zw_*+\nu w_*=f''\twyz w_*^2
$$
Here $z=\y'(t_*)$, $\gamma=\cc'(t_*)$, $\omega=\w'(t_*)$ and $\nu=\mu'(t_*)$. Clearly, both $\gamma$ and $z$ vanish.
This implies that
$$
\nu=\frac{\int f''\twyz w_*^3\,dx}{\int w_*^2\,dx}.
$$
As we just saw,
it is impossible for $\max_\Omega\twyz\leq M$. 
Hence,
\begin{equation}\label{nu}
\mu'(t_*)>0.
\end{equation}
Differentiating the first equation in $\tG(t,\y(t),\cc(t),\w(t),\mu(t))=(0,0)$
twice with respect to $t$, at $t_*$,
\begin{eqnarray*}
\Delta z'+az'-f'\twyz z'-\gamma' h-f''\twyz z(w_*+z)=\qquad\\
\qquad\qquad\qquad\qquad\qquad\qquad f''\twyz w_*(w_*+z).
\end{eqnarray*}
Here $z=\y'$.
This can be rewritten as
\begin{eqnarray*}
\Delta z'+az'-f'\twyz z'-\gamma' h&=&f''\twyz (w_*+z)^2\\ &=&f''\twyz w_*^2,
\end{eqnarray*}
as $z(t_*)=0$. Multiplying by $w_*$ and integrating,
$$
\cc''(t_*)=-\,\frac{\int f''\twyz w_*^3\,dx}{\int hw_*\,dx}.
$$
This is formula (2.7) of \cite{OSS1}.
So $\cc''(t_*)$ is negative if $c_*>0$ and positive if $c_*<0$.
Suppose that $c_*>0$ (respectively, $c_*<0$).
As $t$ increases from $t_*$,
$\cc(t)$ decreases (respectively, increases) and the solution becomes stable.
So the ``end" of ${\bm m}^*$ coincides with the piece of curve parametrized by
$t\mapsto(c(t),t\w(t)+\y(t))$, for $t$ in a right neighborhood of $t_*$.
A parametrization of ${\bm m}^\sharp$ is obtained by taking $t$ in a left neighborhood of $t_*$.
\end{proof}
The next proposition guarantees that the degenerate solutions vary smoothly with $a$.
\begin{Prop}\label{thmd}
The set of degenerate solutions $(a,u,c)$ of {\rm (\ref{a})} with Morse index equal to zero
in $]\lambda_1,+\infty[\times\h\times\R$
is the disjoint union of two connected one dimensional manifolds $\D_*^-$ and $\D_*^+$ of class $C^1$.
Each manifold is a graph $\{(a,u_*^-(a),c_*^-(a)):a\in\,]\lambda_1,+\infty[\}$ and
$\{(a,u_*^+(a),c_*^+(a)):a\in\,]\lambda_1,+\infty[\}$.
\end{Prop}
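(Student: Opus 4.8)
The plan is to upgrade the pointwise (in $a$) information from Lemma~\ref{turn} into a global $C^1$ description of the two families of degenerate solutions, by applying the Implicit Function Theorem to the map $\tG$ of Lemma~\ref{turn}, now regarded as a function of $a$ as well. Fix $a_0>\lambda_1$ and let $(a_0,\usp(a_0),\csp(a_0))$ be the degenerate solution with Morse index zero and $\csp(a_0)>0$ whose existence was established before Lemma~\ref{turn}. Write $\usp(a_0)=\ttau\w_*+\y_*$ with $\w_*>0$ solving \eqref{linearized} and $\y_*\in\rr_{\w_*}$. First I would consider the enlarged map $\widehat G:\R\times\R\times\rr_{\w_*}\times\R\times S\times\R\to L^p(\Omega)\times L^p(\Omega)$ obtained from $\tG$ by promoting $a$ to an independent variable, and differentiate with respect to $(t,y,c,w,\mu)$ at the base point $(a_0,\ttau,\y_*,\csp(a_0),\w_*,0)$. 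Compared with the computation in the proof of Lemma~\ref{turn}, the only new ingredient is the derivative in the direction of $t$; but from \eqref{nu} the $\mu$-component picks up the nonzero quantity $\mu'(\ttau)=\int f''\twyz\w_*^3\,dx\big/\int\w_*^2\,dx>0$ (recall $\max_\Omega\twyz>M$, so $f''\twyz\w_*^3$ is positive on a set of positive measure), while the remaining components reproduce the injective-and-surjective operator already analyzed there. Hence the full derivative with respect to $(t,y,c,w,\mu)$ is a homeomorphism from $\R\times\rr_{\w_*}\times\R\times\rr_{\w_*}\times\R$ onto $L^p(\Omega)\times L^p(\Omega)$, and the IFT yields, for $a$ near $a_0$, a $C^1$ curve $a\mapsto(t(a),y(a),c(a),w(a),\mu(a))$ of solutions of $\widehat G=0$. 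The zero set $\mu(a)=0$ is exactly the set of degenerate solutions; to keep only those I would instead apply the IFT to the $2\times2$ system "$\tG=0$ and $\mu=0$" solved for $(t,y,c,w,\mu)$ in terms of $a$, which is legitimate precisely because the extra equation $\mu=0$ is transverse by \eqref{nu}. This produces $C^1$ functions $a\mapsto(\usp(a),\csp(a))$ near $a_0$ with $\usp(a)$ still of the form $t(a)w(a)+y(a)$, $w(a)>0$, and $\csp(a)>0$, giving a local $C^1$ graph of degenerate Morse-index-zero solutions through $(a_0,\usp(a_0),\csp(a_0))$; the same argument with $\csp(a_0)<0$ handles the negative branch.

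Next I would show these local graphs patch together into two global graphs over all of $]\lambda_1,+\infty[$. Let $\D_*^+$ be the connected component, through $(a_0,\usp(a_0),\csp(a_0))$, of the set of degenerate solutions with Morse index zero and $c>0$; the local analysis shows $\D_*^+$ is a $C^1$ one-dimensional manifold, and since along it $c$ stays away from $0$ (by \eqref{vanish}, a degenerate Morse-index-zero solution with $c=0$ would force $a=\lambda_1$) and the projection to $a$ is a local diffeomorphism, $\D_*^+$ is an open-and-closed subset of a graph $\{(a,\usp(a),\csp(a))\}$ over an open subinterval of $]\lambda_1,+\infty[$. It remains to see this interval is all of $]\lambda_1,+\infty[$, i.e. the graph cannot terminate. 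Suppose $a_n\to a_\infty\in\,]\lambda_1,+\infty[$ along $\D_*^+$. By Proposition~\ref{ccota} applied on $[\lambda_1,a_\infty+1]$, the values $\csp(a_n)$ stay bounded (and bounded away from $0$), so by Lemma~\ref{continua}, modulo a subsequence $\usp(a_n)\to u_\infty$ in $\h$ and $\csp(a_n)\to c_\infty>0$, with $(a_\infty,u_\infty,c_\infty)$ a solution of \eqref{a}. The first eigenfunctions $\w_n$ of \eqref{linearized} at $\usp(a_n)$, normalized in $S$, are then bounded in $\h$ and converge (subsequence) to some $\w_\infty\ge0$ in $S$ solving the linearized equation at $u_\infty$ with eigenvalue $0$; by the Hopf-type argument in the proof of Lemma~\ref{turn}, $\w_\infty>0$, so $(a_\infty,u_\infty,c_\infty)$ is again a degenerate solution with Morse index zero and positive $c$, i.e. lies on $\D_*^+$. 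Thus $\D_*^+$ is closed in $]\lambda_1,+\infty[\times\h\times\R$, and being also open (in the set of such degenerate solutions, by the local IFT description) and the base interval being connected, $\D_*^+$ is a graph over the whole of $]\lambda_1,+\infty[$.

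Finally I would verify that $\D_*^-$ and $\D_*^+$ exhaust the degenerate Morse-index-zero solutions and are disjoint. Disjointness is immediate since $\csp>0$ on $\D_*^+$ and $\csm<0$ on $\D_*^-$. For exhaustion: any degenerate solution $(a,u,c)$ with Morse index zero has $a>\lambda_1$ and, by \eqref{vanish}, $c\neq0$; running the IFT argument above at this point shows a neighborhood of it within the degenerate-Morse-index-zero set is a $C^1$ graph over $a$, so the connected component through it is such a graph, which by the closedness argument extends over all of $]\lambda_1,+\infty[$ and, having $c>0$ or $c<0$ throughout, coincides with $\D_*^+$ or $\D_*^-$; in particular its value at $a_0$ is one of the finitely-many degenerate Morse-index-zero solutions at $a_0$—and here one uses that at each fixed $a$ the turning points produced by following the branch from $(a,u_\dagger(a),0)$ are, by the local picture of Lemma~\ref{turn}, isolated, so uniqueness of $\usp(a),\csp(a)$ (resp. $\usm(a),\csm(a)$) pins down exactly two such components. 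The main obstacle I anticipate is precisely this last point—showing there are not more than two Morse-index-zero degenerate solutions at a given $a$, equivalently that $\D_*^\pm$ capture \emph{all} of them rather than just two distinguished ones; the honest way around it is to note that the statement as written only asserts the degenerate Morse-index-zero set \emph{is} the union of $\D_*^-$ and $\D_*^+$, so one does need the a~priori sign dichotomy of $c$ (from \eqref{vanish}) together with the fact that, for each sign of $c$, the component is a graph over the connected interval $]\lambda_1,+\infty[$; combined with the observation that two graphs over the same interval that agree at one parameter value coincide, this forces exactly the two manifolds claimed.
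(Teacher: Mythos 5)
Your first two steps (continuation of a degenerate Morse-index-zero solution in the parameter $a$ via the Implicit Function Theorem applied to the augmented system ``$\tG=0$, $\mu=0$'', using the transversality \eqref{nu} and the nonvanishing \eqref{vanish}, plus a compactness argument via Proposition~\ref{ccota} and Lemma~\ref{continua} to show the maximal interval of continuation is all of $]\lambda_1,+\infty[$) are essentially the paper's route: its proof invokes \eqref{vanish} and the continuation argument of Theorem~3.1 of \cite{PG} for exactly this purpose, so that part is fine (modulo the minor point that Proposition~\ref{ccota} is only stated for $a\leq a_2<\lambda_3$, and that ``$\csp(a_n)$ bounded away from $0$'' needs the \eqref{vanish}-type argument at the limit point, which you could add in one line).

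The genuine gap is the exhaustion step, which you yourself flag and then do not close. The statement asserts that the degenerate Morse-index-zero set \emph{equals} $\D_*^-\cup\D_*^+$, so you must exclude a third (or fourth, \dots) component; your proposed way out --- ``two graphs over the same interval that agree at one parameter value coincide'' --- is vacuous here, because two \emph{distinct} components are disjoint by definition and never agree at any parameter value, and your appeal to ``uniqueness of $\usp(a),\csp(a)$'' is circular: the construction preceding Lemma~\ref{turn} only produced \emph{at least} two degenerate solutions at each $a$, and exactly-two is precisely what is to be proved. The paper closes this with a global argument you are missing: if at some $a$ there were an additional degenerate solution with Morse index zero, then by Lemma~\ref{turn} it is a turning point carrying an adjacent branch of \emph{stable} solutions; since stable solutions are nondegenerate away from such turning points (and the local picture of Lemma~\ref{turn} shows both local branches lie on one side in $c$, so the stable branch cannot run into another Morse-index-zero degeneracy on its way), this branch can be continued in the parameter $c$ all the way to $c=0$, producing a stable solution of \eqref{b} there; but by Lemma~\ref{thmcz} the only stable solution at $c=0$ is $u_\dagger(a)$, whose branch already terminates at $(a,\usm(a),\csm(a))$ and $(a,\usp(a),\csp(a))$, a contradiction. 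Without this (or some substitute) your argument establishes that the degenerate Morse-index-zero set is a disjoint union of $C^1$ graphs over $]\lambda_1,+\infty[$, one with $c>0$ and one with $c<0$ among them, but not that there are only two.
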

\begin{proof}[Sketch of the proof]
To prove that the degenerate solutions can be followed using the parameter $a$, we
apply (\ref{vanish}) and the argument in the proof of Theorem~3.1 in \cite{PG}. On the other hand,
suppose that there were more than the two degenerate solutions with Morse index equal to zero,
$(a,\usm(a),\csm(a))$ and $(a,\usp(a),\csp(a))$, for each value of $a$.
Then, because of Lemma~\ref{turn}, each additional degenerate
solution would give rise to a branch of stable solutions, which could be followed using the parameter $c$,
to $c=0$. However, from Lemma~\ref{thmcz}, at $c=0$ there exists only one stable solution $(a,u_\dagger(a),0)$.
This would yield a contradiction.
\end{proof}

We restrict our attention to $\lambda_1<a<\lambda_2$. We observe that there are no degenerate solutions with
Morse index greater than zero. Otherwise, we would have that the integral
$
\int\left[|\nabla v|^2-av^2+f'(u)v^2\right]\,dx,
$
and hence
$
\int\left[|\nabla v|^2-av^2\right]\,dx
$,
is nonpositive on a two dimensional subspace of ${\cal H}$, which is not possible for $a<\lambda_2$.
This observation and the above results lead to
\begin{Thm}\label{stab}
 Let $\lambda_1<a<\lambda_2$. The set of solutions of\/ {\rm (\ref{a})} is
a compact connected one dimensional manifold in $\{a\}\times\h\times\R$.
There exist precisely two solutions for each $c\in\,]\csm,\csp[$, one stable, $(a,u^*_a(c),c)$,
 and the other nondegenerate with Morse index
 equal to one, $(a, u^\sharp_a(c),c)$. In addition, there exists exactly one
degenerate solution with Morse index equal to zero when $c=\csm$ and $c=\csp$.
\end{Thm}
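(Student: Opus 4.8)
The plan is to assemble Theorem~\ref{stab} from the pieces already established, filling in a continuation/uniqueness argument. I would start from the branch through the stable solution $(a,u_\dagger(a),0)$ constructed just before Lemma~\ref{turn}: keeping $a\in\,]\lambda_1,\lambda_2[$ fixed, the Implicit Function Theorem (using nondegeneracy of $(a,u_\dagger(a),0)$, which holds since for $\lambda_1<a<\lambda_2$ there are no degenerate solutions of positive Morse index, and $u_\dagger(a)$ is stable) lets us follow solutions with $c$ as parameter in both directions. By Lemma~\ref{continua} the branch stays bounded in $\h$, and by Proposition~\ref{ccota} it cannot be continued for all $c$; so in each direction it must reach a degenerate solution. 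Since $\lambda_1<a<\lambda_2$ forbids Morse index $\geq 2$ (the observation preceding the theorem, via the two-dimensional negativity of $\int(|\nabla v|^2-av^2)$), any degenerate solution encountered has Morse index exactly zero, and by Proposition~\ref{thmd} there are precisely two of them, $(a,\usm(a),\csm(a))$ with $\csm(a)<0$ and $(a,\usp(a),\csp(a))$ with $\csp(a)>0$. So decreasing $c$ from $0$ the branch terminates at $\csm(a)$ and increasing $c$ it terminates at $\csp(a)$.

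Next I would apply Lemma~\ref{turn} at each of these endpoints. Near $(\usp(a),\csp(a))$ the local solution set is ${\bm m}^\sharp\cup\{{\bm p}_*\}\cup{\bm m}^*$: the stable sheet ${\bm m}^*$ is the incoming branch we followed from $c=0$ (it is the unique stable branch near the turning point and it limits onto ${\bm p}_*$ as $c\nearrow\csp(a)$), and ${\bm m}^\sharp$ is a graph of nondegenerate solutions of Morse index one over $c\in\,]\csp(a)-\eps_*,\csp(a)[$. Thus the branch ``turns back'': continuing past the fold, one obtains a second solution for each $c$ slightly below $\csp(a)$, of Morse index one. The same happens at $\csm(a)$. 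Following the Morse-index-one sheet and again invoking Lemma~\ref{continua} and Proposition~\ref{ccota}, it too must terminate at a degenerate Morse-index-zero solution — but by Proposition~\ref{thmd} the only such solutions are the two endpoints already in hand, so the index-one sheet runs from $\csm(a)$ to $\csp(a)$. Hence the whole connected component is a single closed arc: the stable arc over $[\csm(a),\csp(a)]$ joined at its two ends, through the folds, to the index-one arc over $[\csm(a),\csp(a)]$. Compactness follows from Lemma~\ref{continua} together with the a priori bound on $c$ from Proposition~\ref{ccota} and the upper bound on $u$ from Remark~\ref{rmk}.

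Finally I would argue there is no other component. Any solution lies on a branch that can be continued in $c$ by the Implicit Function Theorem wherever it is nondegenerate; a nondegenerate solution for $\lambda_1<a<\lambda_2$ has Morse index zero or one. A Morse-index-zero (stable) solution can be continued toward $c=0$ staying bounded (Lemma~\ref{continua}) and staying stable until it either reaches $c=0$ — where by Lemma~\ref{thmcz} the only stable solution is $(a,u_\dagger(a),0)$, putting it on our component — or reaches a degenerate point, necessarily one of the two endpoints of Proposition~\ref{thmd}, again our component. A Morse-index-one solution, continued, must likewise run into one of those two degenerate points. So every solution lies on the component already described, giving exactly the stated count: two solutions for each $c\in\,]\csm(a),\csp(a)[$ (one stable, one of Morse index one) and exactly one — the degenerate fold point — at each of $c=\csm(a)$ and $c=\csp(a)$, and none outside $[\csm(a),\csp(a)]$.

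The main obstacle is the global bookkeeping in the continuation argument: one must be sure that following the index-one sheet from one fold does not produce a \emph{third} fold or escape to infinity, and that the two folds of Proposition~\ref{thmd} are exactly the two ends of the single arc rather than, say, endpoints of two separate loops. This is precisely where Proposition~\ref{ccota} (no large $|c|$), Lemma~\ref{continua} (no blow-up in $\h$), Proposition~\ref{thmd} (exactly two degenerate solutions, each a graph over $a$), and Lemma~\ref{thmcz} (uniqueness of the stable solution at $c=0$) have to be combined carefully; the Morse-index dichotomy for $a<\lambda_2$ is what keeps the local picture of Lemma~\ref{turn} applicable at every degenerate point encountered.
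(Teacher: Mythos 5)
Your proposal is correct and follows essentially the same route as the paper, which states Theorem~\ref{stab} as a direct consequence of the continuation argument from $(a,u_\dagger(a),0)$, Lemma~\ref{continua}, Proposition~\ref{ccota}, Lemma~\ref{turn}, Proposition~\ref{thmd}, and the observation that below $\lambda_2$ all degenerate solutions have Morse index zero. Your explicit bookkeeping (ruling out extra components or extra folds by continuing any putative solution to $c=0$ or to one of the two degenerate points) is exactly the assembly the paper intends, mirroring the uniqueness argument already used in the proof of Proposition~\ref{thmd}.
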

In the next results, we consider the case $M>0$.
Recall the definitions of $\Lambda$ in (\ref{Lambda}) and of $T$ in (\ref{T}).
In parallel to Lemma~\ref{proj}, we can prove
\begin{Lem}
Suppose $M>0$. For $0<\hat t<T$, define
$$
\Lambda_{\hat t}=\Lambda \cap \{(t,c)\in \R^2 : \, \hat t\leq t\}\quad { \it and }\quad
\Lambda_{\hat t}^C=\{(t,c)\in \R^2: \, t\geq 0\}\setminus \Lambda_{\hat t}
$$
(see {\rm Figure~\ref{fig13}}).
There exists $\delta>0$ such that for all $\lambda_1<a<\lambda_1+\delta$ and $(a,u,c)$ solution of\/~{\rm (\ref{a})},
we have $(t,c)\in \Lambda_{\hat t}^C$.
\end{Lem}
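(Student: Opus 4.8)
The plan is to follow the contradiction scheme of Lemma~\ref{proj}, with the roles of $\lambda_1^-$ and $\lambda_1^+$ reversed and with the relevant sign information coming from equation~(\ref{sinalt}) in the direction $a>\lambda_1$. Suppose the conclusion fails: there exist $a_n\searrow\lambda_1$ and solutions $(a_n,u_n,c_n)$ of~(\ref{a}) with $(t_n,c_n)\in\Lambda_{\hat t}$, i.e.\ $t_n\geq\hat t>0$. First I would establish a priori bounds: since $t_n\geq\hat t>0$, equation~(\ref{sinalt}) with $u=u_n$ forces $\int f(u_n)\phi\,dx=(a_n-\lambda_1)t_n\int\phi^2\,dx\to 0$; combined with the fact (Remark~\ref{rmk}, once $c_n$ is shown bounded) that $u_n$ is bounded above, this drives the competition term to vanish in the limit. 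To bound $(c_n)$ one can argue as in the first part of the proof of Proposition~\ref{ccota}: were $|c_n|\to\infty$, rescaling $v_n=u_n/(sc_n)$ and passing to the limit would produce a nontrivial solution of $\Delta v+\lambda_1 v - sh=0$ with $v\leq 0$, and testing against $\phi$ using {\bf (b)} would give a contradiction (the component equation becomes $0=\int f_\infty\phi\,dx$ with $f_\infty\geq 0$ while the left side is forced to vanish, killing both $v$ and $h$). Hence $(c_n)$ is bounded, and then so is $(u_n)$ from above.

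Next I would extract convergence. Writing $u_n=t_n\phi+y_n$ with $y_n\in\rr$, the same energy estimate as in Lemma~\ref{proj} — multiply~(\ref{a}) by $y_n$, use the decomposition~(\ref{padepois}) to discard the (nonpositive) competition contribution, and invoke $\int|\nabla y_n|^2\,dx\geq\lambda_2\int y_n^2\,dx$ — shows $(y_n)$ is bounded in $H^1_0(\Omega)$. Passing to a subsequence, $c_n\to c_0$, $t_n\to t_0\geq\hat t>0$, $y_n\to y_0$ in $L^2$ and a.e.; since $u_n$ is bounded above and $f$ depends only on the positive part, Dominated Convergence gives $f(u_n)\to f(u_0)$ in $L^p$, and elliptic regularity upgrades $y_n\to y_0$ in $\h$. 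Thus $(\lambda_1,u_0,c_0)$ with $u_0=t_0\phi+y_0$ solves~(\ref{a}) at $a=\lambda_1$. But at $a=\lambda_1$ the analysis of Section~2 shows every solution has $f(u_0)\equiv 0$, so $u_0=t_0\phi+c_0\dlmu h$ and $(t_0,c_0)\in\Lambda$; since $t_0\geq\hat t$, in fact $(t_0,c_0)\in\Lambda_{\hat t}$.

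Finally I would apply Lemma~\ref{TFI} at $(t_0,c_0)$, which is legitimate because $t_0\geq\hat t>0\neq 0$: solutions of~(\ref{a}) near the image of $(t_0,c_0)$ form a $C^1$ manifold parametrized by $(t,c)\mapsto(a(t,c),t\phi+y(t,c),c)$, and for $t>0$ one has $a(t,c)\leq\lambda_1$ whenever $(t,c)\in\Lambda$ — more precisely, for $(t,c)\in\Lambda$ with $t>0$ one has the honest solution $(\lambda_1,t\phi+c\dlmu h,c)$, so $a(t,c)=\lambda_1$ exactly on $\Lambda\cap\{t>0\}$, while the sign relation~(\ref{sinalt}) forbids $a(t,c)>\lambda_1$ for $t>0$ once $f\equiv 0$ persists. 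For $n$ large, $(t_n,c_n)$ lies in the parametrizing neighborhood $V$, and $(t_n,c_n)\in\Lambda_{\hat t}\subset\Lambda$ with $t_n>0$ forces $a_n=a(t_n,c_n)\leq\lambda_1$, contradicting $a_n>\lambda_1$. This yields the claim. The main obstacle is the a priori bound on $(c_n)$: the energy estimate alone does not control $c_n$, so one genuinely needs the blow-up/rescaling argument of Proposition~\ref{ccota} adapted to a neighborhood of $\lambda_1$, and one must check that the limiting rescaled function is nonpositive (using $u_n$ bounded above together with the sign of the competition term) in order to close the orthogonality contradiction against $\phi$.
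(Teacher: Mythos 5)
Your overall scheme coincides with the paper's intended proof: argue by contradiction along $a_n\searrow\lambda_1$ with $(t_n,c_n)\in\Lambda_{\hat t}$, bound $(c_n)$ (Proposition~\ref{ccota}), bound $(u_n)$ above (Remark~\ref{rmk}), run the energy estimate for $y_n=u_n-t_n\phi$, pass to the limit to land on a solution at $a=\lambda_1$, i.e.\ a point $(t_0,c_0)\in\Lambda$ with $t_0\geq\hat t>0$, and then contradict the uniqueness of the local parametrization of Lemma~\ref{TFI}: the honest solutions $(\lambda_1,t_n\phi+c_n\dlmu h,c_n)$ force $a(t_n,c_n)=\lambda_1$, while the assumed solutions force $a(t_n,c_n)=a_n>\lambda_1$. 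Two corrections are needed, however. First, the sentence ``use the decomposition (\ref{padepois}) to discard the (nonpositive) competition contribution'' is wrong in this regime, and it is precisely the point where this lemma differs from Lemma~\ref{proj}: with $t_n\geq\hat t>0$ the last term $t_n\int f(t_n\phi+y_n)\phi\,dx$ in (\ref{padepois}) is nonnegative, so it cannot be dropped by sign. The paper's proof instead uses that this term is bounded, because $(u_n)$ is uniformly bounded above by Remark~\ref{rmk} (once $(c_n)$ is bounded). Your own earlier observation from (\ref{sinalt}), namely $\int f(u_n)\phi\,dx=(a_n-\lambda_1)t_n\int\phi^2\,dx\to0$ since $\hat t\leq t_n\leq T$, supplies an equally good bound — but then the ``nonpositive, discard'' claim must be deleted rather than kept alongside it.

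Second, re-deriving the bound on $(c_n)$ is unnecessary: every $a_n$ lies in $J=\,]\lambda_1,a_2]$, so Proposition~\ref{ccota} applies verbatim. As you sketch the adaptation it is also slightly gapped: in the limit $a=\lambda_1$ the term $(a-\lambda_1)\int v\phi\,dx$ vanishes identically, so testing with $\phi$ kills $f_\infty$ but not $v$; to conclude $v\equiv0$ (and hence reach the contradiction with $h\neq0$) you would additionally need that the $\phi$-component of $v_n$ tends to zero, which holds because $t_n\in[\hat t,T]$ is bounded while $|c_n|\to\infty$. Finally, a small slip: Lemma~\ref{TFI} gives $a(t,c)\geq\lambda_1$ for $t>0$, not $\leq\lambda_1$ as you write; this does not damage your argument, whose real content is that $a(t,c)=\lambda_1$ at points of $\Lambda$ near $(t_0,c_0)$ by uniqueness of the parametrization, which already contradicts $a_n>\lambda_1$.
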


\begin{figure}
\centering
\begin{psfrags}
\psfrag{c}{{\tiny $c$}}
\psfrag{b}{{\tiny $\tz$}}
\psfrag{t}{{\tiny $t$}}
\psfrag{s}{{\tiny $\Lambda_{\hat t}$}}
\psfrag{u}{{\tiny $\Lambda_{\hat t}^C$}}
\includegraphics[scale=.7]{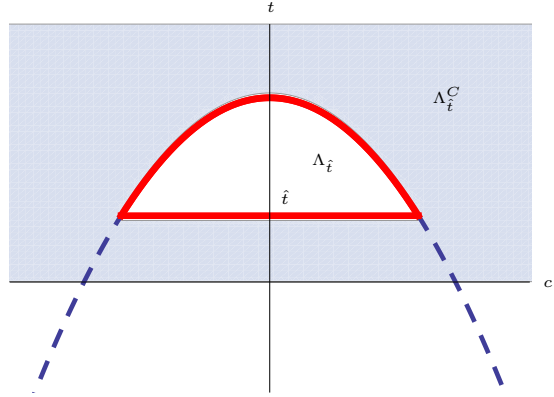}
\end{psfrags}
\caption{The regions $\Lambda_{\hat t}$ and $\Lambda_{\hat t}^C$.}\label{fig13}
\end{figure}

The proof is analogous to the one of Lemma~\ref{proj}, but here we use Proposition~\ref{ccota} to guarantee
that $c$ is bounded and the fact that the last term in (\ref{padepois}) is bounded,
since $(u_n)$ is uniformly bounded above, according to Remark~\ref{rmk}.

We examine the behavior of ${\cal D}_*^-$ and ${\cal D}_*^+$ as $a$ decreases to $\lambda_1$.
\begin{Prop}\label{dia}
 Suppose $M>0$. As $a$ decreases to $\lambda_1$, we have
\begin{eqnarray}
\lim_{a\searrow\lambda_1}(a,\usm(a),\csm(a))&=&(\lambda_1,0\,\phi+\cm\dlmu h, \cm),\label{cm}\\
\lim_{a\searrow\lambda_1}(a,\usp(a),\csp(a))&=&(\lambda_1,0\,\phi+\cp\dlmu h, \cp),\label{cp}
\end{eqnarray}
i.e.\
$$
\lim_{a\searrow\lambda_1}(\tsm(a),\csm(a))=(0,\cm),\quad \lim_{a\searrow\lambda_1}(\tsp(a),\csp(a))=(0,\cp),
$$
where $t_*^\pm(a):=\frac{\int u_*^\pm(a)\phi\,dx}{\int\phi^2\,dx}$ and $\cm$ and $\cp$ are given in\/ {\rm (\ref{cc})}.
\end{Prop}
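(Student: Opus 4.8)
The plan is to argue by contradiction, following the blueprint of Theorem~\ref{conv} but now working with the degenerate-solution manifolds $\D_*^-$ and $\D_*^+$ of Proposition~\ref{thmd} rather than with the full branch. Suppose \eqref{cm} fails: there is a sequence $a_n\searrow\lambda_1$ such that $(a_n,\usm(a_n),\csm(a_n))$ does not converge to $(\lambda_1,\cm\dlmu h,\cm)$. By Proposition~\ref{ccota}, the values $c_n:=\csm(a_n)$ are bounded, so modulo a subsequence $c_n\to c_0\le 0$ (the sign because each $\D_*^-$ solution has $c<0$, or rather $c_n\le 0$ in the limit). Remark~\ref{rmk} then gives that $\usm(a_n)$ is bounded above, and the now-standard $H^1_0$-boundedness argument (as in Lemma~\ref{proj} or Lemma~\ref{continua}) together with elliptic regularity yields, along a further subsequence, $\usm(a_n)\to u_0$ in $\h$, with $(\lambda_1,u_0,c_0)$ a solution of~\eqref{a}. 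As shown in Section~2, every solution at $a=\lambda_1$ has $f(u_0)\equiv 0$, so $u_0=t_0\phi+c_0\dlmu h$ for some $t_0$ with $(t_0,c_0)\in\Lambda$.

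The next step is to pin down $t_0=0$ and $c_0=\cm$. For the $t$-component, write $t_n:=\tsm(a_n)$; the identity \eqref{sinalt} applied to $u=\usm(a_n)$ gives $(a_n-\lambda_1)t_n\int\phi^2\,dx=\int f(\usm(a_n))\phi\,dx\ge 0$, so $t_n\ge 0$, hence $t_0\ge 0$. For the reverse inequality I would use the Lemma displayed just before this proposition (the $\Lambda_{\hat t}^C$ lemma): fixing any $\hat t\in\,]0,T[$, for $a$ close enough to $\lambda_1$ every solution has $(t,c)\in\Lambda_{\hat t}^C$, and since $c_n$ stays bounded, letting $n\to\infty$ forces $t_0\le\hat t$; as $\hat t$ is arbitrarily small, $t_0=0$. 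Thus $u_0=c_0\dlmu h$, i.e. the limit lies in the flat bottom. It remains to identify $c_0=\cm$, and here is where the Morse-index/degeneracy information enters: the limit solution $(\lambda_1,c_0\dlmu h,c_0)$ must be, in an appropriate sense, the ``edge'' of the set $\Lambda\cap\{t\ge 0\}$ seen from below, i.e. $(0,c_0)$ must lie on the lower boundary arc $\clm$ of $\Lambda$, which by \eqref{cc} means $c_0=\clm(0)=\cm$. To make this rigorous I would invoke Lemma~\ref{turn} together with Lemma~\ref{TFI}: near any point $(0,c)$ with $\cm<c$ (so that $(0,c)$ is in the interior of the slice $\Lambda\cap\{t\ge 0, c\ge\cm\}$, and $t_0\ne0$ fails so we instead use the $a=\lambda_1$ description directly) one checks the degenerate branch $\D_*^-$ cannot accumulate, because a degenerate Morse-index-zero solution limiting there would, by the turning behaviour in Lemma~\ref{turn} and the smooth dependence on $a$ of Proposition~\ref{thmd}, contradict either Theorem~\ref{stab} (exactly one degenerate solution at each end $\csm,\csp$) or the convergence already established in Theorem~\ref{conv}/Proposition~\ref{dia}'s companion for the full branch.

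Once $(t_0,c_0)=(0,\cm)$ is established, the contradiction is immediate: along the subsequence, $(a_n,\usm(a_n),\csm(a_n))\to(\lambda_1,\cm\dlmu h,\cm)$, contradicting the assumed failure of \eqref{cm}. Since every subsequence has a sub-subsequence converging to this same limit, the full sequence converges, proving \eqref{cm}; the statement \eqref{cp} for $\D_*^+$ is proved by the symmetric argument, using $c_n=\csp(a_n)\ge 0$, the upper boundary arc $\clp$, and $\cp=\clp(0)$. The reformulation in terms of $(\tsm,\csm)$ and $(\tsp,\csp)$ then follows by continuity of the map $u\mapsto\int u\phi\,dx/\int\phi^2\,dx$ on $\h$.

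\textbf{Main obstacle.} The delicate point is the identification $c_0=\cm$ (resp. $c_0=\cp$): boundedness and compactness only give convergence to \emph{some} solution in the flat bottom $\{(\lambda_1,c\dlmu h,c):(0,c)\in\Lambda\}$, and one must rule out that $\csm(a_n)$ drifts to a value strictly between $\cm$ and $0$. This requires genuinely using that we are tracking the \emph{degenerate, Morse-index-zero} solutions and not an arbitrary branch — concretely, that the limiting configuration is forced to sit at the corner where the stable and Morse-index-one sheets meet, which in the $a=\lambda_1$ picture is precisely the boundary arc $\clm$. Combining the local turning description of Lemma~\ref{turn}, the global smooth dependence of Proposition~\ref{thmd}, and the uniqueness in Theorem~\ref{stab} to close this off cleanly is the heart of the argument; everything else is the routine compactness machinery already deployed several times in Sections~2–4.
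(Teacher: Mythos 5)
Your compactness skeleton (Proposition~\ref{ccota}, Remark~\ref{rmk}, Lemma~\ref{continua}, the Section~2 description of solutions at $a=\lambda_1$, and nonnegativity of $t_0$ via \eqref{sinalt}) matches the paper, but the two steps that carry the actual content of the proposition are not sound. First, the deduction ``$(t_n,c_n)\in\Lambda_{\hat t}^C$ and $c_n$ bounded $\Rightarrow t_0\le\hat t$'' fails: $\Lambda_{\hat t}^C$ only excludes solutions whose coordinates lie \emph{inside} $\Lambda$ with $t\ge\hat t$, and nothing prevents a sequence lying outside $\Lambda$ from accumulating on a boundary point $(t_0,\clm(t_0))$ or $(t_0,\clp(t_0))$ with $t_0>\hat t$ (indeed, by Theorem~\ref{convl1} the stable solutions for $a$ slightly above $\lambda_1$ do approach boundary points with $t$ near $T>0$, so no lemma of this type can force $t_0=0$ for an unspecified sequence of solutions). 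Second, the identification $c_0=\cm$ (resp.\ $\cp$), which you yourself flag as the heart of the matter, is never actually carried out: the appeal to Lemma~\ref{turn}, Proposition~\ref{thmd} and ``Proposition~\ref{dia}'s companion'' is vague, Lemma~\ref{TFI} cannot be applied at $(0,c)$ since it requires $t_0\neq 0$, Theorem~\ref{conv} concerns $a\nearrow\lambda_1$ only, and Theorem~\ref{convl1} is downstream of the present proposition, so invoking it is circular.

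The paper closes the gap by a different ordering of the argument, and the key device is precisely the one your sketch never deploys. One does not prove $t_0=0$ first; instead, since $(t_0,c_0)\in\Lambda$, $t_0\ge 0$ and $\clp$ is strictly decreasing, one gets $c_0=\csp(\lambda_1)\le\clp(0)$, and then rules out strict inequality as follows: if $\delta=\clp(0)-\csp(\lambda_1)>0$, choose a boundary point $(t_0',c_0')\in\partial\Lambda$ with small $t_0'>0$ and $c_0'>\clp(0)-\delta/2$, and apply Lemma~\ref{TFI} \emph{there} (legitimate since $t_0'\neq 0$). The local chart $(t,c)\mapsto(a(t,c),t\phi+y(t,c),c)$ contains points with $t>0$ just outside $\Lambda$, which produce solutions with $a>\lambda_1$ arbitrarily close to $\lambda_1$ and $c$ close to $c_0'>\csp(\lambda_1)+\delta/2$; since by Theorem~\ref{stab} every solution at such $a$ satisfies $c\le\csp(a)<\csp(\lambda_1)+\delta/2$, this is a contradiction. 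Then $c_0=\clp(0)$, and $t_0=0$ follows for free because $(t,\clp(0))\in\Lambda$ with $t\ge 0$ forces $t=0$. Without this ``create solutions with too-large $c$ near a boundary point with positive $t$'' step (or some substitute that genuinely uses the maximality of $\csp(a)$), your proof does not exclude that $\csm(a_n),\csp(a_n)$ drift to values strictly between $\cm$ (resp.\ $\cp$) and $0$, nor that the limit sits on $\partial\Lambda$ with $t_0>0$.
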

\begin{proof}
 By Proposition~\ref{ccota}, $\csp(a)$ is bounded. By Lemma~\ref{continua}, we may assume, as $a\searrow\lambda_1$,
that $(a,\usp(a),\csp(a))$ converges,
say to $(\lambda_1,\usp(\lambda_1),\csp(\lambda_1))$, a solution of (\ref{a}). From equality (\ref{sinalt}),
$\tsp(\lambda_1):=\frac{\int \usp(\lambda_1)\phi\,dx}{\int\phi^2\,dx}$ is nonnegative. It is enough to prove that $\csp(\lambda_1)=\cp=\clp(0)$
because if $(t,\clp(0))\in\Lambda$ and $t\geq 0$, then $t=0$.
Since $\tsp(\lambda_1)\geq 0$ and $\csp$ is strictly decreasing, $\csp(\lambda_1)\leq \clp(0)$. Suppose, by contradiction, that $\delta=\clp(0)-\csp(\lambda_1)>0$
(see Figure~\ref{fig14}).
According to the definition of $\csp(\lambda_1)$, there exists $\eps>0$ such that for all $\lambda_1<a<\lambda_1+\eps$,
we have that $\csp(a)<\csp(\lambda_1)+\delta/2$. Lemma~\ref{turn} implies that for all $\lambda_1<a<\lambda_1+\eps$ and $(a,u,c)$
solution of (\ref{a}), $c\leq\csp(a)<\csp(\lambda_1)+\delta/2$. Now choose $(t_0,c_0)\in \partial\Lambda$, with $t_0>0$ and
$c_0>\clp(0)-\delta/2$. Applying Lemma~\ref{TFI} at $(t_0,c_0)$, we obtain  solutions in a neighborhood of $(t_0,c_0)$
corresponding to values of $a$ close to $\lambda_1$. We reach a contradiction to $\csp(a)<\csp(\lambda_1)+\delta/2$
for $a$ in a right neighborhood of $\lambda_1$. This proves $\csp(\lambda_1)=\clp(0)$.
\end{proof}
\begin{figure}
\centering
\begin{psfrags}
\psfrag{c}{{\tiny $c$}}
\psfrag{e}{{\tiny $\ (t_*^+(\lambda_1),\csp(\lambda_1))$}}
\psfrag{b}{{\tiny $(t,\csp(\lambda_1)+\delta/2)$}}
\psfrag{d}{{\tiny $=(t,\clp(0)-\delta/2)$}}
\psfrag{t}{{\tiny $t$}}
\psfrag{T}{{\tiny $(0,\clp(0))$}}
\psfrag{Q}{{\tiny $(t_0,c_0$)}}
\includegraphics[scale=.65]{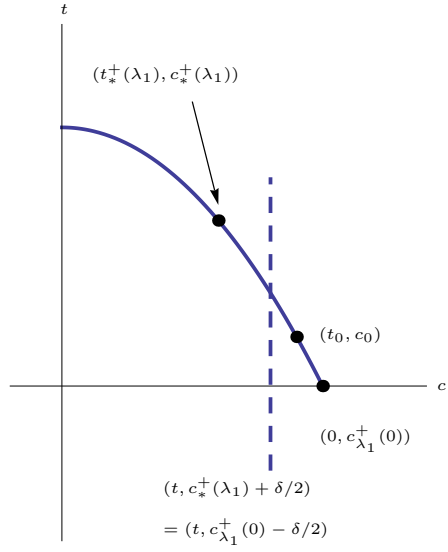}
\end{psfrags}
\caption{Illustration for the proof of Proposition~\ref{dia}.}\label{fig14}
\end{figure}

To conclude the analysis of the case $M>0$,
 we state a result on uniform convergence of the curves of solutions,
for fixed $\lambda_1<a<\lambda_2$,
 as $a$ decreases to $\lambda_1$. We take into account that the set of values $c$ for which there exists a solution, $[\csm(a),\csp(a)]$, in general
depends on $a$. Of course, due to (\ref{cm}) and (\ref{cp}),
for each $\cm<c<\cp$ there exist solutions $(a,u,c)$ for that value of $c$ and for $a$ in a right neighborhood of $\lambda_1$,
while for each $c<\cm$ and $c>\cp$ there do not exist solutions $(a,u,c)$ for that value of $c$ and for $a$ in a right neighborhood of $\lambda_1$.
\begin{Thm}\label{convl1}
Suppose $M>0$ and let $T$ be as in\/ {\rm (\ref{T})}. When $\cm\leq c\leq\cp$, define $t_c$ by
$$
t_c=\left\{\begin{array}{ll}
(\clm)^{-1}(c)&{\rm if }\ c<\clm(T),\\
T&{\rm if }\ \clm(T)\leq c\leq\clp(T),\\
(\clp)^{-1}(c)&{\rm if }\ c>\clp(T).
          \end{array}\right.
$$
For all $\delta>0$, there exists $\eps>0$ satisfying for all $\lambda_1<a<\lambda_1+\eps$ if the value
$c$ is such that there exists a solution $(a,u_a(c),c)$ then, in the case $c<\cm$ we have
\begin{eqnarray*}
 \|u_a(c)-(0\,\phi+\cm\dlmu h)\|_{\h}&<&\delta,
\end{eqnarray*}
and in the case $c>\cp$ we have
\begin{eqnarray*}
 \|u_a(c)-(0\,\phi+\cp\dlmu h)\|_{\h}&<&\delta;
\end{eqnarray*}
 if the value $c$ is such that there exist
solutions $(a,u^*_a(c),c)$ and $(a,u^\sharp_a(c),c)$
then, in the case $\cm\leq c\leq\cp$, we have
\begin{eqnarray*}
 \|u^*_a(c)-(t_c\phi+c\dlmu h)\|_{\h}&<&\delta,\\
 \|u^\sharp_a(c)-(0\,\phi+c\dlmu h)\|_{\h}&<&\delta.
\end{eqnarray*}
\end{Thm}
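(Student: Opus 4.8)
The statement is a uniform-convergence refinement of Theorem~\ref{conv}, now for $a\searrow\lambda_1$ from above, and it splits into four regimes according to whether $c<\cm$, $c>\cp$, or $\cm\le c\le\cp$ (and in the latter case which of the two solution branches we look at). In every regime the strategy is the same contradiction scheme used in Theorem~\ref{conv}: suppose it fails, so there are $\delta>0$, $a_n\searrow\lambda_1$, admissible $c_n$ and corresponding solutions $(a_n,u_n,c_n)$ violating the claimed bound; by Proposition~\ref{ccota} the $c_n$ are bounded, so $c_n\to c_0$ after a subsequence; by Lemma~\ref{continua} the $u_n$ converge in $\h$ to some $u_0$ with $(\lambda_1,u_0,c_0)$ a solution of~(\ref{a}); and by Section~2, every solution at $a=\lambda_1$ has the form $u_0=t_0\phi+c_0\dlmu h$ with $(t_0,c_0)\in\Lambda$ and, by~(\ref{sinalt}), $t_0\ge0$. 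The work is then to pin down $t_0$ and $c_0$ in each regime and see that the limit is exactly the asserted target, so that passing to the limit in the violated inequality gives $0\ge\delta$.

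First the two ``outer'' regimes. If $c_n<\cm$ for all $n$, then $c_0\le\cm$; since $t_0\ge0$ and $(t_0,c_0)\in\Lambda$, the description of $\Lambda$ in Section~2 forces $c_0\ge\cm$ (as $\clm$ is strictly increasing with $\clm(0)=\cm$), hence $c_0=\cm$ and then $t_0=0$, i.e.\ $u_0=0\,\phi+\cm\dlmu h$. Actually one must be slightly careful: it is not the hypothesis that $c_n<\cm$ for each $n$ but only that $(a_n,u_n,c_n)$ is a solution with $c_n$ in the range where the ``$c<\cm$'' alternative applies; since for $a$ near $\lambda_1$ solutions only exist for $c\le\csp(a)$ and $\csp(a)\to\cp$ (Proposition~\ref{dia}), and similarly $c\ge\csm(a)\to\cm$, the admissible $c_n$ in this regime actually accumulate only at $\cm$, which is exactly what one needs. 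The case $c_n>\cp$ is symmetric, using that $\clp$ is strictly decreasing with $\clp(0)=\cp$ and Proposition~\ref{dia} for the upper end, giving $u_0=0\,\phi+\cp\dlmu h$.

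Now the ``inner'' regime $\cm\le c\le\cp$, where two solutions $u_a^*(c)$ and $u_a^\sharp(c)$ coexist (Theorem~\ref{stab}). For the Morse-index-one branch $u_a^\sharp(c)$: run the contradiction scheme; the limit solution is $t_0\phi+c_0\dlmu h$ with $t_0\ge0$, and one shows $t_0=0$ using the new Lemma preceding Figure~\ref{fig13} (the analogue of Lemma~\ref{proj}), which says that for $a$ close to $\lambda_1$ every solution has $(t,c)\in\Lambda_{\hat t}^C$, i.e.\ $t<\hat t$; letting $\hat t\searrow0$ forces $t_0\le0$, hence $t_0=0$, so $u_0=0\,\phi+c_0\dlmu h$, the claimed target. (One also needs that this branch is the one selected by the contradiction, not the stable one; this is kept track of via the Morse index, which is preserved in the limit away from the degenerate endpoints, together with Lemma~\ref{turn} near the endpoints.) For the stable branch $u_a^*(c)$ the target is $t_c\phi+c\dlmu h$ with $t_c$ as defined, i.e.\ the ``top'' of $\Lambda$: here one argues that the limit $(t_0,c_0)$ must lie on the upper boundary of $\Lambda$, namely $t_0=(\clm)^{-1}(c_0)$ if $c_0<\clm(T)$, $t_0=(\clp)^{-1}(c_0)$ if $c_0>\clp(T)$, and $t_0=T$ in between — this is because a stable solution at $a>\lambda_1$ close to $\lambda_1$ must have $t$ as large as $\Lambda$ allows (the Morse-zero/stability condition pushes $t$ to the boundary, exactly as in the proof of Proposition~\ref{dia} where Lemma~\ref{TFI} is used to manufacture solutions near $\partial\Lambda$ for $a$ near $\lambda_1$ and thereby trap $\csp(a)$ against $\clp(0)$). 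A symmetric trapping argument at each interior $c_0$, using Lemma~\ref{TFI} at a point of $\partial\Lambda$ with the prescribed $t$-coordinate, shows $t_0=t_{c_0}$, so $u_0=t_{c_0}\phi+c_0\dlmu h$, the target.

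**Main obstacle.** The routine parts (the compactness, the elliptic regularity, the form of solutions at $\lambda_1$) are already packaged in Lemmas~\ref{thmcz}--\ref{continua} and Section~2; the genuinely delicate point is the stable branch in the inner regime, proving $t_0=t_{c_0}$, because there one cannot merely use sign information from~(\ref{sinalt}) — one needs to show the limiting $t$ is as \emph{large} as possible, which requires exploiting stability/Morse index in the limit together with the Lemma~\ref{TFI} construction of nearby solutions along $\partial\Lambda$, handled carefully in the three sub-cases $c_0<\clm(T)$, $\clm(T)\le c_0\le\clp(T)$, $c_0>\clp(T)$, and also matching up correctly with the endpoint behavior from Proposition~\ref{dia}. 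This is precisely the argument illustrated in Figure~\ref{fig14}, now applied at a generic interior value of $c$ rather than only at $c=0$, and kept uniform by the usual subsequence-extraction-from-an-arbitrary-violating-sequence device.
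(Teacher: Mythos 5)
Your overall scheme (argue by contradiction, use Proposition~\ref{ccota} and Lemma~\ref{continua} to extract a convergent subsequence, identify the limit as a solution at $a=\lambda_1$, i.e.\ a point $(t_0,c_0)\in\Lambda$ with $t_0\geq 0$ by (\ref{sinalt})) is indeed the route the paper intends when it says the proof is ``similar to Theorem~\ref{conv}'', and your treatment of the outer regimes $c<\cm$ and $c>\cp$ is correct. The inner regime, however, contains a genuine gap. First, you misread the lemma preceding Figure~\ref{fig13}: $(t,c)\in\Lambda_{\hat t}^C$ does \emph{not} mean $t<\hat t$; the set $\Lambda_{\hat t}^C=\{t\geq 0\}\setminus\Lambda_{\hat t}$ contains every point with $t\geq0$ lying outside $\Lambda$, with $t$ as large as you like. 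If the lemma said $t<\hat t$ for all solutions, then letting $\hat t\searrow 0$ would contradict the very statement you are proving (the stable branch has $t$ near $t_c$, and for instance at $c=0$ the stable solution $u_\dagger(a)$ has $t\to M>0$). Correctly read, and passed to the limit, the lemma only yields the dichotomy that $(t_0,c_0)$ is either a point with $t_0=0$ or a point of $\partial\Lambda$, i.e.\ $t_0\in\{0,t_{c_0}\}$; it does not by itself force $t_0=0$ for the $u^\sharp$ branch.

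Second, the device you invoke to match branches with limits does not work: Morse index is \emph{not} preserved in this limit, because every solution at $a=\lambda_1$ satisfies $u\leq M$, so its linearization is $-\Delta-\lambda_1$, which is degenerate with Morse index zero; thus the index-one solutions $u^\sharp_a(c)$ necessarily converge to index-zero degenerate limits, and the limit's index cannot tell $u^\sharp$ from $u^*$. Likewise, ``stability pushes $t$ to the boundary'' is asserted rather than proved; the trapping argument of Proposition~\ref{dia} exploits the extreme values $\csm(a),\csp(a)$ and does not transfer verbatim to an interior $c_0$, where you must still exclude the scenario that the stable branch collapses onto $t_0=0$ while $u^\sharp$ drifts to $\partial\Lambda$. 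What is missing is a branch-identification step, for example: if $(0,c_0)$ is an interior point of $\Lambda$, then $\max_{\overline\Omega} c_0\dlmu h<M$, so any solution close to $0\,\phi+c_0\dlmu h$ with $a>\lambda_1$ satisfies $u<M$, whence $f(u)=0$, the $\phi$-component vanishes by (\ref{sinalt}), and $u=c(\Delta+a)^{-1}h$, a nondegenerate solution of Morse index exactly one; by the exact two-solution count of Theorem~\ref{stab} this identifies $u^\sharp_a(c)$ with this explicit solution (hence its limit is $0\,\phi+c\dlmu h$) and forces the stable branch, via the dichotomy above, onto the boundary point $t_{c_0}$. Some argument of this kind (or the Lemma~\ref{TFI} analysis near $\partial\Lambda$) is needed before the central case of the theorem is established.
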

The proof is similar to the one of Theorem~\ref{conv}.

In Figure~\ref{fig16}, we plot the curves $c\mapsto(a,t(c),c)$ for $a$ in an interval $]\lambda_1,\lambda_1+\delta[$, for some
small $\delta>0$ and $M>0$.
\begin{figure}
\centering
\begin{psfrags}
\psfrag{c}{{\tiny $c$}}
\psfrag{a}{{\tiny $a$}}
\psfrag{t}{{\tiny $t$}}
\includegraphics[scale=.6]{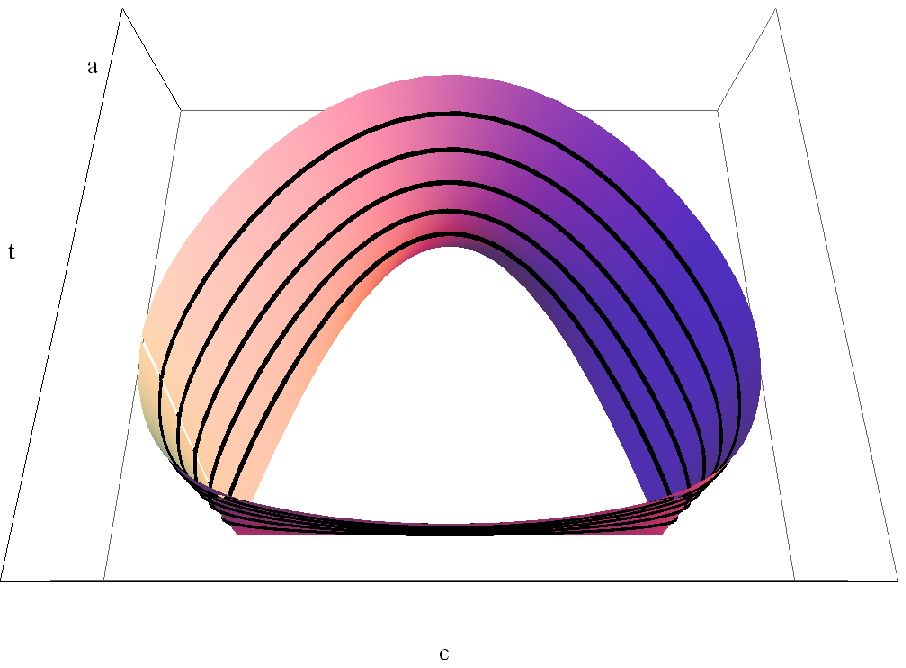}\ \ \ \ \ \ \ \
\includegraphics[scale=.6]{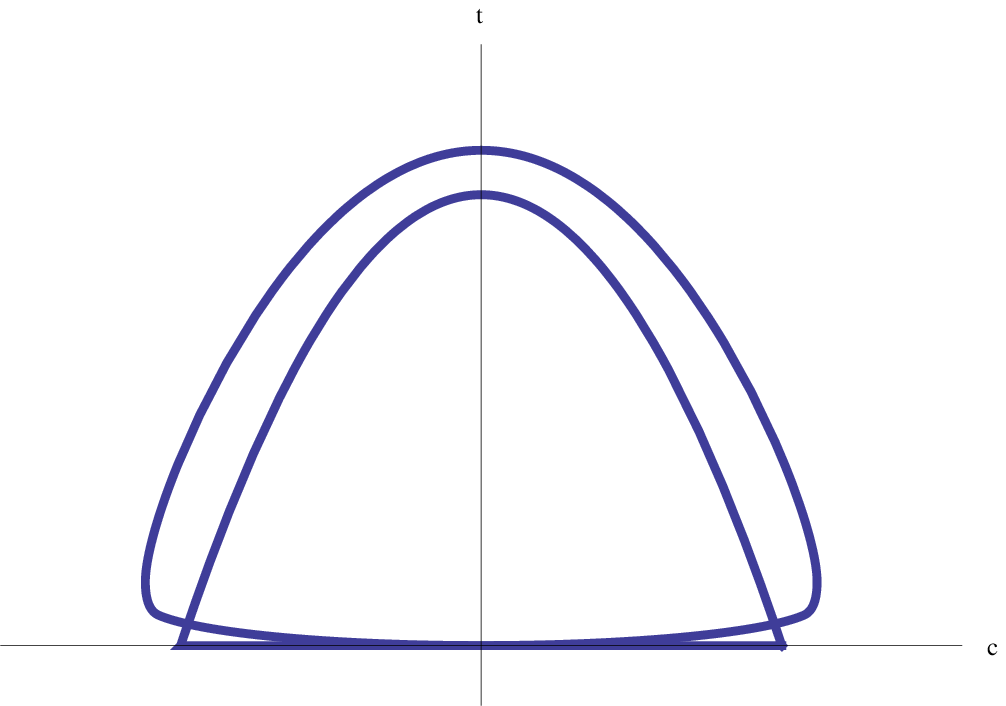}
\end{psfrags}
\caption{In the left image one can see the surface formed by the curves $c\mapsto(a,t(c),c)$ for $a$ in an interval $]\lambda_1,\lambda_1+\delta[$.
In the right image one can see a curve $c\mapsto(a,t(c),c)$ for an $a$ in $]\lambda_1,\lambda_1+\delta[$ and one can see
the boundary of the set $\Lambda\cap\{(t,c):t\geq 0\}$. This figure corresponds to a case where $M>0$.}\label{fig16}
\end{figure}

To conclude this section we consider the case $M=0$.
\begin{Thm}\label{convl1b}
 Suppose $M=0$. For all $\delta>0$, there exists $\eps>0$ satisfying for all $\lambda_1<a<\lambda_1+\eps$ if the value
$c$ is such that there exists a solution $(a,u_a(c),c)$, then
$$\|u_a(c)\|_{\h}<\delta.
$$
\end{Thm}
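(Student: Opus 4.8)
The plan is to argue by contradiction, in close parallel to the proof of Theorem~\ref{conv}, but using the $M=0$ structure to pin down the limit. Suppose the conclusion fails: there exist $\delta>0$, a sequence $a_n\searrow\lambda_1$, and values $c_n$ such that a solution $(a_n,u_{a_n}(c_n),c_n)$ exists with $\|u_{a_n}(c_n)\|_{\h}\geq\delta$. By Proposition~\ref{ccota}, the $c_n$ are bounded, so after passing to a subsequence $c_n\to c_0$. By Lemma~\ref{continua}, after a further subsequence $u_{a_n}(c_n)\to u_0$ in $\h$, and $(\lambda_1,u_0,c_0)$ is a solution of~(\ref{a}) with $a=\lambda_1$. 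Passing to the limit in $\|u_{a_n}(c_n)\|_{\h}\geq\delta$ gives $\|u_0\|_{\h}\geq\delta$, so in particular $u_0\not\equiv 0$.

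Next I would identify $u_0$. By the discussion opening Section~2, every solution of~(\ref{a}) at $a=\lambda_1$ has the form $u_0=t_0\phi+c_0\dlmu h$ with $(t_0,c_0)\in\Lambda$, and since $M=0$ the set $\Lambda$ forces $t_0\leq 0$ (indeed $T=0$, and for $c\ne 0$ we even have $t_0<0$ unless $c_0$ is such that $(0,c_0)\in\Lambda$). On the other hand, since $a_n>\lambda_1$, formula~(\ref{sinalt}) applied to $u_{a_n}(c_n)=t_n\phi+y_n$ gives $(a_n-\lambda_1)t_n\int\phi^2\,dx=\int f(u_{a_n}(c_n))\phi\,dx\geq 0$, hence $t_n\geq 0$; passing to the limit, $t_0\geq 0$. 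Combining with $t_0\leq 0$ yields $t_0=0$, so $u_0=c_0\dlmu h$.

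It remains to rule out $c_0\neq 0$, i.e.\ to show $c_0\dlmu h$ cannot be a genuine $a=\lambda_1$ solution unless it is trivial. When $M=0$, the condition $(0,c_0)\in\Lambda$ reads $c_0\dlmu h\leq 0$ on all of $\Omega$; but $\dlmu h$ is orthogonal to $\phi$ and hence changes sign (as noted just before~(\ref{T})), so $c_0\dlmu h\leq 0$ everywhere forces $c_0=0$. Therefore $u_0\equiv 0$, contradicting $\|u_0\|_{\h}\geq\delta$. This contradiction establishes the theorem, with $\eps$ obtained from the negation of the statement in the usual way.

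The main obstacle is really just making sure the compactness inputs apply uniformly in $c$: one needs Proposition~\ref{ccota} to bound $c_n$ (so the $a$-interval $J=\,]\lambda_1,a_2]$ must be invoked, which is legitimate since we only care about $a$ near $\lambda_1$), and Lemma~\ref{continua} to extract an $\h$-convergent subsequence of solutions; after that the sign argument via~(\ref{sinalt}) together with the sign-changing property of $\dlmu h$ does all the work. No new estimates are needed beyond those already established, which is why the proof is only sketched as "similar to the one of Theorem~\ref{conv}."
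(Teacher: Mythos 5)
Your proof is correct and is essentially the argument the paper leaves implicit (Theorem~\ref{convl1b} is stated without proof, with the preceding remark that such convergence results are proved ``similarly to Theorem~\ref{conv}''): a contradiction/compactness argument that identifies the limit as an $a=\lambda_1$ solution with nonnegative $\phi$-component, which for $M=0$ forces $u_0\equiv 0$. Your specific inputs --- Proposition~\ref{ccota} for the bound on $c_n$, Lemma~\ref{continua} for $\h$-compactness with $a_n\searrow\lambda_1$ (used in exactly this way by the paper in Proposition~\ref{dia}), the sign identity (\ref{sinalt}), and the degeneracy $T=0$, $\clm(0)=\clp(0)=0$ of $\Lambda$ when $M=0$ --- are all applied within their stated scope, so the argument is complete.
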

\section{Linear growth $a$ greater than or equal to $\lambda_2$}

We recall the assumption that the second eigenvalue of the Dirichlet Laplacian on $\Omega$
is simple and we call $\psi$ an associated eigenfunction, normalized so $\max_\Omega\psi=1$.
We define
$$
\beta=-\min_\Omega\psi,
$$
 so that $\beta>0$. To fix ideas, without loss of generality, we suppose
\begin{equation}\label{hpsi}
\int h\psi\,dx<0.
\end{equation}

For $a=\lambda_2$, the set of solutions of (\ref{a}) is completely described by
\begin{Thm}
\label{thmatl2} Suppose $f$ satisfies {\rm\bf (i)}-{\rm\bf (iv)} and $h$ satisfies {\rm\bf (a)}-{\rm\bf (c)}.
 Fix $a=\lambda_2$.
 The set of solutions $(\lambda_2, u, c)$ of {\rm (\ref{a})} is a compact connected one dimensional manifold
${\cal M}$ of class $C^1$
in $\{\lambda_2\}\times\h\times\R$. We have
$${\cal M}={\cal M}^\flat\cup{\cal L}\cup{\cal M}^\sharp\cup\{{\bm p}_*^+\}\cup{\cal M}^*\cup\{{\bm p}_*^-\},$$
where ${\cal L}$ connects ${\cal M}^\flat$ and ${\cal M}^\sharp$, $\{{\bm p}_*^+\}$ connects
${\cal M}^\sharp$ and ${\cal M}^*$, and, finally,\/ $\{{\bm p}_*^-\}$ connects
${\cal M}^*$ and ${\cal M}^\flat$.
Here
\begin{itemize}
\item
${\cal M}^\flat$ is a manifold of nondegenerate solutions with Morse index equal to one, which
is a graph $\{(\lambda_2,u^\flat_{\lambda_2}(c),c):c\in\,]\csm(\lambda_2),0[\}$.
\item
${\cal L}$ is a segment (a point in the case $M=0$) of degenerate solutions with Morse index
equal to one, $\bigl\{(\lambda_2,t\psi,0):t\in\bigl[-\frac{M}{\beta},M\bigr]\bigr\}$.
\item
${\cal M}^\sharp$ is a manifold of nondegenerate solutions with Morse index equal to one, which
is a graph $\{(\lambda_2,u^\sharp_{\lambda_2}(c),c):c\in\,]0,\csp(\lambda_2)[\}$.
\item ${\bm p}_*^+=(\lambda_2, \usp(\lambda_2), \csp(\lambda_2))$ is a degenerate solution with Morse index equal to zero.
\item ${\cal M}^*$ is a manifold of stable solutions, which
is a graph $\{(\lambda_2, u^*_{\lambda_2}(c),c):c\in\,]\csm(\lambda_2),\csp(\lambda_2)[\}$.
\item ${\bm p}_*^-=(\lambda_2, \usm(\lambda_2), \csm(\lambda_2))$ is a degenerate solution with Morse index equal to zero.
\end{itemize}
\end{Thm}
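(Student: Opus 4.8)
The plan is to first enumerate \emph{all} solutions of (\ref{a}) with $a=\lambda_2$ by a Morse--index argument, then to glue them into ${\cal M}$ by continuation, using the compactness furnished by Proposition~\ref{ccota} and Lemma~\ref{continua}, the turning--point description of Lemma~\ref{turn}, Proposition~\ref{thmd}, and a Liapunov--Schmidt reduction near the degenerate segment ${\cal L}$. The key observation is spectral: at any solution $(\lambda_2,u,c)$ the linearized operator $L_u=-\Delta-\lambda_2+f'(u)$ dominates $-\Delta-\lambda_2$ because $f'\geq0$, and $-\Delta-\lambda_2$ has first eigenvalue $\lambda_1-\lambda_2<0$, second eigenvalue $0$ with eigenfunction $\psi$, and all others positive; hence $L_u$ has at most one negative eigenvalue (so every solution has Morse index $0$ or $1$), and its second eigenvalue is \emph{strictly positive unless $f'(u)\equiv0$}, since a $2$--dimensional subspace on which the quadratic form of $L_u$ is nonpositive must equal $\mathrm{span}\{\phi,\psi\}$, on which that form is positive at $\psi$ once $f'(u)\not\equiv0$. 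Thus every degenerate solution has either first eigenvalue $0$ (Morse index $0$) or second eigenvalue $0$, the latter forcing $f(u)\equiv0$. When $f(u)\equiv0$, (\ref{a}) reads $-\Delta u=\lambda_2u-ch$; testing with $\psi$ and using {\bf (c)} gives $c=0$, then $u\in\ker(-\Delta-\lambda_2)=\mathrm{span}\{\psi\}$, and $u\leq M$ together with $\max_\Omega\psi=1$, $\min_\Omega\psi=-\beta$ forces $u=t\psi$ with $t\in[-M/\beta,M]$: these are exactly ${\cal L}$, degenerate of Morse index $1$ (a single point if $M=0$). A short continuation argument disposes of the remaining $c=0$ solutions: a solution at $c=0$ with $f(u)\not\equiv0$ is nondegenerate (by the above), hence continues by the Implicit Function Theorem to a solution of (\ref{b}) of the same Morse index for $a$ slightly below $\lambda_2$; if Morse index $1$ it coincides with the unique Morse-$1$ solution of (\ref{b}) at that $a$, namely $0$ (Theorem~\ref{stab}), forcing $u=0$ in the limit, a contradiction; if Morse index $0$ it is $u_\dagger(\lambda_2)$ (Lemma~\ref{thmcz}). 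So the solutions at $c=0$ are precisely ${\cal L}$ and $(\lambda_2,u_\dagger(\lambda_2),0)$.

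Next I would describe the solution set near ${\cal L}$ by Liapunov--Schmidt. Writing $u=t\psi+v$ with $v\perp\psi$ in $L^2(\Omega)$ and projecting (\ref{a}) onto $\psi^\perp$, the Implicit Function Theorem solves for $v=v(t,c)$ near ${\cal L}$, since the relevant partial linearization is $-\Delta-\lambda_2$ on $\psi^\perp$ (when $f'\equiv0$), which is invertible. The bifurcation equation is $\Phi(t,c):=\int f(t\psi+v(t,c))\,\psi\,dx+c\int h\psi\,dx=0$, with $\Phi(t,0)=0$ and $v(t,0)=0$ for $t\in[-M/\beta,M]$. Since $f'(t\psi)=0$ a.e.\ for such $t$ (at the endpoints $t\psi=M$ only on a null set), $\partial_c\Phi(t,0)=\int h\psi\,dx\neq0$; hence near an \emph{interior} point of ${\cal L}$ the solution set is exactly ${\cal L}$, while near the endpoints $u=M\psi$ and $u=-\frac{M}{\beta}\psi$ it consists of ${\cal L}$ together with one $C^1$ branch emanating into $t>M$, respectively $t<-M/\beta$. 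Using {\bf (ii)} (so $\int f(t\psi)\psi\,dx>0$ for $t$ slightly above $M$ and $<0$ for $t$ slightly below $-M/\beta$, the corrector $v$ being of higher order) and {\bf (c)}, these branches lie in $\{c>0\}$, respectively $\{c<0\}$; a Crandall--Rabinowitz eigenvalue computation, parallel to the one giving $\mu'(t_*)>0$ in Lemma~\ref{turn}, shows the zero eigenvalue perturbs into a positive one along them, so they are nondegenerate Morse-index-$1$ arcs; these germs will become ${\cal M}^\sharp$ and ${\cal M}^\flat$.

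To assemble ${\cal M}$ and show there is nothing else, note that the full solution set $\Sigma$ is compact (Proposition~\ref{ccota}, Lemma~\ref{continua}) and, by Lemma~\ref{turn} and the local picture just obtained, is a $C^1$ $1$-manifold at every point, hence a finite disjoint union of circles. Its degenerate points are exactly ${\cal L}\cup\{{\bm p}_*^+,{\bm p}_*^-\}$, where ${\bm p}_*^\pm=(\lambda_2,u_*^\pm(\lambda_2),c_*^\pm(\lambda_2))$ are the two Morse-$0$ degenerate solutions supplied by Proposition~\ref{thmd} (recall $\csm<0<\csp$). On the component $C$ through ${\bm p}_*^+$, the function $c$ attains its extrema at degenerate points (at a nondegenerate extremum the Implicit Function Theorem would produce solutions with $c$ on both sides, impossible on a circle lying to one side), so $\max_C c=\csp$ at ${\bm p}_*^+$ and $\min_C c=\csm<0$ at ${\bm p}_*^-$; thus ${\bm p}_*^-\in C$ and $C$ crosses $c=0$, where by the first paragraph the only solutions are ${\cal L}$ and $u_\dagger(\lambda_2)$, so $C$ contains all of ${\cal L}$ and also $u_\dagger(\lambda_2)$. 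Tracking $C$ from ${\bm p}_*^+$ along the Morse-$1$ arm of Lemma~\ref{turn}, it stays at Morse index $1$ with $c>0$ until it attaches to ${\cal L}$ at $M\psi$ (this arc is ${\cal M}^\sharp$), traverses ${\cal L}$, exits at $-\frac{M}{\beta}\psi$ into a Morse-$1$ arc with $c<0$ (this is ${\cal M}^\flat$) ending at ${\bm p}_*^-$, and then returns via the stable arm of Lemma~\ref{turn}, a branch of stable solutions through $u_\dagger(\lambda_2)$ (this is ${\cal M}^*$), back to ${\bm p}_*^+$. This identifies $C={\cal M}$ with the stated structure, the graph descriptions following from the monotonicity of $c$ along each arc. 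Any other component would be a circle of nondegenerate solutions, impossible by the same extremum argument; hence $\Sigma={\cal M}$, compact and connected.

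The hard part is the local analysis near ${\cal L}$: the Liapunov--Schmidt reduction is delicate because $f$ is only $C^2$ and vanishes identically below $M$, so one must differentiate $\Phi$ carefully and control $v(t,c)$ as $t$ crosses $M$, and the eigenvalue perturbation fixing the Morse index on the emanating branches requires the Crandall--Rabinowitz machinery; a secondary difficulty is the monotonicity of $c$ along ${\cal M}^\flat$, ${\cal M}^\sharp$ and ${\cal M}^*$ needed for the explicit graph descriptions.
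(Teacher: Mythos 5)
Your proposal follows the same global skeleton as the paper's (sketched) proof: Morse index at most one at $a=\lambda_2$, compactness from Proposition~\ref{ccota} and Lemma~\ref{continua}, the turning points ${\bm p}_*^\pm$ handled by Lemma~\ref{turn} and Proposition~\ref{thmd}, continuation in $c$, and Lemma~\ref{thmcz}/Theorem~\ref{stab} to pin down the $c=0$ solutions. Where you genuinely differ is the analysis near the segment ${\cal L}$: the paper delegates this to \cite{PG} (Theorem~1.2 there for the chain ${\bm p}_*^+\to{\cal M}^\sharp\to{\cal L}\to{\cal M}^\flat$, Lemma~6.1 there for nondegeneracy of the Morse-index-one solutions off ${\cal L}$), whereas you prove these facts in place by a spectral comparison special to $a=\lambda_2$ (the second eigenvalue of $-\Delta-\lambda_2+f'(u)$ is nonnegative and can vanish only if $f'(u)\equiv0$, which forces $c=0$ and $u=t\psi$) together with an explicit Liapunov--Schmidt reduction along ${\cal L}$, solving for $v(t,c)$ and reading off the branches emanating from the endpoints and their $c$-signs from hypothesis {\bf (c)}. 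This makes the proof self-contained, and in fact your spectral observation renders the Crandall--Rabinowitz eigenvalue perturbation you invoke unnecessary: any solution with $f'(u)\not\equiv0$ automatically has strictly positive second eigenvalue, so the emanating arcs are nondegenerate of index one as soon as the first eigenvalue stays negative.

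Three steps need patching as written, all with tools you already use. (1) A two-dimensional subspace on which the quadratic form of $-\Delta-\lambda_2+f'(u)$ is nonpositive need not equal $\mathrm{span}\{\phi,\psi\}$; what is true, and all you need, is that it contains $\psi$: a nonzero element orthogonal to $\phi$ is $\beta\psi+w$ with $w\perp\phi,\psi$, its form value is at least $(\lambda_3-\lambda_2)\int w^2\,dx$, so $w=0$, and then $0\geq\int f'(u)\psi^2\,dx$ gives $f'(u)\equiv0$. (2) ``A solution at $c=0$ with $f(u)\not\equiv0$ is nondegenerate (by the above)'' is not covered by that argument, which only treats second-eigenvalue degeneracies; you must also exclude index-zero degenerate solutions at $c=0$, which follows from the argument around (\ref{vanish}) in Lemma~\ref{turn} (there $c_*\int hw_*\,dx=\int(f'(u_*)u_*-f(u_*))w_*\,dx$ with $w_*>0$, and $c_*=0$ would force $u_*\leq M$ and then $a=\lambda_1$), equivalently from $\csm(\lambda_2)<0<\csp(\lambda_2)$, which you invoke only later. (3) The inference ``$\min_C c=\csm(\lambda_2)$ at ${\bm p}_*^-$'' does not follow from extrema-at-degenerate-points alone, since the minimum could a priori be $0$, attained on ${\cal L}$; you need first to follow the stable arm of Lemma~\ref{turn} from ${\bm p}_*^+$ in decreasing $c$ (compactness plus your spectral dichotomy show its endpoint is an index-zero degenerate solution, hence ${\bm p}_*^-$), and likewise you should argue why the index-one arm from ${\bm p}_*^+$ attaches to ${\cal L}$ at $M\psi$ rather than running directly to ${\bm p}_*^-$ (if it did, the positive-$c$ branch emanating from $M\psi$ would still have to terminate at the only positive-$c$ degenerate solution ${\bm p}_*^+$, merging the components and restoring the stated picture). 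With these repairs the argument goes through at the same level of rigor as the paper's sketch.
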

Theorem~\ref{thmatl2} is illustrated in Figure~\ref{fig1}.
\begin{figure}
\centering
\begin{psfrags}
\psfrag{c}{{\tiny $c$}}
\psfrag{u}{{\tiny $u$}}
\psfrag{z}{{\tiny ${\bm p}_*^+$}}
\psfrag{w}{{\tiny ${\bm p}_*^-$}}
\psfrag{s}{{\tiny ${\cal M}^\sharp$}}
\psfrag{t}{{\tiny ${\cal M}^*$}}
\psfrag{f}{{\tiny ${\cal M}^\flat$}}
\psfrag{l}{{\tiny ${\cal L}$}}
\psfrag{b}{{\tiny $(\usm(\lambda_2),\csm(\lambda_2))$}}
\psfrag{a}{{\tiny $(\usp(\lambda_2),\csp(\lambda_2))$}}
\includegraphics[scale=.6]{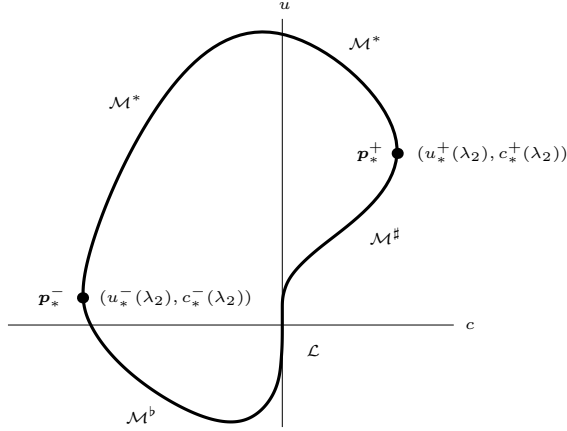}
\end{psfrags}
\caption{A bifurcation curve for $a=\lambda_2$.}\label{fig1}
\end{figure}
\begin{proof}[Sketch of the proof] We start at $(\lambda_2, u_\dagger(\lambda_2),0)$,
the stable solution at $c=0$. We may use the Implicit Function Theorem to follow the solutions for
 $c\in ]\csm(\lambda_2),\csp(\lambda_2)[$, arriving at the left at ${\bm p}_*^-$ and at the right at ${\bm p}_*^+$.
Arguing as in the proof of Theorem~1.2 of \cite{PG}, ${\bm p}_*^+$ is connected successively to ${\cal M}^\sharp$, ${\cal L}$ and ${\cal M}^\flat$.
By Lemma~6.1 in \cite{PG}, we may use the parameter $c$ to follow the branch ${\cal M}^\flat$ until we reach a degenerate
solution with Morse index equal to zero, as all solutions with Morse index equal to one are nondegenerate,
except for $(\lambda_2, t\psi,0)$ for $t\in\bigl[-\,\frac M\beta, M\bigr]$. The degenerate solution with Morse index equal to zero
must be ${\bm p}_*^-$, since it is the only one corresponding to a negative value of $c$. The branches ${\cal M}^\flat$
and ${\cal M}^*$ connect at ${\bm p}_*^-$. This is in accordance to Lemma~\ref{turn}.
\end{proof}

When $a>\lambda_2$, the set of solutions of (\ref{a}) is characterized in
\begin{Thm}
\label{thm} Suppose $f$ satisfies {\rm\bf (i)}-{\rm\bf (iv)}
and $h$ satisfies {\rm\bf (a)}-{\rm\bf (c)}. Without loss of generality, suppose {\rm (\ref{hpsi})} is true.
There exists $\delta>0$ such that the following holds.
Fix $\lambda_2<a<\lambda_2+\delta$.
The set of solutions $(a,u,c)$ of~{\rm (\ref{a})} is a compact connected one dimensional manifold ${\cal M}$ of class $C^1$
in $\{a\}\times\h\times\R$. We have ${\cal M}$ is the disjoint union
$${\cal M}={\cal M}^\flat\cup\{{\bm p}_\flat\}\cup{\cal M}^\natural\cup\{{\bm p}_\sharp\}\cup{\cal M}^\sharp
\cup\{{\bm p}_*^+\}\cup{\cal M}^*\cup\{{\bm p}_*^-\},$$
where $\{{\bm p}_\flat\}$ connects ${\cal M}^\flat$ and ${\cal M}^\natural$, $\{{\bm p}_\sharp\}$ connects
${\cal M}^\natural$ and ${\cal M}^\sharp$, $\{{\bm p}_*^+\}$ connects ${\cal M}^\sharp$ and ${\cal M}^*$,
and $\{{\bm p}_*^-\}$ connects ${\cal M}^*$ and ${\cal M}^\flat$.
Here
\begin{itemize}
\item ${\cal M}^\flat$ is a manifold
of nondegenerate solutions with Morse index equal to one, which
is a graph $\{(a,u^\flat_a(c),c):c\in\,]\csm(a),c_\flat(a)[\}$.
\item
${\bm p}_\flat=(a,u_\flat(a),c_\flat(a))$ is a degenerate solution with Morse index equal to one.
\item ${\cal M}^\natural$ is a manifold
of solutions with Morse index equal to one or to two,
$$\bigl\{(a,u^\natural_a(t),c^\natural_a(t)):u^\natural_a(t)=t\psi+\y^\natural_a(t),\ t\in J\bigr\},$$
with $c^\natural_a:J\to\R$,
$y^\natural_a:J\to\bigl\{y\in\h:\int y\psi\,dx=0\bigr\}$ and
$J=\bigl]-\frac{M}{\beta}-\eps_\flat,M+\eps_\sharp\bigr[$, for some $\eps_\flat,\eps_\sharp>0$.
\item
${\bm p}_\sharp=(a,u_\sharp(a),c_\sharp(a))$ is a degenerate solution with Morse index equal to one.
\item ${\cal M}^\sharp$ is a manifold
of nondegenerate solutions with Morse index equal to one, which
is a graph $\{(a,u^\sharp_a(c),c):c\in\,]c_\sharp(a),\csp(a)[\}$.
\item
${\bm p}_*^+=(a,\usp(a),\csp(a))$ is a degenerate solution with Morse index equal to zero.
\item ${\cal M}^*$ is the manifold of stable solutions, which
is a graph $\{(a,u^*_a(c),c):c\in\,]\csm,(a),\csp(a)[\}$.
\item
${\bm p}_*^-=(a,\usm(a),\csm(a))$ is a degenerate solution with Morse index equal to zero.
\end{itemize}
We have $(\cc^\natural_a)'(0)<0$ and
\begin{eqnarray*}
\lim_{t\searrow -\frac{M}{\beta}-\eps_\flat}(a,u^\natural_a(t),c^\natural_a(t))& =& (a,u_\flat(a),c_\flat(a)),\\
\lim_{t\nearrow M+\eps_\sharp}(a,u^\natural_a(t),c^\natural_a(t))&=& (a,u_\sharp(a),c_\sharp(a)).
\end{eqnarray*}
In particular, if $|c|$ is sufficiently small,
then {\rm (\ref{a})} has at least four solutions.
\end{Thm}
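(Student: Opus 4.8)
The plan is to follow the connected component of the solution set through the stable solution $(a,u_\dagger(a),0)$ of Lemma~\ref{thmcz}, using the parameter $c$ away from the second eigenfunction and the $\psi$-component of $u$ near it, and then to verify that this curve is the whole solution set. Throughout fix $\lambda_2<a<\lambda_2+\delta$ with $\delta$ small, so in particular $\lambda_2+\delta<\lambda_3$; then Proposition~\ref{ccota} applies, and, since $f'\geq0$ makes the $k$-th eigenvalue of the linearization at any solution $\geq\lambda_k-a>0$ for $k\geq3$, every solution has Morse index at most two. As in the proof of Theorem~\ref{stab}, I would start at $(a,u_\dagger(a),0)$ and continue in $c$ by the Implicit Function Theorem applied to $G$; by Lemma~\ref{continua} the branch stays in a compact set, by Proposition~\ref{ccota} it cannot be continued for $|c|$ large, so by Proposition~\ref{thmd} it reaches on the right ${\bm p}_*^+=(a,\usp(a),\csp(a))$, degenerate of Morse index zero with $\csp(a)>0$, and on the left ${\bm p}_*^-=(a,\usm(a),\csm(a))$ with $\csm(a)<0$; this arc is ${\cal M}^*$. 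By Lemma~\ref{turn}, beyond ${\bm p}_*^+$ (resp.\ ${\bm p}_*^-$) there emanates ${\cal M}^\sharp$ (resp.\ ${\cal M}^\flat$), a graph over $c$ of nondegenerate solutions of Morse index one. Following ${\cal M}^\sharp$ with $c$ decreasing and ${\cal M}^\flat$ with $c$ increasing — no further degenerate solution of Morse index zero occurs by Proposition~\ref{thmd}, the branches stay bounded by Lemma~\ref{continua} and Proposition~\ref{ccota}, and solutions do not exist for $|c|$ large — each must run into a degenerate solution ${\bm p}_\sharp=(a,u_\sharp(a),c_\sharp(a))$, resp.\ ${\bm p}_\flat=(a,u_\flat(a),c_\flat(a))$, which, the Morse index having been one and the zero eigenvalue being simple, is degenerate of Morse index one; arguing as in \cite{PG} (cf.\ the sketch of Theorem~\ref{thmatl2}) these satisfy $u_\sharp(a)\to M\psi$, $u_\flat(a)\to-\frac M\beta\psi$, $c_\sharp(a)\to0$ and $c_\flat(a)\to0$ as $a\searrow\lambda_2$, the limits lying on the segment ${\cal L}$ of Theorem~\ref{thmatl2}.

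To build the arc ${\cal M}^\natural$ I would use a Lyapunov--Schmidt reduction near the $\psi$-direction. Write $u=t\psi+y$ with $\int y\psi\,dx=0$ and let $P$ be the $L^2$-projection onto $\psi^\perp$. For $a$ near $\lambda_2$ and $(t,y,c)$ near $\bigl\{(t,0,0):-\frac M\beta\leq t\leq M\bigr\}$, the Implicit Function Theorem solves $P\bigl[\Delta(t\psi+y)+a(t\psi+y)-f(t\psi+y)-ch\bigr]=0$ uniquely for $y=y_a(t,c)$: its $y$-derivative, $v\mapsto P(\Delta v+av-f'(t\psi+y)v)$ on $\psi^\perp$, is invertible because, for $\delta$ small, its eigenvalues stay away from $0$ (the two extreme relevant ones being $\approx\lambda_1-a<0$ and $\geq\lambda_3-a>0$, and $f'\geq0$ only raises the latter). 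The remaining scalar equation is the $\psi$-component of (\ref{a}),
$$
t(a-\lambda_2)\int\psi^2\,dx=\int f(t\psi+y_a(t,c))\,\psi\,dx+c\int h\psi\,dx,
$$
and, since $\int h\psi\,dx\neq0$, is solved for $c=\cc^\natural_a(t)$; putting $\y^\natural_a(t):=y_a(t,\cc^\natural_a(t))$ and $u^\natural_a(t):=t\psi+\y^\natural_a(t)$ gives the $C^1$ arc ${\cal M}^\natural=\bigl\{(a,u^\natural_a(t),\cc^\natural_a(t)):t\in J\bigr\}$. For $M>0$ and $t$ near $0$ one has $\max_\Omega u^\natural_a(t)<M$, so $f\equiv0$ along the arc there and $\cc^\natural_a(t)=\frac{t(a-\lambda_2)\int\psi^2\,dx}{\int h\psi\,dx}$; in general $u^\natural_a(0)=0$, $\cc^\natural_a(0)=0$, and differentiating the scalar equation at $t=0$, where $f'(0)=0$, yields, by (\ref{hpsi}),
$$
(\cc^\natural_a)'(0)=\frac{(a-\lambda_2)\int\psi^2\,dx}{\int h\psi\,dx}<0.
$$
As $t$ grows past $M$ (resp.\ decreases past $-\frac M\beta$), $u^\natural_a(t)$ exceeds $M$ on an open set and the first eigenvalue of the linearization rises from $\lambda_2-a<0$; I take $\eps_\sharp$ (resp.\ $\eps_\flat$) $>0$ to be the first value of $t-M$ (resp.\ $-\frac M\beta-t$) at which it reaches $0$, so the corresponding solution is precisely ${\bm p}_\sharp$ (resp.\ ${\bm p}_\flat$) of the first paragraph — the degeneracy there occurring in the same near-$\psi$ eigendirection — which furnishes the two stated limits of $(a,u^\natural_a(t),\cc^\natural_a(t))$; tracking the linearized eigenvalues along ${\cal M}^\natural$ shows each of its solutions has Morse index one or two.

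Finally, by Lemma~\ref{continua} and Proposition~\ref{ccota} the full solution set lies in a compact subset of $\{a\}\times\h\times\R$ and, being closed, is compact; by the above it is a $C^1$ one-dimensional manifold, hence a finite disjoint union of circles, and the one through $(a,u_\dagger(a),0)$ is, on gluing ${\cal M}^*\to{\bm p}_*^+\to{\cal M}^\sharp\to{\bm p}_\sharp\to{\cal M}^\natural\to{\bm p}_\flat\to{\cal M}^\flat\to{\bm p}_*^-$ and back, a single circle ${\cal M}$ with the asserted decomposition. There is no other component: such a component would be compact, its solutions of Morse index at most two, hence it would contain a degenerate solution of Morse index zero or one; the index-zero case is excluded by Proposition~\ref{thmd}, whose curves $\D_*^\pm$ already meet ${\cal M}$ at ${\bm p}_*^\pm$, and in the index-one case the branch can be followed (through index-zero degeneracies via Lemma~\ref{turn}, through index-one ones via the reduction of the second paragraph) until it reaches $c=0$ or $t=0$, where Lemma~\ref{thmcz} and the uniqueness in the reduction leave only solutions already on ${\cal M}$. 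For the last assertion: $c$ restricted to the circle ${\cal M}$ has exactly the four critical points ${\bm p}_*^+,{\bm p}_\sharp,{\bm p}_\flat,{\bm p}_*^-$ (on ${\cal M}^*,{\cal M}^\sharp,{\cal M}^\flat$ it is a graph-coordinate, and on ${\cal M}^\natural$ the map $\cc^\natural_a$ is strictly monotone — linear with negative slope near $t=0$ and, for $\delta$ small, strictly monotone on all of $J$ since the part of the arc where $f'>0$ is small), with critical values $\csm(a)<c_\sharp(a)<0<c_\flat(a)<\csp(a)$; hence $c=0$ lies in the interior of each of the four monotone arcs and is attained transversally four times — the stable $u_\dagger(a)$ on ${\cal M}^*$, the trivial $u=0$ on ${\cal M}^\natural$, and two sign-changing solutions on ${\cal M}^\sharp$ and ${\cal M}^\flat$ — so, by transversality, (\ref{a}) has at least four solutions whenever $|c|$ is small.

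The main obstacle is the Lyapunov--Schmidt step of the second paragraph: one must choose a single $\delta>0$ that simultaneously makes the reduction valid on the fixed interval $J$ for all $a\in\,]\lambda_2,\lambda_2+\delta[$, forces the endpoints of ${\cal M}^\natural$ to coincide with the degenerate solutions ${\bm p}_\flat,{\bm p}_\sharp$ reached by $c$-continuation in the first paragraph (so that the curve closes up and the Morse-index count is consistent around it), and preserves the strict monotonicity of $\cc^\natural_a$ on all of $J$. Everything else is a fairly routine combination of the Implicit Function Theorem, Lemma~\ref{turn}, and the a~priori estimates established above.
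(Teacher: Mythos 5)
Your strategy has the same overall skeleton as the paper's (a reduction near the $\psi$-segment parametrized by the $\psi$-component $t$, continuation in $c$ elsewhere, Proposition~\ref{ccota} and Lemma~\ref{continua} for compactness, Proposition~\ref{thmd} and Lemma~\ref{turn} at the index-zero turning points), but there is a genuine gap, and it sits exactly where you flagged it. The paper's proof imports from \cite{PG} the localization of the Morse-index-one degenerate solutions: Lemma~7.3 of \cite{PG} gives the chart in the $(a,t)$ plane around $(\lambda_2,0)$ (your Lyapunov--Schmidt step), Lemma~7.2 gives the curve ${\cal D}_\varsigma$ of degenerate index-one solutions, Proposition~7.5 gives nondegeneracy and Morse index one at the chart ends, and Lemma~7.4 together with Proposition~\ref{ccota} shows that \emph{no} index-one degeneracy can be met once one leaves the chart. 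With these facts the curve is built in a single sweep: start at $(a,0)$, follow $t$ through ${\bm p}_\flat$ and ${\bm p}_\sharp$ to the chart ends, switch to $c$, and since any degenerate solution met outside the chart must have index zero, hence lie on $\D_*^\pm$, the two outer branches terminate at ${\bm p}_*^\pm$ and are closed up by ${\cal M}^*$ as in Theorem~\ref{thmatl2}. You instead build the two halves from opposite ends and need them to meet, but you have no substitute for Lemma~7.2/7.4 of \cite{PG}. Consequently: (i) your claim that ${\cal M}^\sharp$, followed with $c$ decreasing, ``must run into a degenerate solution of Morse index one'' is not justified --- Proposition~\ref{thmd} only controls index-zero degeneracies, and without localizing the index-one ones you cannot exclude that the index-one branch from ${\bm p}_*^+$ runs straight to ${\bm p}_*^-$ with no degeneracy in between, which is precisely what happens for $\lambda_1<a<\lambda_2$ (Theorem~\ref{stab}); (ii) you never show that the degenerate point this branch does reach coincides with the endpoint of ${\cal M}^\natural$, i.e.\ that the curve closes up with the stated eight pieces for one uniform $\delta$; (iii) your ``no other component'' argument again presupposes knowing where index-one degeneracies can occur in order to follow a hypothetical extra component back to $c=0$ or into the chart. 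These are the core technical content of the theorem, not routine.

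Two smaller points. The eigenvalue that vanishes at ${\bm p}_\flat$, ${\bm p}_\sharp$ is the \emph{second} eigenvalue of the linearization (the first stays near $\lambda_1-a<0$), so your phrase ``the first eigenvalue rises from $\lambda_2-a$'' conflates the two; relatedly, the invertibility of the reduced operator uses that $f'(u)$ is uniformly small, which holds only on a fixed neighborhood of the compact segment $\bigl\{t\psi:-\frac{M}{\beta}\leq t\leq M\bigr\}$, so the interval on which the reduction is valid must be fixed \emph{before} the degenerate endpoints are located, not defined by them. Finally, the strict monotonicity of $\cc^\natural_a$ on all of $J$ is asserted rather than proved; it is not needed for the last assertion, for which $(\cc^\natural_a)'(0)<0$, the nondegeneracy of the solution $(a,0,0)$, and $c_\sharp(a)<0<c_\flat(a)$ (which again comes from the chart analysis of \cite{PG}) already give four distinct solutions for $|c|$ small.
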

Theorem~\ref{thm} is illustrated in Figure~\ref{fig2}.
\begin{figure}
\centering
\begin{psfrags}
\psfrag{c}{{\tiny $c$}}
\psfrag{u}{{\tiny $u$}}
\psfrag{b}{{\tiny $(\usm(a),\csm(a))$}}
\psfrag{a}{{\tiny $(\usp(a),\csp(a))$}}
\psfrag{d}{{\tiny $(u_\flat(a),c_\flat(a))$}}
\psfrag{e}{{\tiny $(u_\sharp(a),c_\sharp(a))$}}
\psfrag{k}{{\tiny $\,{\bm p}_\flat$}}
\psfrag{j}{{\tiny ${\bm p}_\sharp$}}
\psfrag{z}{{\tiny ${\bm p}_*^+$}}
\psfrag{w}{{\tiny ${\bm p}_*^-$}}
\psfrag{s}{{\tiny ${\cal M}^\sharp$}}
\psfrag{t}{{\tiny ${\cal M}^*$}}
\psfrag{f}{{\tiny ${\cal M}^\flat$}}
\psfrag{l}{{\tiny ${\cal M}^\natural$}}
\includegraphics[scale=.6]{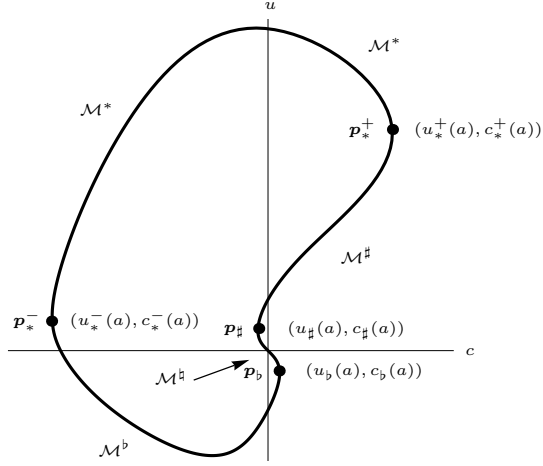}
\end{psfrags}
\caption{A bifurcation curve for $\lambda_2<a<\lambda_2+\delta$.}\label{fig2}
\end{figure}
\begin{proof}[Sketch of the proof]
Using Lemma~7.3 of \cite{PG},
we introduce a chart $]\lambda_2-\eps,\lambda_2+\eps[\times
\bigl]-\,\frac{M}{\beta}-\tilde{\eps},M+\tilde{\eps}\bigr[$ in the $(a,t)$ plane, around $(\lambda_2,0)$, to describe the solutions of
(\ref{a}) in a neighborhood of $(\lambda_2,0,0)$.
Suppose $\lambda_2<a<\lambda_2+\delta$.
 We start at $(a,0)$ and vary the parameter $t$ in $\bigl]-\,\frac{M}{\beta}-\tilde{\eps},M+\tilde{\eps}\bigr[$
to follow the solutions of (\ref{a}).
By choosing $\delta$ small enough, we can guarantee that when $t$ reaches $-\,\frac{M}{\beta}-\tilde{\eps}$ and
$M+\tilde{\eps}$ we can switch to the parameter $c$ to follow the solutions of (\ref{a}).
Indeed, consider the curve ${\cal D}_\varsigma$ of degenerate solutions with Morse index equal to one,
constructed in Lemma~7.2 of \cite{PG}.
A small enough choice of $\delta$ will make
 the projection of ${\cal D}_\varsigma$ in the chart not intersect either $\bigl(a,-\,\frac{M}{\beta}-\tilde{\eps}\bigr)$
or $(a,M+\tilde{\eps})$. Arguing as in the proof of Proposition~7.5 of \cite{PG},
we know that the solutions with coordinates $\bigl(a,-\,\frac{M}{\beta}-\tilde{\eps}\bigr)$
and $(a,M+\tilde{\eps})$ are nondegenerate and have Morse index equal to one. We also know that
when we arrive at the solution with coordinates
$(a,M+\tilde{\eps})$ we have to
increase $c$, and
when we arrive at the solution with coordinates
$\bigl(a,-\frac{M}{\beta}-\tilde{\eps}\bigr)$ we have to
decrease $c$, to follow the solutions out of the chart. By further reducing $\delta$, if necessary, Lemma~7.4 of \cite{PG}
together with Proposition~\ref{ccota},
assure that there are no degenerate solutions with Morse index equal to one when we use the parameter $c$ to
follow the solutions outside the chart.
If we find a degenerate solution it will necessarily have Morse index equal to zero.
But we know these lie on $\D_*^-$ and $\D_*^+$. So one can finish by arguing
as in the proof of Theorem~\ref{thmatl2}.
\end{proof}

We finish with a word on qualitative properties of solutions.
A nodal domain of a function $u$ is a connected component of $\Omega\setminus u^{-1}(0)$.
There are several works relating the number of nodal domains 
of solutions of elliptic equations with their Morse indices.
Possibly the first result in this direction was the Courant Nodal Domain Theorem~\cite{CH}, which states that number of nodal domains
of an $n$-th eigenfunction of the Dirichlet Laplacian on $\Omega$ is less than or equal to $n$.
But Courant gave examples where the $n$-th eigenfunction only has two nodal domains.

In the one dimensional case, $\Omega\subset\R^N$ with $N=1$, Sturm's Comparison Theorems allow one to establish that 
for some {\em linear homogeneous}\/ equations the number of nodal domains of a solution is
equal to its Morse index (see \cite[Chapter~8, Theorem~2.1]{CL}).

Courant's Nodal Domain Theorem was also generalized to some superlinear elliptic equations (for example, see
\cite{BL},
\cite{BW} and \cite{Y}).
Let us recall, in general terms, how the main argument goes.
Suppose $u\in H^1_0(\Omega)$ is a weak solution of
\begin{equation}\label{last}-\Delta u=g(x,u)\ \ \ {\rm in}\ \Omega,
\end{equation}
where $g$ satisfies appropriate growth conditions, $g(x,0)=0$ for all $x\in\Omega$, and $g'(x,t)t^2>g(x,t)t$ for all $x\in\Omega$ and $t\neq 0$
(where the prime denotes the derivative with respect to $t$).
Let $v=u\chi$, where $\chi$ is the characteristic function of a nodal domain of $u$. Using
\cite[Lemma~1]{MP}, the function $v$ belongs to $H^1_0(\Omega)$. Multiplying both sides of \eqref{last} by $v$ and integrating
over $\Omega$, and using the superlinear assumption on $g$, one obtains
\begin{eqnarray*}
\int|\nabla v|^2\,dx&=&\int g(x,v)v\,dx<\int g'(x,v)v^2\,dx.
\end{eqnarray*}
Thus, the functional 
$$
v\mapsto \int|\nabla v|^2\,dx-\int g'(x,u)v^2\,dx
$$ is negative in the direction $v$. Furthermore, two different nodal domains are associated to two independent functions $v$, with disjoint
supports. 
So the Morse index of $u$ is greater than or equal to the number of nodal domains of $u$.

The type of argument in the previous paragraph does not apply to  (\ref{a}) where
$$
g(x,u)=au-f(u)-ch(x)
$$
is not superlinear and
does not vanish at $u=0$, unless $c=0$. Moreover,
using specific examples, in the simplest one dimensional case, one can check that a stable a solution of~(\ref{a}) may have two nodal domains.
Hence, the usual relation between the number of nodal domains and the Morse index does not hold for our equation.

\end{document}